\definecolor{refkey}{rgb}{1,0,0.5}
\definecolor{labelkey}{rgb}{0,0.4,1}
\renewcommand{\@todonotes@drawMarginNoteWithLine}{%
	\begin{tikzpicture}[remember picture, overlay, baseline=-0.75ex]%
	\node [coordinate] (inText) {};%
	\end{tikzpicture}%
	\marginnote[{
		\@todonotes@drawMarginNote%
		\@todonotes@drawLineToLeftMargin%
	}]{
		\@todonotes@drawMarginNote%
		\@todonotes@drawLineToRightMargin%
	}%
}
\numberwithin{equation}{section}
\newtheorem{thm}{Theorem}[section]
\newtheorem{lem}[thm]{Lemma}
\newtheorem{prop}[thm]{Proposition}
\newtheorem{rmk}[thm]{Remark}
\newtheorem{defn}[thm]{Definition}
\newtheorem{app}[thm]{Appendix}
\newcommand{\pl}{\partial}
\newcommand{\be}{\begin{equation}}
\newcommand{\ee}{\end{equation}}
\newcommand{\bee}{\begin{equation*}}
\newcommand{\eee}{\end{equation*}}
\newcommand{\bse}{\begin{subequations}}
\newcommand{\ese}{\end{subequations}}
\newcommand{\bs}{\begin{split}}
\newcommand{\es}{\end{split}}
\begin{document}

\author{Hairong Liu$^{1}$}\thanks{$^{1}$School of Mathematics and Statistics, Nanjing University of Science and Technology, Nanjing, 210094, P.R China
E-mail : hrliu@njust.edu.cn}

\author{Tao Luo$^{2}$}\thanks{$^{2}$Department of Mathematics, City University of Hong Kong, 83 Tat Chee Avenue, Kowloon Tong, Hong Kong.
E-mail: taoluo@cityu.edu.hk}

\author{Hua Zhong$^{3}$}\thanks{$^{3}$School of Mathematics, Southwest Jiaotong University, Chengdu 611756, P.R. China.
E-mail: huazhong@swjtu.edu.cn}

\title[] {Strong solutions to  the 3-D compressible  MHD equations with density-dependent viscosities in exterior domains with far-field vacuum}

\begin{abstract}

This paper investigates the existence and uniqueness of local strong solutions to the three-dimensional compressible magnetohydrodynamic (MHD) equations with density-dependent viscosities in an exterior domain. The system models the dynamics of electrically conducting fluids, such as plasmas, and incorporates the effects of magnetic fields on fluid motion. We focus on the case where the viscosity coefficients are proportional to the fluid density, and the far-field density approaches vacuum. By introducing a reformulation of the problem using new variables to handle the degeneracy near vacuum, we establish the local well-posedness of strong solutions for arbitrarily large initial data, even in the presence of far-field vacuum. Our analysis employs energy estimates, elliptic regularity theory, and a careful treatment of the Navier-slip boundary conditions for the velocity and perfect conductivity conditions for the magnetic field. To the best of our knowledge,   such results are not available even  for the Cauchy problem to the 3-D compressible  MHD equations with degenerate viscosities. \\
 \noindent {\bf Keywords}: 3-D magnetohydrodynamic equations,
Far field vacuum, Density-dependent viscosities, Navier-slip boundary condition, Exterior domain.\\

 \noindent {\bf AMS Subject Classifications.} 35Q30, 76W05
\end{abstract}

\maketitle

\section{Introduction and Main Theorems}
We consider the three-dimensional compressible magnetohydrodynamic (MHD) equations:
\begin{equation}\label{prob1}
	\left\{
	\begin{array}{llll}
		\rho_{t}+\mbox{div}(\rho u)=0,\\[2mm]
		\rho\big(u_{t}+u\cdot\nabla u\big)+\nabla P=\mbox{div}\mathbb{T}+\mbox{curl}H\times H,\\[2mm]
		H_{t}- \eta \Delta H=\mbox{curl}(u\times H),\quad \quad \mbox{div}H=0,
	\end{array}
	\right.
\end{equation}
where $\rho$, $u=(u_1,u_2,u_3)$, $H=(H_1,H_2,H_3)$ denote the density, the velocity and the magnetic field,  respectively.
The constant $\eta>0$ is the magnetic diffusivity
which describes a magnetic diffusion coefficient of the magnetic field.
For polytropic gases, the constitutive relation is given by
\begin{equation}\label{P}
	P=A\rho^{\gamma},\quad A>0,\quad \gamma>1,
\end{equation}
where $A$ is an entropy constant and $\gamma$ is the adiabatic exponent.
$\mathbb{T}$ denotes the viscous stress tensor with the form
\begin{equation}\label{T}
	\mathbb{T}=\mu(\rho)\left(\nabla u+(\nabla u)^T\right)+\lambda(\rho)\mbox{div}u\mathbb{I}_{3}
\end{equation}
where $\mathbb{I}_3$ is the $3\times3$ identity matrix.  The viscosity coefficients $\mu$, $\lambda$ are the functions of $\rho$ and could be expressed as
\begin{equation}\label{T}
	\mu(\rho)=\alpha \rho^{\delta},\quad \lambda(\rho)=\beta\rho^{\delta},
\end{equation}
for some constant $\delta\geq0$, where $\mu(\rho)$ is the shear viscosity coefficient, $\lambda(\rho)+\frac{2}{3}\mu(\rho)$ is the bulk viscosity coefficient, $\alpha$ and $\beta$ are both constants satisfying
\begin{equation}\label{alpha}
	\alpha>0,\quad 2\alpha+3\beta\geq0,
\end{equation}
due to physical realities.  In this work, we focus on the case $\delta=1$ in \eqref{T}. \\

\noindent{ \bf Problem Setting}: \\

Let $U$ be a simply connected bounded smooth domain in $\mathbb{R}^{3}$, and $\Omega\equiv\mathbb{R}^{3}\backslash \bar{U}$ be the exterior domain.  We study the initial-boundary value problem of (\ref{prob1}) in $\Omega$ with:
\begin{itemize}
 \item Initial conditions
\begin{equation}\label{initial}
	(\rho, u,  H)|_{t=0}=(\rho_0(x), u_0(x),   H_0(x))
\end{equation}
with $\rho_0(x)>0$.
\item Navier-slip boundary conditions on $u$:
\begin{equation}\label{boundary2-1}
	u\cdot n=0,\quad 2 \left(S(u)\cdot n\right)_{\tau}=-(\vartheta u)_{\tau},\quad \mbox{on}\quad \partial\Omega,
\end{equation}
where $n$ stands for the outward unit normal to $\partial\Omega$, $S$ is the strain tensor,
\begin{equation*}
	S(u)=\frac{1}{2}\left(\nabla u+(\nabla u)^{T}\right),
\end{equation*}
and for some vector field $v$ on $\partial\Omega$, $v_{\tau}$ stands for the tangential part of $v$,
\begin{equation*}
	v_{\tau}=v-(v\cdot n)n.
\end{equation*}
These conditions,  first introduced by Navier in \cite{Navierslip},  models the situation that there is a stagnant layer of fluid close to the wall allowing a fluid to slip and the slip velocity is proportional to the shear stress. Here $\vartheta$ is the scalar friction function to measure the tendency of the fluid to slip on the boundary. The boundary condition (\ref{boundary2-1}) is equivalent to the following conditions in the sense of  the distribution
\begin{equation}\label{boundary2-2}
	u\cdot n=0,\quad \mbox{curl}u\times n= \left(2S(n)-\vartheta \mathbb{I}_3 \right)u=:-K(x)u,\quad \mbox{on}\quad \partial\Omega,
\end{equation}
where $K(x)$ is a $3\times3$ symmetric matrix.

\item Perfect conductivity conditions for $H$:
\begin{equation}\label{boundary3}
	\quad H\cdot n=0,\quad \mbox{curl}H\times n=0, \quad \mbox{on} \quad \partial\Omega.
\end{equation}
\item Far field behavior:
\begin{equation}\label{1.8}
	(\rho,u,H)(x,t)\rightarrow(0,0,0),\quad\text{as}\quad |x|\rightarrow\infty.
\end{equation}
Such kinds of far field behavior occurs naturally under some physical
assumptions on (\ref{prob1})'s solutions, such as finite total mass and total energy.

\end{itemize}
Moreover, we assume that the initial data satisfy  the compatibility condition  with the boundary condition.\\

\noindent {\bf Background}: \\

When $H = 0$, the MHD system reduces to compressible
Navier-Stokes equations.
For the constant viscous
fluid, there is a lot of literature on the well-posedness of classical solutions to isentropic compressible
Navier-Stokes equations.
When $\inf_{x}\rho_0>0$, it is well-known that the local existence of classical solutions for (\ref{prob1})-(\ref{1.8}) can be obtained by a standard Banach fixed point argument, see  Nash \cite{nash}.  And this method has been extended to be a global one by Matsumura-Nishida \cite{Matsumura1980} for initial data close to a non-vacuum equilibrium in some Sobolev space $H^s$ $(s > \frac{5}{2})$. However, when the density function connects to vacuum continuously, this approach is invalid because of $\inf_{x}\rho_0=0$, which occurs when some physical requirements are imposed, such as finite total initial mass, finite total initial energy, or vacuum appearing locally in some open sets.  Moreover, the degeneracy caused by the initial vacuum will cause some difficulties in the regularity estimate due to the less regularizing effect of the viscosity  on solutions.  When $\inf\rho_0(x)=0$, the local-in-time
well-posedness of strong solutions with vacuum was firstly solved by Cho-Choe-Kim  \cite{Cho2004} and Cho-Kim  \cite{Cho2006}
in $\mathbb{R}^3$, where they introduced an initial compatibility condition to compensate the lack of a positive lower
bound of density.
Later, Huang-Li-Xin \cite{HLX2012} extended the local existence to a global one
with smooth initial data that are of small energy but possibly
large oscillations in $\mathbb{R}^3$. Recently,  Cai-Li\cite{CL2021} established  the global existence in 3D bounded domains, and in exterior domain with Navier-slip boundary by  Cai-Li-Lv\cite{CLL2021}. Jiu-Li-Ye \cite{JLY2014} proved the global existence of classical solution with arbitrarily
large data and vacuum in $\mathbb{R}$.

When viscosity coefficients are density-dependent, the Navier-Stokes system has been received extensive
attentions in recent years, especially for the case with vacuum, where the well-posedness of solutions become
more challenging due to the degenerate viscosity.
Noted that for density-dependent viscosities this is $\delta>0$ in \eqref{T}, the approach of the constant viscosity case in \cite{Cho2006,Cho2004} doesn't work any more,  and the strong degeneracy of the momentum equations of Navier-Stokes equations near the vacuum creates serious difficulties for the well-posedness of both strong and weak solutions.
A remarkable discovery of a new mathematical entropy function has been found by
Bresch-Desjardins \cite{BD2003} for the viscosity satisfying some mathematical relation, which provides
additional regularity on some derivative of the density. This observation was applied widely
in proving the global existence of weak solutions with vacuum for Navier-Stokes equations
and some related models (see \cite{BVY2022},\cite{JX2008}, \cite{VY2016} and so on). For classical solutions,  when $\delta=1$, Li-Pan-Zhu \cite{LPZ20172D} obtained the local existence of 2-D classical solution with far field vacuum, which also applies to the 2-D shallow water equations.
When $1<\delta\leq\min{\{3,\frac{\gamma+1}{2}\}}$, Li-Pan-Zhu\cite{LPZ2019} established the
local existence of 3-D classical solutions with arbitrarily large data and vacuum.  Recently, Xin-Zhu\cite{XZ2021advance} have proved the global well-posedness of regular solutions with vacuum for a class of smooth initial data that
are of small density but possibly large velocities. This result is the first one on the global existence of smooth solution which have large velocities and contain vacuum state for such degenerate system in three space dimensions.
 When $0<\delta<1$,
Xin-Zhu \cite{XZ2021jmpa} identify a class of initial data admitting a local regular solution with far field vacuum and finite energy in some inhomogeneous Sobolev spaces by introducing some new variables and initial compatibility conditions for the Cauchy problem in 3-D space.  Cao-Li-Zhu \cite{CLZ2022} proved the global existence
of 1-D classical solution with large initial data. Some other interesting results and discussions can also be
found in \cite{CLZ2024}, \cite{DXZ2023},  \cite{GLZ2019}, \cite{GL2016}, \cite{LXZ2016}, \cite{YZ2002-1}, \cite{YZ2002} and the references therein.

 The system of  compressible  MHD equations have been also studied extensively by physicists and mathematicians because of its physical importance, rich and complex phenomena and mathematical challenges. In particular, the local and global well-posedness
 of solutions have been widely concerned.
When viscosity coefficients $\mu$ and $\lambda$ are both constants, we briefly review the some interesting results, for which one may refer to \cite{Kawashima1982} for one-dimensional case and\cite{CHS2023,CHS2024,Fan2009,  HW2010,HH2017,Kawashima1984,LY2011,LLZ2022,LXZ2013,PG2013,TG2017,ZZ2010, Zhu2015siam}  for higher dimensions. The local existence of strong solutions  for the Cauchy problem was obtained by
Fan-Yu \cite{Fan2009} with the initial density containing vacuum.
Tang-Gao \cite{TG2017} obtained the local strong solutions to the compressible MHD equations in a 3-D bounded domain with the Navier-slip condition.
For global existence, Kawashima \cite{Kawashima1984} firstly established the global smooth solutions to
the general electro-magneto-fluid equations in two dimensions with non-vacuum.
Hu-Wang \cite{HW2010} considered the initial-boundary
value problem in a bounded domain with large data and proved the existence and large-time behavior of
global weak solutions for $\gamma>\frac{3}{2}$.
Recently, Li-Xu-Zhang \cite{LXZ2013} confirmed the global well-posedness
of classical solution of the Cauchy problem for
initial data with small energy but possibly large oscillations,  where the density is allowed to contain vacuum states. Hong et al. \cite{HH2017} generalized the result for large initial data when $\gamma-1$ and $\eta^{-1}$ are suitably small.
The initial-boundary value problem with small energy can be found in recently work \cite{CHS2023} for bounded domain, and \cite{CHS2024} for  exterior domain.

For the case of density-dependent viscosity coefficients (i.e., $ \delta>0$ in \eqref{T}), the problem is much more challenging due to the degeneration near the vacuum and hence the obtained results are limited.
Huang-Shi-Sun\cite{HSS2019} studied  the equations of a planar magnetohydrodynamic compressible
flow with the viscosity depending on the specific volume of the gas and the
heat conductivity proportional to a positive power of the temperature, and  obtained the global
existence  of strong solutions.
However, there are no works about the local
existence of the strong  solution to the Cauchy problem or  initial-boundary value problem for the 3-D compressible MHD equations with degenerate viscosity coefficients and vacuum.
\\
\vspace{2mm}

\noindent{\bf Key Difficulties and Strategies}:\\

In this paper, we focus on the local strong solution for 3-D MHD equations \eqref{prob1} with density-dependent viscosities ( $\delta=1$ in \eqref{T}) in  exterior domains  with Navier-slip boundary conditions. 	
The analysis of the degeneracies in momentum equations and the boundary conditions require some special attentions. The major difficulties include:

$(1)$  The degeneracy of time evolution in momentum equations. For the case $\delta=1$, if  $\rho>0$,  the momentum equation $\eqref{prob1}_2$ can be formally rewritten as
\begin{equation}{\label{u1}}
	u_{t}+u\cdot\nabla u-(\alpha\Delta u+(\alpha+\beta)\nabla\mbox{div}u)+\frac{A\gamma}{\gamma-1}\nabla\rho^{\gamma-1}
	=\frac{\nabla\rho}{\rho}\cdot Q(u)+\frac{1}{\rho}\mbox{curl}H\times H,
\end{equation}
where
\begin{equation}
	Q(u)=\alpha \left(\nabla u+(\nabla u)^T\right)+\beta\mbox{div}u\mathbb{I}_{3}.
\end{equation}
 Inspired by \cite{LPZ20172D} for 2-D shallow water equation, the degeneracies of the time evolution and the viscosity can be transferred to the possible singularity of the special source term $\frac{\nabla\rho}{\rho}\cdot Q(u)$ if the  vacuum only appears at the far field.  And pointed out the possible singularity of the quantities  $\frac{\nabla\rho}{\rho}$ could be well-defined in $D^1\cap D^2$.  Hence for our problem,  one could transfer the degeneration shown in system (\ref{prob1}) caused by the far field
vacuum to the possible singularity of the quantities  $\psi=:\frac{\nabla\rho}{\rho}$ and $J=:\frac{H}{\rho}$. Then provided the special source terms $\frac{\nabla\rho}{\rho}\cdot Q(u)$, and $J\cdot\nabla H-J\cdot(\nabla H)^{T}$ could be dominated well, the velocity $u$ of the fluid can be governed by a strong parabolic system. Besides, $\nabla\rho^{\gamma-1}$, $\gamma>1$ can be rewritten as
\begin{equation}
\nabla\rho^{\gamma-1}=\rho^{\frac{\gamma-1}{2}}\nabla\rho^{\frac{\gamma-1}{2}}.\nonumber
\end{equation}
Because the term $\rho^{\frac{\gamma-1}{2}}$ will play an important role  in analyzing the regularity of $u$,  we introduce the new variable $\phi=:\rho^{\frac{\gamma-1}{2}}$ as in \cite{LPZ20172D}, while $c(x,t)=\sqrt{A\gamma}\rho^{\frac{\gamma-1}{2}}$ is actually the local sound speed.

$(2)$  The boundary conditions for unknown variables. For  the boundary conditions of Navier-slip for the velocity field and the perfect conduction of magnetic filed, it is more suitable to use the $L^2$-norms of $\mbox{div}$ and $\mbox{curl}$ to control that of the derivatives of the velocity and magnetic fields. We should point out that the slip boundary conditions (\ref{boundary2-1}) for $u$ is inhomogeneous, it results in that the boundary terms are not vanished, which should be managed carefully  with trace theorem. Moreover,
a parabolic equation $(\ref{J})_4$ for $J$ with corresponding boundary
\begin{equation}
J\cdot n|_{\partial\Omega}=0,\quad \mbox{curl}J\times n|_{\partial\Omega}=-(\psi\times J)\times n|_{\partial\Omega}=-(\psi\cdot n)J|_{\partial\Omega}\nonumber
\end{equation}
have been found, see Section 3 for   details.  Although they are similar to Navier-slip boundary conditions, but the right-hand side of second condition depends on both $J$ and $\psi$. Noted that such boundary conditions are challenges to deal with,  see (\ref{J1}) and (\ref{3t}) for example. Meanwhile, in order to get estimate $\|\nabla^2 J\|_1$, the elliptic estimate for $J$ has been  established in Appendix \ref{prop-J}.\\

{\bf Main Results} \\

Throughout this paper, we take the following simplified notations for the standard inhomogeneous and homogeneous Sobolev space:
\begin{align*}
	& \|f\|=\|f\|_{L^{2}(\Omega)};\quad 	\|f\|_s=\|f\|_{H^s(\Omega)};\quad D^1=\{f\in L^6(\Omega):\,\,|f|_{D^1}=|\nabla f|_{L^2}<+\infty\};\\
	& D^{k,r}=\{f\in L_{loc}^1(\Omega):\,\,|f|_{D^{k,r}}=|\nabla^kf|_{L^r}<+\infty\},\quad D^k=D^{k,2} \, \,\text{for}\,\,  k\geq 2.
\end{align*}
One can get a detailed and precise study of  homogeneous Sobolev space in \cite{Galdi1994book}.

Based on the above observations, and motivated by \cite{LPZ20172D}, the definition for regular solution to initial-boundary value problem is given as following.

\begin{defn}
(Regular solutions to initial-boundary value problem) Let $T>0$ be a finite constant. $(\rho,u,H)$ is called a regular solution to initial-boundary value problem (\ref{prob1})-(\ref{1.8}) in $\Omega\times[0,T]$ if $(\rho,u,H)$  satisfies\\[2mm]
$(1)$  $(\rho,u,H)$ satisfies the initial-boundary value problem (\ref{prob1})-(\ref{1.8}) a.e. in $(x,t)\in\Omega\times[0,T]$;\\[2mm]
$(2)$  $\rho\geq0$, $\rho^{\frac{\gamma-1}{2}}\in C([0,T_{*}];H^3), \quad (\rho^{\frac{\gamma-1}{2}})_t\in C([0,T_{*}];H^2)$;\\[2mm]
$(3)$  $u\in C([0,T];H^3)\cap L^2([0,T];D^4)$, $u_t\in C([0,T];H^1)\cap L^2([0,T];D^2)$;\\[2mm]
$(4)$ $H\in C([0,T];H^3)\cap L^2([0,T];D^4)$, $H_t\in C([0,T];H^1)\cap L^2([0,T];D^2)$;\\[2mm]
$(5)$   $u_{t}+u\cdot\nabla u-(\alpha\Delta u+(\alpha+\beta)\nabla\mbox{div}u)
=\frac{\nabla\rho}{\rho}\cdot Q(u)+\frac{1}{\rho}\mbox{curl}H\times H$ holds when $\rho(x,t)=0$.
\end{defn}

Now we are ready to state our main results.
\begin{thm}\label{thm1}
If the initial data $(\rho_0,u_0,H_0)$ satisfy the conditions:
\begin{align}\label{initial data}
	\phi_0\equiv\rho_0^{\frac{\gamma-1}{2}}>0,\quad  (\phi_0,u_0,H_0)\in H^{3},\quad \psi_0=\frac{\nabla\rho_0}{\rho_0}
	\in  D^1\cap D^2,\quad \frac{H_0}{\rho_0}\in H^{2}.
\end{align}
Moreover, we assume that the initial data satisfy  the compatibility condition  with the boundary conditions.
Then there exists a time
$T_{*}>0$  and a unique local regular solution $(\rho,u,H)$ to the initial-boundary value problem (\ref{prob1})-(\ref{1.8}). Moreover, if $1<\gamma\leq \frac{5}{3}$ or $\gamma=2, 3$  then $\rho\in C([0,T_{*}];H^3)$.
\end{thm}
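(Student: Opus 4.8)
The plan is to implement the reformulation sketched in the introduction rather than to attack the degenerate system \eqref{prob1} directly. I would work with the quadruple $(\phi,\psi,u,J)$, where $\phi=\rho^{\frac{\gamma-1}{2}}$, $\psi=\frac{\nabla\rho}{\rho}$ and $J=\frac{H}{\rho}$. From the continuity equation one derives the transport equations
\[
\phi_t+u\cdot\nabla\phi+\tfrac{\gamma-1}{2}\phi\,\mbox{div}\,u=0,\qquad
\psi_t+u\cdot\nabla\psi+(\nabla u)^{T}\psi+\nabla\mbox{div}\,u=0,
\]
while the momentum equation \eqref{u1} becomes a uniformly parabolic system for $u$ whose degeneracy has been transferred to the source $\psi\cdot Q(u)+\mbox{curl}H\times J$, and the induction equation yields a parabolic system for $J$ carrying the boundary conditions $J\cdot n=0$, $\mbox{curl}J\times n=-(\psi\cdot n)J$ on $\partial\Omega$. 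Since $(\phi_0,\psi_0,J_0)$ all lie in the spaces prescribed by \eqref{initial data}, the reformulated system has nondegenerate principal parts. I would then set up an iteration: given $u^{k}$, first solve the linear transport problems for $\phi^{k+1},\psi^{k+1}$, and then the two linear parabolic boundary-value problems for $u^{k+1}$ (Navier-slip) and $J^{k+1}$ (the $\psi$-dependent conditions above), each solvable by classical parabolic theory.

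The heart of the argument is to produce a priori bounds uniform in $k$ on a time interval $[0,T_*]$ independent of $k$, in the norms dictated by the definition of regular solution ($\phi\in H^3$, $\psi\in D^1\cap D^2$, $u,J\in H^3\cap L^2D^4$, with the corresponding control of time derivatives). For $\phi$ and $\psi$ I would differentiate the transport equations to the required order and use commutator and Moser-type estimates, keeping in mind that $\psi$ lives in a homogeneous space, so only its derivatives are controlled. For $u$ and $J$ I would replace full-gradient control by the div-curl system $\|\nabla w\|^2\lesssim\|\mbox{div}\,w\|^2+\|\mbox{curl}\,w\|^2+(\text{boundary})$ together with elliptic regularity, and aim to close the top-order norms $\|u\|_{D^4}$, $\|\nabla^2\psi\|$ and $\|\nabla^2 J\|$ simultaneously.

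Two boundary features require special care. The Navier-slip condition \eqref{boundary2-2} is inhomogeneous, so integration by parts leaves boundary integrals involving $K(x)u$, which I would absorb using the trace theorem and interpolation at a small loss that Gronwall tolerates. The $J$-equation's boundary term $-(\psi\cdot n)J$ couples $J$ and $\psi$; to recover $\|\nabla^2 J\|$ I would invoke the dedicated elliptic estimate of Appendix \ref{prop-J}, which is tailored to precisely this datum. Combining these with the nonlinear estimates for the sources $\psi\cdot Q(u)$ and $\mbox{curl}H\times J$ should yield a closed differential inequality of the form $\frac{d}{dt}\mathcal{E}(t)\le C\mathcal{E}(t)^{m}$, giving uniform bounds on a short $[0,T_*]$.

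Finally, I would show that $(\phi^{k},\psi^{k},u^{k},J^{k})$ is Cauchy in a lower-order norm by estimating the differences, pass to the limit using lower semicontinuity to obtain a solution with the stated regularity, and check the boundary and far-field conditions; uniqueness follows from the same difference estimate applied to two solutions. The density is recovered as $\rho=\phi^{\frac{2}{\gamma-1}}$, which defines a $C([0,T_*];H^3)$-valued function exactly when the exponent $\frac{2}{\gamma-1}$ is either $\ge 3$ (that is, $1<\gamma\le\frac{5}{3}$) or a nonnegative integer ($\gamma=2,3$), since composition with $t\mapsto t^{\frac{2}{\gamma-1}}$ then preserves $H^3$; this explains the restriction in the last assertion. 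The main obstacle I anticipate is closing the top-order estimates in the presence of the singular source $\psi\cdot Q(u)$ and the $\psi$-dependent, inhomogeneous boundary conditions simultaneously: the boundary integrals produced by Navier-slip and by $\mbox{curl}J\times n=-(\psi\cdot n)J$ do not vanish, and bounding them without losing a derivative — where the trace theorem and the Appendix \ref{prop-J} elliptic estimate are essential — is the delicate point.
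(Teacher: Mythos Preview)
Your overall strategy matches the paper's: reformulate in $(\phi,\psi,J)$, iterate on a linearized system with $v=u^k$, close uniform bounds via a div--curl decomposition together with the Appendix~\ref{prop-J} elliptic estimate for $J$, prove contraction in a lower norm, and recover $\rho=\phi^{2/(\gamma-1)}$. You have also correctly identified the two delicate boundary issues and the reason for the $\gamma$-restriction in the last clause.

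The genuine gap is that your iteration carries only ``the two linear parabolic boundary-value problems for $u^{k+1}$ and $J^{k+1}$'': the magnetic field $H$ must be kept as a \emph{separate} unknown. The momentum source $\mbox{curl}H\times J=J\cdot\nabla H-J\cdot(\nabla H)^T$ needs $\nabla H$ up to $H^2$ in order to close $\|u\|_{D^4}$. If you try instead to substitute $H=\rho J=\phi^{2/(\gamma-1)}J$, the factor $\phi^{2/(\gamma-1)}$ enters the source explicitly, and for generic $\gamma>1$ this composition does not inherit enough Sobolev regularity from $\phi\in H^3$; that obstruction is precisely what forces the $\gamma$-restriction in the theorem's last sentence, so you cannot appeal to it inside the existence proof itself. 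The paper therefore lets $H^{k+1}$ solve its own linear parabolic equation with the homogeneous perfect-conductor conditions \eqref{boundary3}; this is the cleanest of all the sub-problems, yields $H\in C([0,T_*];H^3)\cap L^2H^4$ directly, and its estimates feed into $\|J\cdot\nabla H\|_{H^2}$. Only afterwards (Subsection~\ref{3.6}) is the consistency $J\equiv H/\rho$ recovered by showing that $J-H/\rho$ satisfies a homogeneous parabolic problem with zero data. A related overstatement: you claim $J\in H^3\cap L^2D^4$, but with only $\psi\in D^1\cap D^2$ the terms $J\,\mbox{div}\,\psi$, $J\psi^2$, $\psi\cdot\nabla J$ in the $J$-equation limit $J$ to $L^\infty H^2\cap L^2H^3$, which is exactly what the paper obtains and what suffices once $H$ is carried separately.
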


\begin{rmk} Theorem \ref{thm1} shows that
	$$\rho^{\frac{\gamma-1}{2}}\in C([0,T_{*}];H^3), \quad \frac{\nabla\rho}{\rho}\in  D^1\cap D^2,$$
it suggests that $\frac{\nabla\rho}{\rho}\in L^\infty$. Hence the vacuum only occurs in the far field.
\end{rmk}

\begin{rmk}
We remark that the initial data \eqref{initial data} contains a large class of functions, for example,
\begin{align}
	\rho_{0}(x)=\frac{1}{1+|x|^{2\sigma}},\quad H_0(x)=\left(\frac{1}{1+|x|^{2\delta_1}},\frac{1}{1+|x|^{2\delta_2}},\frac{1}{1+|x|^{2\delta_3}}\right),\nonumber
\end{align}
where $\sigma>\frac{3}{2(\gamma-1)}$ and $\delta_{i}>\sigma+\frac{3}{4}$, $i=1,2,3$.  The auxiliary function $J$ becomes well-defined provided $H_0$ decays faster than $\rho_0$ to some extent.
\end{rmk}

\begin{rmk} In Theorem \ref{thm1}, the coefficients of viscosity are given as
	\begin{align}\label{vis}
		\mu(\rho)=\alpha \rho,\quad\lambda(\rho)=\beta\rho.
	\end{align}
	In particular, the results obtained in Theorem \ref{thm1} also apply to the local existence of strong solutions for the Cauchy problem to the compressible  MHD equations with degenerate viscosities as \eqref{vis} in $\mathbb{R}^{3}$ which extend the results for  Cauchy problem to compressible Navier-Stokes equations in \cite{LPZ20172D}  to that of the compressible MHD equations in $\mathbb{R}^{3}$.
	
	\end{rmk}
	
\noindent {\bf Structure of the Paper:} \\
\begin{itemize}
\item Section 2 presents preliminary lemmas, including Sobolev inequalities and elliptic estimates.
\item In Section 3 , we first reformulate  the problem in terms of new variables $\phi$, $\psi$ and $J$,  (\ref{J})-(\ref{initial-2}).  Then we establish the a priori estimates on the solutions of the  linearized problem allowing vacuum in the far field. Finally, the existence of local strong solution to the reformulated nonlinear problem (\ref{J})-(\ref{initial-2}) is established by using the iteration with the help of  existence results of the linearized problem  with vacuum.
\item The appendix provides technical elliptic estimates for the auxiliary variable  $J$ with the specific boundary conditions.
 \end{itemize}

Our results not only advance the theoretical understanding of degenerate MHD systems but also lay the groundwork for future studies on global solutions and stability.

\section{Preliminaries}
In this section, we list some basic lemmas that will be used later. The first one is the well-known Gagliardo-Nirenberg inequalities.

\begin{lem}\label{lem-2-1}
Let $\Omega\subset\mathbb{R}^{3}$ be the exterior of a bounded domain with  smooth boundary. Then
for $p\in[2,6]$, $q\in(1,\infty)$, and $r\in(3,\infty)$, there exists some generic constant $C>0$ 
such that for
$f\in H^{1}(\Omega)$, and $g\in L^{q}(\Omega)\cap D^{1,r}(\Omega)$,
it holds that
\begin{align}
\|f\|^p_{L^{p}}\leq C\|f\|^{\frac{6-p}{2}}_{L^{2}}\|\nabla f\|^{\frac{3p-6}{2}}_{L^{2}},\quad
\|g\|_{L^{\infty}}\leq C\|g\|^{\frac{q(r-3)}{3r+q(r-3)}}_{L^{q}}\|\nabla g\|^{\frac{3r}{3r+q(r-3)}}_{L^{r}}.\nonumber
\end{align}
Some special cases of these  inequalities are
\begin{equation}\label{ineq1}
\|u\|_{L^{6}}\leq C\|\nabla u\|_{L^{2}},\quad \|u\|_{L^{\infty}}\leq C\|\nabla u\|_{L^2}^{\frac{1}{2}}\|\nabla^2u\|_{L^2}^{\frac{1}{2}}.\nonumber
\end{equation}
\end{lem}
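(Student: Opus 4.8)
The plan is to treat the two inequalities separately, in each case reducing the estimate on the exterior domain $\Omega$ to a scale-invariant Gagliardo--Nirenberg inequality whose exponents are forced by dimensional analysis. Testing the second inequality against the rescaling $g_\lambda(x)=g(\lambda x)$ on $\mathbb{R}^3$ gives $\|g_\lambda\|_{L^\infty}=\|g\|_{L^\infty}$, $\|g_\lambda\|_{L^q}=\lambda^{-3/q}\|g\|_{L^q}$ and $\|\nabla g_\lambda\|_{L^r}=\lambda^{1-3/r}\|\nabla g\|_{L^r}$; requiring invariance for all $\lambda>0$ forces the interpolation exponent $\theta=\tfrac{3r}{3r+q(r-3)}$, which is exactly the power of $\|\nabla g\|_{L^r}$ in the statement, and an identical check pins down the exponents of the first inequality. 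Since $\Omega$ is unbounded, the natural functional framework is that of the homogeneous Sobolev spaces $D^{k,r}(\Omega)$, whose embedding theory on exterior domains with smooth compact boundary is developed in \cite{Galdi1994book}; this is what legitimizes the homogeneous (product) form of the inequalities, constants no longer belonging to the spaces because $|\Omega|=\infty$.

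For the first inequality I would first establish the endpoint case $p=6$, namely the Sobolev embedding $\|f\|_{L^6(\Omega)}\le C\|\nabla f\|_{L^2(\Omega)}$ for $f\in D^{1,2}(\Omega)$ on an exterior domain, which is the $D^{1,2}\hookrightarrow L^6$ embedding of \cite{Galdi1994book}; the case $p=2$ is trivial. For intermediate $p\in(2,6)$ I would interpolate by the log-convexity of the $L^p$-norms: writing $\tfrac1p=\tfrac{1-\theta}{2}+\tfrac{\theta}{6}$ gives $\theta=\tfrac{3(p-2)}{2p}$ and $1-\theta=\tfrac{6-p}{2p}$, whence
\[
\|f\|_{L^p}\le \|f\|_{L^2}^{\frac{6-p}{2p}}\|f\|_{L^6}^{\frac{3(p-2)}{2p}}\le C\,\|f\|_{L^2}^{\frac{6-p}{2p}}\|\nabla f\|_{L^2}^{\frac{3(p-2)}{2p}}.
\]
Raising to the $p$-th power yields $\|f\|_{L^p}^p\le C\|f\|_{L^2}^{(6-p)/2}\|\nabla f\|_{L^2}^{(3p-6)/2}$, as claimed. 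The special cases recorded after the lemma are the endpoint $p=6$ of this inequality and a companion $L^\infty$-interpolation estimate obtained by the same scaling-based argument applied to the derivatives of $u$.

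For the second inequality the mechanism is extension to the whole space followed by the classical Gagliardo--Nirenberg inequality on $\mathbb{R}^3$. Because $\partial\Omega$ is compact and smooth, there is a bounded extension operator $E$ on the homogeneous spaces with $Eg|_\Omega=g$, $\|Eg\|_{L^q(\mathbb{R}^3)}\le C\|g\|_{L^q(\Omega)}$ and $\|\nabla Eg\|_{L^r(\mathbb{R}^3)}\le C\|\nabla g\|_{L^r(\Omega)}$. Since $r>3$, Morrey's embedding gives $W^{1,r}(\mathbb{R}^3)\hookrightarrow L^\infty$, and interpolating it against $L^q$ with the scaling-determined exponent $\theta$ yields $\|Eg\|_{L^\infty(\mathbb{R}^3)}\le C\|Eg\|_{L^q(\mathbb{R}^3)}^{1-\theta}\|\nabla Eg\|_{L^r(\mathbb{R}^3)}^{\theta}$; restricting to $\Omega$ and invoking the mapping bounds of $E$ gives the stated estimate. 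The main obstacle, and the point demanding genuine care, is precisely the construction and boundedness of $E$ as a map on the homogeneous (seminorm) spaces: a naive reflection-plus-cutoff extension produces a gradient term $(\nabla\chi)\,g$ supported in a bounded collar of $\partial\Omega$, which must be reabsorbed without destroying the separate, seminorm-by-seminorm control that the scale-invariant product form requires. This is exactly where the compactness and smoothness of $\partial\Omega$, together with the decay of $g$ at infinity encoded in $L^q(\Omega)\cap D^{1,r}(\Omega)$, enter, and one appeals to the exterior-domain embedding and extension results of \cite{Galdi1994book} to close the argument.
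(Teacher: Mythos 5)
The paper offers no proof of this lemma at all: the inequalities are recorded as ``the well-known Gagliardo--Nirenberg inequalities,'' with \cite{Galdi1994book} cited only for background on the homogeneous spaces $D^{k,r}$, so your proposal is supplying an argument the authors chose to omit rather than competing with one in the text. Your outline is essentially correct and is the standard way to make such a citation honest: the scaling computation pins the exponents correctly (your arithmetic $\theta=\tfrac{3(p-2)}{2p}$, $1-\theta=\tfrac{6-p}{2p}$ and $\theta=\tfrac{3r}{3r+q(r-3)}$ checks out), and the first inequality does reduce to the endpoint embedding plus log-convexity of $L^p$ norms. Two steps deserve tightening. First, on an exterior domain the $D^{1,2}\hookrightarrow L^6$ embedding of \cite{Galdi1994book} a priori yields only $\|f-c\|_{L^6(\Omega)}\leq C\|\nabla f\|_{L^2(\Omega)}$ for some constant $c$; you should observe that $f\in L^2(\Omega)$ forces $c=0$, because a nonzero constant does not belong to $L^2+L^6$ on a domain of infinite measure. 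Second, the extension operator with the \emph{separate} seminorm bound $\|\nabla Eg\|_{L^r(\mathbb{R}^3)}\leq C\|\nabla g\|_{L^r(\Omega)}$ --- the obstacle you correctly flag but leave to citation --- does not come from a naive cutoff, but it can be constructed explicitly by a mean correction: fix a bounded connected collar $\mathcal{C}$ of $\partial\Omega$, set $\bar{g}=\frac{1}{|\mathcal{C}|}\int_{\mathcal{C}}g$, extend $g-\bar{g}$ by a Stein extension localized near $\partial\Omega$ and add back the constant $\bar{g}$ on the obstacle; Poincar\'e--Wirtinger on $\mathcal{C}$ gives $\|g-\bar{g}\|_{W^{1,r}(\mathcal{C})}\leq C\|\nabla g\|_{L^r(\mathcal{C})}$ (note $r>3$ makes $g-\bar g$ H\"older continuous on $\overline{\mathcal{C}}$, so membership in $W^{1,r}(\mathcal C)$ is not an issue even when $q<r$), while $|\bar{g}|\leq C\|g\|_{L^q(\mathcal{C})}$ preserves the $L^q$ bound, so the whole-space Gagliardo--Nirenberg inequality then restricts to $\Omega$ in the pure product form. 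Finally, for the second special case it is cleaner to say precisely how it follows: take $q=r=6$ in the second inequality, giving $\theta=\tfrac12$ and $\|u\|_{L^\infty}\leq C\|u\|_{L^6}^{1/2}\|\nabla u\|_{L^6}^{1/2}$, then apply the $L^6$ embedding to $u$ and to each component of $\nabla u$ (the latter again needs no boundary condition, by the same $c=0$ argument). With these repairs your proof is complete and self-contained, which is more than the paper provides.
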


The second lemma presents  some compactness results obtained via the Aubin-Lions Lemma.
\begin{lem}\label{lem-AL}\cite{A-L}
Let $X_0\subset X\subset X_1$ be three Banach spaces. Suppose that $X_0$ is compactly embedded in $X$, and $X$ is continuously embedded in $X_1$. Then the following statements hold:
\\ $(i)$\  If $F$ is bounded in $L^p([0,T]; X_0)$ for $1\leq p<\infty$, and $F_t$ is bounded in $L^1([0,T]; X)$, then $F$ is relatively compact in $L^p([0,T];X)$;\\
$(ii)$\ If $F$ is bounded in $L^{\infty}([0,T|;X_0)$ and $F_t$ is bounded in $L^{p}([0,T];X_1)$ for $p>1$,
then $F$ is relatively compact in $C([0,T];X)$.
\end{lem}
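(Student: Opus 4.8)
The plan is to reduce both parts to the weak space $X_1$ by means of \emph{Ehrling's interpolation inequality} and then invoke the appropriate Banach-valued compactness criterion, using the bounds on $F_t$ solely to control the time variable. First I would record the interpolation inequality: since $X_0\hookrightarrow\hookrightarrow X$ is compact and $X\hookrightarrow X_1$ is continuous, for every $\epsilon>0$ there exists $C_\epsilon>0$ with
\[
\|v\|_{X}\le \epsilon\|v\|_{X_0}+C_\epsilon\|v\|_{X_1}\qquad\text{for all } v\in X_0.
\]
This is proved by contradiction: a normalized sequence with $\|v_n\|_{X_0}=1$ violating it forces $\|v_n\|_{X_1}\to 0$, while the compact embedding extracts a subsequence converging in $X$ to a limit that must vanish in $X_1$, hence be $0$, contradicting $\|v_n\|_{X}\ge\epsilon$.

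For part $(i)$, applying this inequality pointwise in $t$ to a difference $f_n-f_m$ and taking $L^p$-norms in time yields
\[
\|f_n-f_m\|_{L^p(0,T;X)}\le 2\epsilon\sup_{f\in F}\|f\|_{L^p(0,T;X_0)}+C_\epsilon\|f_n-f_m\|_{L^p(0,T;X_1)}.
\]
Hence, once a subsequence is Cauchy in $L^p(0,T;X_1)$, letting $\epsilon\to0$ shows it is Cauchy in $L^p(0,T;X)$, so it suffices to prove $F$ is relatively compact in $L^p(0,T;X_1)$. For that I would apply the vector-valued Riesz--Fréchet--Kolmogorov criterion, whose two hypotheses I can verify: the time-averages $\int_{t_1}^{t_2}f\,ds$ are bounded in $X_0$ by Hölder's inequality, hence relatively compact in $X_1$ by the compact embedding $X_0\hookrightarrow\hookrightarrow X_1$; and the time-translations are uniformly small.

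The main obstacle of part $(i)$ is precisely the passage from the $L^1$-in-time bound on $F_t$ to the \emph{$L^p$-in-time} smallness of the translations $a_h(t):=\|f(t+h)-f(t)\|_{X_1}$. Writing $g(s)=\|f_t(s)\|_{X_1}$ and $G_h(t)=\int_t^{t+h}g\,ds$, the elementary bound $a_h(t)\le G_h(t)$ with Fubini gives $\|G_h\|_{L^1}\le h\|g\|_{L^1}$, while $\|G_h\|_{L^\infty}\le\|g\|_{L^1}$ is uniform; interpolation then yields $\|G_h\|_{L^p}^p\le\|G_h\|_{L^\infty}^{p-1}\|G_h\|_{L^1}\to0$. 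Splitting $\int a_h^p\le\int G_h\,b_h^{p-1}$ with $b_h(t):=\|f(t+h)\|_{X_1}+\|f(t)\|_{X_1}$ and applying Hölder against the uniform $L^p(0,T;X_1)$-bound on $b_h$ (inherited from the $L^p(X_0)$-bound) delivers $\sup_{f\in F}\|a_h\|_{L^p(0,T-h;X_1)}\to0$. This is exactly where finiteness of $p$ is essential.

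For part $(ii)$ the same Ehrling reduction, now with the $\sup_t$ norm, reduces matters to relative compactness in $C([0,T];X_1)$, which I would obtain from a vector-valued Arzelà--Ascoli theorem. The bound $F_t\in L^r(0,T;X_1)$ with $r>1$ gives, via Hölder, the uniform estimate $\|f(t)-f(s)\|_{X_1}\le C|t-s|^{1/r'}$, hence equicontinuity of $F$ in $C([0,T];X_1)$; pointwise relative compactness of $\{f(t):f\in F\}$ in $X_1$ follows, for a.e.\ $s$, from the $L^\infty(0,T;X_0)$-bound together with $X_0\hookrightarrow\hookrightarrow X_1$, and is propagated to every $t$ through the equicontinuity by a short triangle-inequality argument. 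The decisive point here is that $r>1$ (rather than $r=1$) is what upgrades the bare continuity $W^{1,1}(0,T;X_1)\hookrightarrow C([0,T];X_1)$ to genuine equicontinuity; without it the Arzelà--Ascoli step, and hence the $C([0,T];X)$ conclusion, would break down.
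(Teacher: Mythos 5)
The paper contains no proof of this lemma to compare against: it is quoted verbatim from Simon \cite{A-L} as a preliminary fact. Your proposal is correct, and it is in substance Simon's original argument: Ehrling's interpolation inequality to reduce compactness from $X$ to the weak space $X_1$, the vector-valued Kolmogorov/Simon criterion (relative compactness of the time averages $\int_{t_1}^{t_2}f\,ds$ plus uniform smallness of translations) for part $(i)$, and an Arzel\`a--Ascoli argument for part $(ii)$. Your quantitative translation estimate is the right one --- from $a_h(t)\le G_h(t)=\int_t^{t+h}\|f_t\|_{X_1}\,ds$ the bounds $\|G_h\|_{L^1}\le h\|f_t\|_{L^1(X_1)}$ and $\|G_h\|_{L^\infty}\le\|f_t\|_{L^1(X_1)}$ already give $\|a_h\|_{L^p}\le h^{1/p}\|f_t\|_{L^1(X_1)}$ uniformly, so your second splitting via $b_h$ is redundant (harmless, but one of the two arguments suffices) --- and your diagnosis of where $p<\infty$ in $(i)$ and $p>1$ in $(ii)$ are used is accurate.

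One step in part $(ii)$ deserves an explicit line. The bound $F\subset L^{\infty}([0,T];X_0)$ holds only almost everywhere in $t$, and closed balls of $X_0$ need not be closed in $X_1$, so at exceptional times $f(t)$ need not belong to $X_0$ at all; consequently the sup-in-time Ehrling reduction cannot be applied to $f_n(t)-f_m(t)$ at every $t$ without justification, and norm lower semicontinuity along $X_1$-convergence is not available in general Banach spaces. The repair stays entirely inside your framework: apply Ehrling to differences $f(t)-f(s)$ at almost every pair of times, obtaining the uniform modulus $\|f(t)-f(s)\|_{X}\le 2\epsilon M+C_\epsilon C|t-s|^{1-1/p}$ on a set of full measure, so each $f$ has (by completeness of $X$) a representative in $C([0,T];X)$ with this same modulus; with that in hand your pointwise-compactness-plus-equicontinuity argument can be run directly in $C([0,T];X)$, making the two-step reduction through $C([0,T];X_1)$ unnecessary. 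With this one clarification the proof is complete and coincides with the proof in the cited source.
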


The next lemma serves as a valuable tool for enhancing weak convergence to the strong convergence.
\begin{lem}\label{lem-strong} \cite{Majda1986book}
If the function sequence $\{w_n\}_{n=1}^{\infty}$converges weakly to $w$ in a Hilbert space $X$, then it converges strongly to $w$ in $X$ if and only if
	\begin{align}
\|w\|_{X}\geq \lim\sup_{n\rightarrow\infty}\|w_n\|_{X}.
\nonumber
\end{align}
\end{lem}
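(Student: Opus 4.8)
The plan is to reduce the entire statement to the polarization expansion of the norm together with the defining property of weak convergence. Writing $\langle\cdot,\cdot\rangle$ for the inner product on the Hilbert space $X$, the starting point is
\[
\|w_n-w\|_{X}^2=\|w_n\|_{X}^2-2\langle w_n,w\rangle+\|w\|_{X}^2.
\]
The crucial observation is that weak convergence $w_n\rightharpoonup w$ forces the cross term to converge: testing the weak limit against the fixed element $w$ gives $\langle w_n,w\rangle\to\langle w,w\rangle=\|w\|_{X}^2$. Substituting this into the expansion and passing to $\limsup$ (a convergent summand may be pulled out of the $\limsup$) yields the master identity
\[
\limsup_{n\to\infty}\|w_n-w\|_{X}^2=\limsup_{n\to\infty}\|w_n\|_{X}^2-\|w\|_{X}^2 .
\]
Since strong convergence is exactly the vanishing of the left-hand side, this identity already contains the equivalence; the remaining work is to match it against the stated inequality.

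Next I would record the one-sided inequality that always holds, namely weak lower semicontinuity of the norm, $\|w\|_{X}\le\liminf_{n}\|w_n\|_{X}\le\limsup_{n}\|w_n\|_{X}$. When $w\ne 0$ this follows directly from Cauchy--Schwarz: $\|w\|_{X}^2=\lim_n\langle w_n,w\rangle\le\big(\limsup_n\|w_n\|_{X}\big)\|w\|_{X}$, and dividing by $\|w\|_{X}$ gives the claim (the case $w=0$ being trivial). With this in hand I would establish both directions. For the forward implication, strong convergence gives $\|w_n\|_{X}\to\|w\|_{X}$ by continuity of the norm, hence $\limsup_n\|w_n\|_{X}=\|w\|_{X}$ and in particular $\|w\|_{X}\ge\limsup_n\|w_n\|_{X}$. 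For the converse, the hypothesis $\|w\|_{X}\ge\limsup_n\|w_n\|_{X}$ combined with the automatic bound $\|w\|_{X}\le\limsup_n\|w_n\|_{X}$ forces the equality $\limsup_n\|w_n\|_{X}=\|w\|_{X}$; squaring (which commutes with $\limsup$ on nonnegative sequences, $x\mapsto x^2$ being continuous and increasing on $[0,\infty)$) gives $\limsup_n\|w_n\|_{X}^2=\|w\|_{X}^2$, and feeding this into the master identity yields $\limsup_n\|w_n-w\|_{X}^2=0$, i.e. strong convergence.

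This is essentially routine functional analysis, so there is no substantial obstacle; the only point demanding care is that the hypothesis controls a $\limsup$ rather than a genuine limit. One must therefore not try to take limits prematurely, but instead combine the hypothesis with the always-available weak lower semicontinuity to \emph{upgrade} the $\limsup$ to an honest limit before invoking the polarization identity. I would also flag explicitly that the passage from $\limsup\|w_n\|_{X}=\|w\|_{X}$ to $\limsup\|w_n\|_{X}^2=\|w\|_{X}^2$ uses continuity and monotonicity of squaring on the nonnegative reals, since this is the one implicit step in the argument.
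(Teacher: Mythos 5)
Your proof is correct. The paper gives no proof of this lemma at all --- it is simply quoted from \cite{Majda1986book} --- and your argument (expanding $\|w_n-w\|_X^2$, using weak convergence to pass to the limit in the cross term $\langle w_n,w\rangle$, and combining the hypothesis with weak lower semicontinuity of the norm to upgrade $\limsup_n\|w_n\|_X$ to an honest limit equal to $\|w\|_X$) is precisely the standard textbook argument behind that citation, so there is nothing to compare beyond noting that your write-up is complete and careful, including the otherwise-implicit step of squaring the $\limsup$.
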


The following lemma  allows one to estimate the $H^{m}$-norm of a vector-valued function $v$ based on  its
$H^{m-1}$-norm of $\mbox{curl}v$ and $\mbox{div} v$ (see \cite{XX2007}).
\begin{lem}\label{lem-div-curl}
Let $\Omega$ be a domain in $\mathbb{R}^{N}$ with smooth boundary $\partial\Omega$ and outward normal $n$. Then there exists a constant $C>0$,
such that
\begin{equation}\label{curl-div}
\|v\|_{H^{s}(\Omega)}\leq C\big(\|\mbox{div}v\|_{H^{s-1}(\Omega)}+\|\mbox{curl}v\|_{H^{s-1}(\Omega)}
+ |v\cdot n|_{H^{s-1/2}(\partial\Omega)}+\|v\|\big),
\end{equation}
and
\begin{equation}
\|v\|_{H^{s}(\Omega)}\leq C\big(\|\mbox{div}v\|_{H^{s-1}(\Omega)}+\|\mbox{curl}v\|_{H^{s-1}(\Omega)}
+ |v\times n|_{H^{s-1/2}(\partial\Omega)}+\|v\|\big),
\end{equation}
for any $v\in H^{s}(\Omega)$, $s\geq1$.
\end{lem}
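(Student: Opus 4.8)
The plan is to read each estimate as elliptic regularity for an overdetermined first-order system. For the normal condition, $v$ satisfies
\[ \mbox{div}\, v = f, \qquad \mbox{curl}\, v = g, \qquad v\cdot n = h \ \text{ on } \partial\Omega, \]
and for the tangential condition the last relation is replaced by $v\times n = k$. Passing to second order through the vector identity
\[ -\Delta v = \mbox{curl}\,\mbox{curl}\, v - \nabla\,\mbox{div}\, v, \]
one sees that $v$ solves a Poisson system whose forcing is controlled by $\|\mbox{div}\, v\|_{H^{s-1}}$ and $\|\mbox{curl}\, v\|_{H^{s-1}}$, while $v\cdot n$ and $v\times n$ are precisely the normal and tangential traces. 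Each of these boundary operators complements the Laplacian, so the pair forms a regular elliptic boundary value problem in the sense of Agmon-Douglis-Nirenberg, and the associated a priori estimate is exactly \eqref{curl-div} and its $v\times n$ analogue. In dimensions $N>3$ one reads $\mbox{curl}\, v$ as the full antisymmetric part of $\nabla v$ and replaces the vector identity by the corresponding Hodge-Laplacian identity, with no change to the argument.

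For a self-contained argument I would instead induct on $s$, taking $s=1$ as the base case via integration by parts. Starting from the pointwise identity
\[ |\nabla v|^2 = |\mbox{curl}\, v|^2 + (\mbox{div}\, v)^2 - \mbox{div}\big(v\,\mbox{div}\, v - (v\cdot\nabla)v\big), \]
integrating over $\Omega$ and applying the divergence theorem yields
\[ \int_\Omega |\nabla v|^2 = \int_\Omega \big(|\mbox{curl}\, v|^2 + (\mbox{div}\, v)^2\big) - \int_{\partial\Omega}\big((v\cdot n)\,\mbox{div}\, v - ((v\cdot\nabla)v)\cdot n\big)\,dS. \]
To handle the boundary integral I would extend $n$ to a neighbourhood of $\partial\Omega$ and use $((v\cdot\nabla)v)\cdot n = (v\cdot\nabla)(v\cdot n) - v\cdot\big((v\cdot\nabla)n\big)$, the last factor being the second fundamental form of $\partial\Omega$. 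After this reorganization the boundary term is controlled by $C\big(|v\cdot n|_{H^{1/2}(\partial\Omega)}^2 + \|v\|_{L^2(\partial\Omega)}^2\big)$ for the normal case (and by $|v\times n|_{H^{1/2}(\partial\Omega)}$ for the tangential case). A trace inequality together with interpolation gives $\|v\|_{L^2(\partial\Omega)}^2 \leq \epsilon\|\nabla v\|^2 + C_\epsilon\|v\|^2$, and choosing $\epsilon$ small absorbs the gradient into the left-hand side, proving the case $s=1$.

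For the inductive step, assuming \eqref{curl-div} with $s$ replaced by $s-1$, I would localize by a partition of unity subordinate to boundary charts, flatten $\partial\Omega$, and apply tangential difference quotients $\partial_\tau$. Since $\mbox{div}$ and $\mbox{curl}$ commute with $\partial_\tau$ up to lower-order commutators carried by the curvature of $\partial\Omega$, the estimate for $\partial_\tau v$ follows from the $s-1$ case, controlling all but the purely normal derivatives; those are recovered from $\mbox{div}\, v$, $\mbox{curl}\, v$ and the tangential derivatives through $-\Delta v = \mbox{curl}\,\mbox{curl}\, v - \nabla\,\mbox{div}\, v$. Summing over the finite boundary cover and the interior and absorbing the lower-order remainders as in the base case yields \eqref{curl-div} for $s$. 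Because $\partial\Omega$ is compact, finitely many charts suffice and the constant $C$ is uniform even though $\Omega$ is unbounded.

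The main obstacle in both routes is the boundary analysis. In the elliptic formulation the real content is verifying the complementing (Lopatinski-Shapiro) condition for the pairs (Laplacian, normal trace) and (Laplacian, tangential trace), which is what certifies ellipticity and hence the gain of one derivative; this is a symbol computation on the half-space. In the hands-on induction the delicate point is the bookkeeping of the curvature terms produced when $\partial_\tau$ meets the boundary conditions, together with the reorganization of $(v\cdot n)\,\mbox{div}\, v$, which carries no a priori trace at the base level $s=1$: one must show that every such term is genuinely of lower order so that trace and interpolation can absorb it. Everything else, namely the vector identities, the divergence theorem, and the interior elliptic estimate, is routine.
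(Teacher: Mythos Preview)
The paper does not prove this lemma; it is stated as a known result with a citation to Xiao--Xin \cite{XX2007} (``see \cite{XX2007}'' appears just before the statement). So there is no paper proof to compare your proposal against.

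Your sketch is a reasonable outline of how such div-curl estimates are established, and both routes you describe---the ADN elliptic-regularity viewpoint and the hands-on induction via tangential difference quotients---are standard. One caution: at the base level $s=1$ your boundary integral contains the term $(v\cdot n)\,\mbox{div}\, v$, and $\mbox{div}\, v$ has no $L^2(\partial\Omega)$ trace when $v$ is merely in $H^1$. You acknowledge this, but ``showing it is lower order'' is not quite right; the usual fix is to rewrite the boundary integrand entirely in terms of tangential derivatives of $v\cdot n$ (or $v\times n$) and curvature terms, so that the problematic normal-derivative factor never appears. This is exactly the computation carried out in the cited reference, and it is where the $H^{1/2}$ boundary norm of the trace enters.
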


Now, we present a series of  elliptic estimates for elliptic equations subject to  Navier-slip  boundary conditions in the smooth exterior domain $\Omega$.

\begin{lem}\label{lem-elliptic-1} \cite{DL2024}
Assume that  $\Omega$ is the exterior domain of a bounded  domain in $\mathbb{R}^3$ with smooth boundary.
Suppose that $u$ satisfy the Lam\'{e} equation:
	\begin{equation}\label{equ-u}
		\left\{
		\begin{array}{llll}
			-\mu\Delta u-(\mu+\lambda)\nabla\mbox{div}u=g\quad \mbox{in}\quad \Omega,\\[2mm]
			u\cdot n=0,\quad \mbox{curl} u\times n=-K(x)u\quad \mbox{on}\quad \partial\Omega,
		\end{array}
		\right.
	\end{equation}
where $n$ stands for the outward unit normal to $\partial\Omega$, and  $K(x)$ is a $3\times 3$ symmetric matrix.
There exists a positive constant $C$ depending only on $\mu,\lambda,\Omega$ and the matrix $K$ such that it holds
\begin{equation*}
	\|\nabla^2u\|_{H^{s}(\Omega)}\leq C\big(\|g\|_{H^{s}(\Omega)}+\| \nabla u\|_{L^{2}(\Omega)}\big),
\end{equation*}
where $s=0,1,2$.
\end{lem}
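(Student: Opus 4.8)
The plan is to reduce the vector Lam\'e system to scalar/vector Poisson problems for the divergence and the curl of $u$, estimate each by elliptic regularity, and then reassemble $\nabla^2 u$ via the div--curl inequality of Lemma \ref{lem-div-curl}. Writing $d=\mbox{div}\,u$ and $\omega=\mbox{curl}\,u$ and using the identity $\Delta u=\nabla d-\mbox{curl}\,\omega$, equation \eqref{equ-u} becomes $\mu\,\mbox{curl}\,\omega-(2\mu+\lambda)\nabla d=g$. Taking the divergence and using $\mbox{div}\,\mbox{curl}=0$ yields the Poisson equation $-(2\mu+\lambda)\Delta d=\mbox{div}\,g$, while taking the curl and using $\mbox{curl}\,\nabla=0$ together with $\mbox{div}\,\omega=0$ yields $-\mu\Delta\omega=\mbox{curl}\,g$.

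Next I would extract the boundary data for $d$ and $\omega$ from the Navier-slip conditions. The condition $\mbox{curl}\,u\times n=-Ku$ fixes the tangential part $\omega_\tau$ on $\partial\Omega$, and since $u\cdot n=0$ the surface-curl identity gives the normal part $\omega\cdot n=\mbox{curl}_{\partial\Omega}(u_\tau)$; thus both the tangential and normal traces of $\omega$ are controlled by $u$ and its tangential derivatives on the boundary. For $d$, dotting the rewritten equation with $n$ on $\partial\Omega$ and using $(\mbox{curl}\,\omega)\cdot n=\mbox{curl}_{\partial\Omega}(\omega_\tau)$ produces the Neumann condition $(2\mu+\lambda)\partial_n d=\mu\,\mbox{curl}_{\partial\Omega}(\omega_\tau)-g\cdot n$, whose right-hand side again involves only $g$ and tangential derivatives of $Ku$.

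With these ingredients, the elliptic estimates for the Poisson problems give $\|\nabla d\|_{H^s}$ and $\|\nabla\omega\|_{H^s}$ controlled by $\|g\|_{H^s}$ plus boundary and lower-order contributions. Applying Lemma \ref{lem-div-curl} to $u$ with exponent $s+2$ and the vanishing trace $u\cdot n=0$ gives
\[
\|u\|_{H^{s+2}}\leq C\big(\|d\|_{H^{s+1}}+\|\omega\|_{H^{s+1}}+\|u\|\big),
\]
and combining with the elliptic bounds for $d$ and $\omega$ closes the estimate after absorbing the top-order boundary terms into the left-hand side.

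The hard part will be the boundary analysis together with the exterior-domain normalization. The boundary integrals above contain tangential derivatives of $u$ (through $K(x)u$ and $\mbox{curl}_{\partial\Omega}$); by the trace theorem and interpolation these split into a harmless lower-order piece and a top-order piece that must be absorbed on the left, which forces careful bookkeeping of constants and use of the symmetry of $K$. Finally, since $\Omega$ is unbounded, the full norm $\|u\|$ appearing in Lemma \ref{lem-div-curl} should be replaced by $\|\nabla u\|_{L^2}$: one localizes with a cutoff, treats the near-boundary region by the estimates above and the far field by the Sobolev inequality $\|u\|_{L^6}\leq C\|\nabla u\|_{L^2}$ from Lemma \ref{lem-2-1}, yielding the stated homogeneous bound $\|\nabla^2 u\|_{H^s}\leq C(\|g\|_{H^s}+\|\nabla u\|_{L^2})$ for $s=0,1,2$.
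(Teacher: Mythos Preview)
The paper does not supply a proof of this lemma; it is quoted as a preliminary result from \cite{DL2024}, so there is no in-text argument to compare your proposal against directly.

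Your strategy is sound and is one standard route to such estimates. It is worth noting, however, that the paper's own Appendix~\ref{prop-J} proves a closely analogous elliptic estimate (for $J$, with boundary conditions of the same shape), and the method there is more direct than what you sketch. Instead of extracting separate Poisson problems for $d=\mbox{div}\,u$ and $\omega=\mbox{curl}\,u$ with their own boundary data, one writes the operator as $\mu\,\mbox{curl}^2 u-(2\mu+\lambda)\nabla\mbox{div}\,u=g$, applies Lemma~\ref{lem-div-curl} to reduce $\|\nabla^2 u\|$ to $\|\nabla\mbox{div}\,u\|+\|\mbox{curl}^2 u\|$ plus the trace $|\mbox{curl}\,u\times n|_{H^{1/2}}=|Ku|_{H^{1/2}}\le C\|u\|_1$, and then simply tests the equation against $\nabla\mbox{div}\,u$ and against $\mbox{curl}^2 u$. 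The cross term $\int_\Omega\mbox{curl}^2 u\cdot\nabla\mbox{div}\,u$ collapses to the boundary integral $\int_{\partial\Omega}Ku\cdot\nabla\mbox{div}\,u$, which is handled by Proposition~\ref{lem-boun} (since $Ku\cdot n=(\mbox{curl}\,u\times n)\cdot n=0$) and is lower order. This bypasses the need to pose and solve auxiliary Neumann/Dirichlet problems and the associated absorption of top-order boundary traces that you flagged as the hard part. Both routes arrive at the same estimate; the appendix-style argument is shorter and stays closer to the structure of the Lam\'e operator, while your decomposition makes the underlying elliptic mechanism more transparent. Your handling of the exterior-domain normalization (cutoff plus $\|u\|_{L^6}\le C\|\nabla u\|_{L^2}$ at infinity) is the correct way to trade $\|u\|$ for $\|\nabla u\|_{L^2}$ in either approach.
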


Next, we  give the following estimate about the boundary integrals.

\begin{prop}\label{lem-boun}
Assume that $\Omega$ is the exterior domain of a bounded  domain in $\mathbb{R}^3$
 with smooth boundary. Let $\nabla v\in L^{2}(\Omega)$ with $v\cdot n=0$ on $\partial\Omega$. It
holds for $\nabla f \in L^{2}$, Then for $\nabla f \in L^{2}$ there holds,
\begin{align}
\left|\int_{\partial\Omega}v\cdot\nabla f\right|\leq \|\nabla v\|_{L^2}\|\nabla f\|_{L^2}.
\end{align}
\end{prop}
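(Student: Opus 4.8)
The plan is to turn the surface integral into a volume integral over $\Omega$ via the divergence theorem, using an auxiliary vector field chosen so cleverly that every second-order derivative of $f$ disappears. Fix a smooth, compactly supported extension $N\in C_c^\infty(\mathbb{R}^3;\mathbb{R}^3)$ of the outward unit normal, with $N=n$ on $\partial\Omega$ (so $N\cdot n=1$ there) and $N\equiv 0$ outside a bounded collar $K$ of $\partial\Omega$. Set
\begin{equation*}
P:=(v\cdot\nabla f)\,N-(N\cdot\nabla f)\,v .
\end{equation*}
Because $v\cdot n=0$, on $\partial\Omega$ we have $P\cdot n=(v\cdot\nabla f)(N\cdot n)-(N\cdot\nabla f)(v\cdot n)=v\cdot\nabla f$; since $P$ is compactly supported, applying the divergence theorem on $\Omega$ (the contribution from infinity vanishes) gives $\int_{\partial\Omega}v\cdot\nabla f=\int_\Omega\mbox{div}\,P\,dx$.

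Next I would expand $\mbox{div}\,P$. Writing $a=v\cdot\nabla f=v_k\partial_k f$ and $b=N\cdot\nabla f=N_k\partial_k f$, the two second-order contributions $N_j v_k\,\partial_j\partial_k f$ and $v_j N_k\,\partial_j\partial_k f$ arising from $\mbox{div}(aN)$ and $\mbox{div}(bv)$ coincide by symmetry of the mixed partials, hence cancel in the difference $\mbox{div}\,P=\mbox{div}(aN)-\mbox{div}(bv)$. What survives contains only first derivatives:
\begin{equation*}
\mbox{div}\,P=N_j(\partial_j v_k)(\partial_k f)-(N\cdot\nabla f)\,\mbox{div}\,v+\nabla f\cdot\big[(\mbox{div}\,N)\,v-(v\cdot\nabla)N\big].
\end{equation*}
The first two terms are controlled at once by Cauchy--Schwarz, bounded by $C\|N\|_{L^\infty}\|\nabla v\|_{L^2}\|\nabla f\|_{L^2}$ (using $\|\mbox{div}\,v\|_{L^2}\le\sqrt3\,\|\nabla v\|_{L^2}$).

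The delicate term is the bracketed one, since it contains $v$ \emph{undifferentiated}; the key point is that $(\mbox{div}\,N)v-(v\cdot\nabla)N$ is supported in the bounded collar $K$, so that
\begin{equation*}
\Big|\int_\Omega\nabla f\cdot\big[(\mbox{div}\,N)\,v-(v\cdot\nabla)N\big]\,dx\Big|\le C\|\nabla N\|_{L^\infty}\|v\|_{L^2(K)}\|\nabla f\|_{L^2}.
\end{equation*}
Although $v$ need not lie in $L^2(\Omega)$ on an exterior domain, compact support rescues the estimate: by H\"older and the Sobolev inequality $\|v\|_{L^6}\le C\|\nabla v\|_{L^2}$ of Lemma \ref{lem-2-1}, one gets $\|v\|_{L^2(K)}\le|K|^{1/3}\|v\|_{L^6(\Omega)}\le C\|\nabla v\|_{L^2}$. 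Collecting the three bounds yields $\big|\int_{\partial\Omega}v\cdot\nabla f\big|\le C\|\nabla v\|_{L^2}\|\nabla f\|_{L^2}$ with $C=C(\Omega)$; thus the stated coefficient should be read as a constant depending only on $\Omega$.

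I expect the main obstacle to be precisely the undifferentiated $v$ in $\mbox{div}\,P$: on an unbounded exterior domain $\|v\|_{L^2(\Omega)}$ is unavailable, and the whole estimate hinges on confining these terms to the compact collar $K$ through the compactly supported extension $N$ and then invoking $D^1\hookrightarrow L^6$. A secondary technical point is rigor of the manipulations: since $v,f$ are only assumed to have $L^2$ gradients, I would first prove the identity and the estimate for smooth $v,f$ satisfying $v\cdot n=0$ (where all traces and the divergence theorem are classical), and then pass to the general case by approximation in the $\|\nabla\,\cdot\|_{L^2}$ norm, the final inequality guaranteeing that the boundary functional $f\mapsto\int_{\partial\Omega}v\cdot\nabla f$ extends continuously.
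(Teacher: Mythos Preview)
Your argument is correct and in fact coincides with the paper's: your auxiliary field $P=(v\cdot\nabla f)\,N-(N\cdot\nabla f)\,v$ equals $\nabla f\times v^{\perp}$ with $v^{\perp}:=-v\times N$ (by the BAC--CAB rule), which is precisely the vector field the paper uses, obtaining the cancellation of $\nabla^2 f$ in one stroke via the identity $\mbox{div}(\nabla f\times v^{\perp})=-\nabla f\cdot\mbox{curl}\,v^{\perp}$. Your presentation is more explicit about the compactly supported extension $N$ and the $D^1\hookrightarrow L^6$ step needed to handle the undifferentiated $v$ on the exterior domain---points the paper leaves implicit---and you are right that the resulting constant depends on $\Omega$.
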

\begin{proof}
Denote $v^{\bot}=-v\times n$, it follows from the boundary condition $v\cdot n|_{\pl\Omega}=0$ that
\begin{equation*}
v=v^{\bot}\times n\quad \mbox{on}\quad \pl\Omega.
\end{equation*}
Therefore, using some simple identities about vector fields and the divergence theorem, we
conclude that
\begin{align*}
\int_{\partial\Omega}v\cdot\nabla f&=\int_{\partial\Omega}v^{\bot}\times n\cdot\nabla f
=\int_{\partial\Omega}(\nabla f\times v^{\bot})\cdot n\\[2mm]
&=\int_{\Omega}\mbox{div}(\nabla f\times v^{\bot})
=-\int_{\Omega}\nabla f\times \mbox{curl}v^{\bot}\\[2mm]
&\leq \|\nabla f\|_{L^2}\|\nabla v\|_{L^2}.
\end{align*}
This completes the proof.
\end{proof}

At the end of this section, we will provide  some  estimates for $H$, which will be  used later. When $H$ and $v$ smooth sufficiently,  one can see
\begin{equation}\label{fact1}
\mbox{curl}(v\times H)=H\cdot \nabla v-v\cdot\nabla H-H\mbox{div} v.
\end{equation}
due to  $\mbox{div}H = 0$. Moreover, by using the boundary condition $H\cdot n|_{\partial\Omega}=0$ and (\ref{curl-div}), it holds
\begin{equation}\label{fact2}
\|\nabla H\|^2\leq C\left(\|\mbox{curl}H\|^2+\|H\|^2\right).
\end{equation}

\section{Existence of Regular Solutions}
In this section, we will give the proof of Theorem \ref{thm1} by subsections \ref{3.1}-\ref{3.6}.  For this purpose, we first reformulate the original  initial-boundary value problem into a simpler form.
\subsection{Reformulation}\label{3.1}
 In this subsection, introducing some new variables as
 $$\phi=\rho^{\frac{\gamma-1}{2}},\quad  \psi=\frac{\nabla\rho}{\rho}, \quad J=\frac{H}{\rho},$$
  we first reformulate the original initial-boundary value problem (\ref{prob1})-(\ref{1.8}) as follows:

\begin{equation}\label{J}
\left\{
\begin{array}{llll}
\phi_t+u\cdot\nabla\phi+\frac{\gamma-1}{2}\phi\mbox{div}u=0,\\[2mm]
u_{t}+u\cdot\nabla u-(\alpha\Delta u+(\alpha+\beta)\nabla\mbox{div}u)+\frac{2A\gamma}{\gamma-1}\phi\nabla\phi
=\psi\cdot Q(u)+J\cdot\nabla H-J\cdot(\nabla H)^{T},\\[2mm]
H_{t}- \eta \Delta H=\mbox{curl}(u\times H),\quad \quad \mbox{div}H=0,\\[2mm]
J_t+u\cdot\nabla J-\eta\Delta J=J\cdot\nabla u+\eta J\mbox{div}\psi+\eta J\psi^2+2\eta \psi\cdot \nabla J,
\end{array}
\right.
\end{equation}
where
\begin{equation*}
	Q(u)=\alpha\left(\nabla u+(\nabla u)^T\right)+\beta\mbox{div}u I_3,
	\end{equation*}
with the  following boundary conditions:
\begin{equation}\label{boundary2}
 	u\cdot n=0,\quad   \mbox{curl}u\times n=-K(x)u,\quad \mbox{on}\quad \partial\Omega,
 \end{equation}
 \begin{equation}\label{boundary3}
\quad H\cdot n=0,\quad \mbox{curl}H\times n=0, \quad \mbox{on}\quad \partial\Omega,
\end{equation}
and
\begin{equation}\label{bdy-J}
J\cdot n|_{\partial\Omega}=0,\quad \mbox{curl}J\times n|_{\partial\Omega}=-(\psi\times J)\times n|_{\partial\Omega}=-(\psi\cdot n)J|_{\partial\Omega}.
\end{equation}
Because
\begin{align*}
\mbox{curl}J=\frac{1}{\rho}\mbox{curl}H+\nabla\rho\times H=\frac{1}{\rho}\mbox{curl}H-\psi\times J,
\end{align*}
from \eqref{boundary3}, we derive  the boundary conditions for $J$ as follows,
 \begin{align*}
\mbox{curl}J\times n=-\psi\times J\times n=-(\psi\cdot n)J.
\end{align*}
The initial data is given by
 \begin{equation}\label{initial-2}
(\phi,u,H,J)|_{t=0}=(\phi_0,u_0,H_0,J_0)\equiv\left(\rho_0^{\frac{\gamma-1}{2}},u_0,H_0,\frac{H_0}{\rho_0}\right).
\end{equation}
Here we point out that $\psi$ satisfies
	$$\psi_t+\nabla(u\cdot\psi)+\nabla\mbox{div}u=0.$$
	
Now we turn to establish the local well-posedness of the strong solution to (\ref{J})-(\ref{initial-2}). More precisely,
\begin{thm}\label{thm2}
If the initial data $(\phi_0,u_0,H_0,J_0)$ satisfy the following  conditions:
 \begin{equation}\label{intial2}
		\phi_0>0,\quad (\phi_0,u_0,H_0)\in H^3,\quad \psi_0=\frac{2}{\gamma-1}\frac{\nabla\phi_0}{\phi_0}\in D^1\cap D^2,\quad J_0=\frac{H_0}{\rho_0}\in H^2,
	\end{equation}
then there exist a time $T_{*}>0 $ and a unique  local strong solution $(\phi,u,H,J)$ to the initial-boundary value problem (\ref{J})-(\ref{initial-2}) satisfying
\begin{equation}
\begin{split}
&\phi\in C([0,T_{*}];H^3), \quad \phi_t\in C([0,T_{*}];H^2),\\[2mm]
&\psi\in C([0,T_{*}];D^1 \cap D^2)  \quad \psi_t\in C([0,T_{*}];H^1), \quad \psi_{tt}\in L^2(0,T_{*};L^2)\\[2mm]
&H\in C([0,T_{*}];H^3)\cap L^2(0,T_{*}; H^4), \\[2mm]
 & H_t\in C([0,T_{*}];H^1)\cap L^2(0,T_{*}; H^2), \quad H_{tt}\in L^2(0,T_{*};L^2),
\\[2mm]
 &J\in L^{\infty}([0,T_{*}];H^2)\cap L^2(0,T_{*}; H^3),
  J_t\in L^{\infty}([0,T_{*}];L^2)\cap L^2(0,T_{*}; H^1),
\\[2mm]
 &u\in C([0,T_{*}];H^3)\cap L^2(0,T_{*}; H^4), \\[2mm]
 & u_t\in C([0,T_{*}];H^1)\cap L^2(0,T_{*}; H^2), \quad u_{tt}\in L^2(0,T_{*};L^2). \nonumber
 \end{split}
 \end{equation}
\end{thm}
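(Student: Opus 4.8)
The plan is to construct the solution by a linearization--iteration scheme in the spirit of \cite{LPZ20172D,Cho2006}, to establish uniform a priori estimates on a short time interval, and to pass to the limit by compactness. Starting from a smooth approximation of the initial data, given an iterate $(\phi^k,\psi^k,u^k,H^k,J^k)$ I would define the next one by solving the linear problems obtained from $(\ref{J})$ by freezing the transport and coefficient fields at level $k$: the transport equation $(\ref{J})_1$ for $\phi^{k+1}$ together with its companion $\psi_t+\nabla(u^k\cdot\psi)+\nabla\mbox{div}u^k=0$ for $\psi^{k+1}$ along the flow generated by $u^k$; the linear Lam\'e-type parabolic equation $(\ref{J})_2$ for $u^{k+1}$ under the boundary condition $(\ref{boundary2})$, in which $\psi^k\cdot Q(\cdot)$ is treated as a known coefficient and $J^k\cdot\nabla H^k-J^k\cdot(\nabla H^k)^T$ as a forcing; and the linear parabolic equations $(\ref{J})_3$, $(\ref{J})_4$ for $H^{k+1}$, $J^{k+1}$ under $(\ref{boundary3})$, $(\ref{bdy-J})$. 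Each step is solvable by classical linear transport and parabolic theory, and since $\phi_0>0$ with far-field decay, the transport structure keeps $\phi$ positive with the vacuum confined to infinity, so $\psi$ stays controlled in $D^1\cap D^2$.

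The heart of the argument is a uniform a priori estimate. I would introduce an energy of the form
\bee
\mathcal{E}(t)=1+\|\phi\|_3^2+|\psi|_{D^1}^2+|\psi|_{D^2}^2+\|u\|_3^2+\|u_t\|_1^2+\|H\|_3^2+\|H_t\|_1^2+\|J\|_2^2+\|J_t\|^2,
\eee
and show by a Gr\"onwall argument that $\mathcal{E}(t)$ stays bounded in terms of its initial value for $t\in[0,T_*]$ with $T_*$ small. The bounds for $\phi$ and $\psi$ come from the transport structure via commutator and characteristic estimates. For the parabolic variables I would use Lemma \ref{lem-div-curl} to bound full-derivative norms by the $L^2$-norms of $\mbox{div}$ and $\mbox{curl}$ plus boundary terms, the elliptic estimate Lemma \ref{lem-elliptic-1} to recover $\|\nabla^2 u\|$, and the tailored estimate of Appendix \ref{prop-J} to recover $\|\nabla^2 J\|_1$. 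The apparently singular source $\psi\cdot Q(u)$ is absorbed because $D^1\cap D^2\hookrightarrow L^\infty$ by Lemma \ref{lem-2-1}, while the magnetic coupling is handled using the identity $(\ref{fact1})$ and the estimate $(\ref{fact2})$.

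The main obstacle will be the boundary terms. The Navier-slip condition $(\ref{boundary2})$ is inhomogeneous, and the condition $(\ref{bdy-J})$ couples $J$ to $\psi$ through $-(\psi\cdot n)J$, so the boundary integrals appearing in the higher-order energy estimates do not vanish. I would control them by the trace theorem together with Proposition \ref{lem-boun}, distributing derivatives so that each boundary contribution is of strictly lower order and can be absorbed into $\mathcal{E}(t)$ or into the dissipation. The $J$-estimate is the most delicate point, since its boundary data is nonstandard and depends on $\psi$; this is precisely where the elliptic estimate of the appendix is indispensable.

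Finally, I would show the iteration is a contraction in a lower-order norm, for instance estimating the differences $(\phi^{k+1}-\phi^k,\dots)$ in $L^\infty_t L^2_x$ (with an extra gradient for $u,H,J$), so that the sequence is Cauchy and converges strongly. Compactness via the Aubin--Lions Lemma \ref{lem-AL} identifies the limit as a solution with the stated time-continuity, and Lemma \ref{lem-strong} upgrades weak convergence to strong convergence to recover continuity into the top-regularity spaces. Uniqueness follows from the same difference estimate applied to two solutions. Lastly, the regularity $\rho\in C([0,T_*];H^3)$ for $1<\gamma\le\frac{5}{3}$ or $\gamma=2,3$ is read off from the relation $\rho=\phi^{2/(\gamma-1)}$ together with $\phi\in C([0,T_*];H^3)$, the exponent restriction ensuring that this composition remains in $H^3$.
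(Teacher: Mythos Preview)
Your proposal is correct and follows essentially the same route as the paper: linearize by freezing the velocity field, derive uniform a priori estimates using Lemma~\ref{lem-div-curl}, Lemma~\ref{lem-elliptic-1}, Proposition~\ref{lem-boun} and Appendix~\ref{prop-J} to control the inhomogeneous boundary contributions, then prove the iterates form a Cauchy sequence in a low norm and pass to the limit. One technical point you gloss over but the paper makes explicit is a preliminary regularization: the linearized problem is first solved with data $\phi_{\sigma 0}=\phi_0+\sigma$ bounded strictly away from zero (so that classical transport and parabolic theory applies directly), the estimates of Lemmas~\ref{lem-phi}--\ref{lem-u} are shown to be uniform in $\sigma$, and only after letting $\sigma\to 0$ (Lemma~\ref{lem4}) is the iteration launched; also, in the paper's linearization the forcing in the $u$-equation is taken as $\psi^{k+1}\cdot Q(u^k)$ with $\psi^{k+1},H^{k+1},J^{k+1}$ already solved from $v=u^k$, rather than as a coefficient in $u^{k+1}$.
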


The proof of Theorem \ref{thm2} will be divided into four subsequent subsections, \ref{3.2}-\ref{3.5}.
In Subsection \ref{3.6}, it will be demonstrated that Theorem \ref{thm2} implies Theorem \ref{thm1}.

\subsection{Linearization}\label{3.2}

In order to proceed with nonlinear system (\ref{J}), we first need to address  the following linearized problem:
\begin{equation}\label{linear}
\left\{
\begin{array}{llll}
\phi_t+v\cdot\nabla\phi+\frac{\gamma-1}{2}\phi\mbox{div}v=0,\\[2mm]
u_{t}-\left(\alpha\Delta u+(\alpha+\beta)\nabla\mbox{div}u\right)
=-v\cdot\nabla v-\frac{2A\gamma}{\gamma-1}\phi\nabla\phi+\psi\cdot Q(v)+J\cdot\nabla H-J\cdot (\nabla H)^{T},\\[2mm]
H_{t}- \eta \Delta H=\mbox{curl}(v\times H),\\[2mm]
J_t-\eta\Delta J=-v\cdot\nabla J+J\cdot\nabla v+\eta J\mbox{div}\psi+\eta J\psi^2+2\eta \psi\cdot \nabla J,
\end{array}
\right.
\end{equation}
where $v$ is a  known vector satisfying
\begin{equation}
v(x,0)=u_0(x),
\end{equation}
and $v$ has the same boundary conditions as $u$, that is
\begin{equation}
	\begin{split}
		&v\cdot n|_{\partial\Omega}=0,\quad  \mbox{curl}v\times n|_{\partial\Omega}=- K(x)v.
		\end{split}
	\end{equation}
Moreover,  $v$ belongs  to the following spaces:
\begin{equation}\label{reg1}
v\in C([0,T]; H^3)\cap  L^2([0,T]; H^4),\quad v_t\in C([0,T]; H^1)\cap  L^2([0,T]; H^2).
\end{equation}
The unknown variables $(u,H,J)$ meet the following boundary conditions:
\begin{equation}
 	u\cdot n=0,\quad  \mbox{curl}u\times n=-K(x)u\quad \mbox{on}\quad \partial\Omega,
 \end{equation}
 \begin{equation}
\quad H\cdot n=0,\quad \mbox{curl}H\times n=0 \quad \mbox{on}\quad \partial\Omega,
\end{equation}
and
\begin{equation}\label{bdy-Jl}
J\cdot n=0,\quad \mbox{curl}J\times n=-(\psi\times J)\times n=-(\psi\cdot n)J\quad \mbox{on}\quad \partial\Omega.
\end{equation}
We assume that
\begin{align}
\phi_0>0, \quad (\phi_0-\phi^{\infty},u_0, H_0)\in H^{3},\quad  J_0\in H^{2},
\end{align}
where $\phi^{\infty}\geq 0$ is a constant. At the same time, $\psi=\frac{2}{\gamma-1}\frac{\nabla\phi}{\phi}$ satisfies the following equation
\begin{equation*}
	\psi_t+\nabla(v\cdot\psi)+\nabla\mbox{div}v=0.
	\end{equation*}
A straightforward  calculation indicates $\partial_i\psi_j=\partial_j\psi_i$, which implies the above equation is  equivalent to
\begin{equation}\label{linear2}
	\psi_t+\sum_{l=1}^3A_l\partial_l\psi+B\psi+\nabla\mbox{div}v=0,
	\end{equation}
where
\begin{equation*}
	A_l=\begin{bmatrix}
		v_l&0&0\\
		0&v_l&0\\
		0&0&v_l
		\end{bmatrix}, \quad  l=1,2,3,
	\end{equation*}
 $B=(\nabla v)^{T}$. So equation (\ref{linear2}) is a positive symmetric hyperbolic system.

In the subsequent two subsections, we first solve the linearized problem \eqref{linear}-(\ref{bdy-Jl}) when the initial density is away
from vacuum ($\inf\phi_0>0$), then we will establish some  uniform estimates which are independent of the lower bound of $\phi_0$.

The following global well-posedness of a strong solution to (\ref{linear})-(\ref{bdy-Jl}) can be obtained  when $\phi_0\geq \sigma$ for some positive constant $\sigma$.
\begin{lem}\label{global}
Assume that $(\phi_0,u_0,H_0, J_0=\frac{H_0}{\rho_0})$ satisfies
\begin{equation*}
\phi_0\geq\sigma, \quad \phi_0-\phi^{\infty}\in H^3,\quad (u_0,H_0)\in H^{3},\quad J_0\in H^2,
\quad \psi_0=\frac{2}{\gamma-1}\frac{\nabla\phi_0}{\phi_0}\in D^1\cap D^2,
\end{equation*}
for some positive constant $\sigma$. Then for any finite constant $T>0$, there exists a unique strong solution $(\phi, u, H, J)$ in $[0,T]\times \Omega$
to (\ref{linear})-(\ref{bdy-Jl}) such that
\begin{align*}
&\phi\geq \underline{\sigma},\quad \phi-\phi^{\infty}\in C([0,T]; H^3),\quad \phi_t\in C([0,T]; H^2)\nonumber\\[2mm]
&\psi\in C([0,T]; D^1\cap D^2),\quad \psi_t\in C([0,T]; H^1),\quad  \psi_{tt}\in L^2(0,T; L^2)\nonumber\\[2mm]
&  u\in C([0,T]; H^3)\cap L^2([0,T]; H^4), \quad  u_t\in C([0,T]; H^1)\cap L^2([0,T]; H^2),\quad u_{tt}\in L^2(0,T; L^2)\nonumber\\[2mm]
&H\in C([0,T]; H^3)\cap L^2([0,T]; H^4),\quad H_t\in C([0,T]; H^1)\cap L^2([0,T]; H^2)\nonumber\\[2mm]
&J\in L^{\infty}([0,T];H^2)\cap L^2(0,T; H^3), \quad J_{t}\in L^{\infty}([0,T];L^2)\cap L^2(0,T; H^1),
\end{align*}
where  $\underline{\sigma}$ is a  positive constant depending only on $\sigma$ and $T$.
\end{lem}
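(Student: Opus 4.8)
The plan is to exploit the fact that, once the known field $v$ is fixed, the linearized system $(\ref{linear})$--$(\ref{bdy-Jl})$ is \emph{triangular}: none of $\phi$, $\psi$, $H$, $J$ depends on $u$, and $u$ never appears on the right-hand side of the other equations, so the problem can be solved sequentially. First I would solve the linear transport equation $(\ref{linear})_1$ for $\phi$ by the method of characteristics; integrating $\frac{d}{dt}\phi=-\frac{\gamma-1}{2}\phi\,\mbox{div}v$ along the flow of $v$ gives $\phi(X(t),t)=\phi_0\exp\big(-\frac{\gamma-1}{2}\int_0^t\mbox{div}v\, ds\big)$, which simultaneously preserves the lower bound $\phi\geq\underline{\sigma}:=\sigma e^{-CT}>0$ (since $\mbox{div}v$ is bounded because $v\in H^3$) and, after differentiating the equation and running standard transport energy estimates with the regularity $(\ref{reg1})$ of $v$, yields $\phi-\phi^\infty\in C([0,T];H^3)$ and $\phi_t\in C([0,T];H^2)$. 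The quantity $\psi=\frac{2}{\gamma-1}\frac{\nabla\phi}{\phi}$, which is well defined since $\phi\geq\underline{\sigma}>0$, solves the positive symmetric hyperbolic system $(\ref{linear2})$; from the classical theory for such systems with $H^3$ coefficients I obtain $\psi\in C([0,T];D^1\cap D^2)$, and differentiating in time gives $\psi_t\in C([0,T];H^1)$ and $\psi_{tt}\in L^2(0,T;L^2)$.

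Next I would solve the two linear parabolic problems that do not involve $u$. The equation $(\ref{linear})_3$ for $H$, under the perfect-conductivity boundary conditions $H\cdot n=0$, $\mbox{curl}H\times n=0$, is a heat-type equation with the known forcing $\mbox{curl}(v\times H)$ expanded via $(\ref{fact1})$; existence in $C([0,T];H^3)\cap L^2(0,T;H^4)$, together with the stated bounds on $H_t$ and $H_{tt}$, follows from a Galerkin approximation and energy estimates in which the derivative norms are closed using the div-curl inequality $(\ref{fact2})$ and Lemma $\ref{lem-div-curl}$, the Gagliardo--Nirenberg inequalities of Lemma $\ref{lem-2-1}$, and the regularity of $v$. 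The equation $(\ref{linear})_4$ for $J$ is treated the same way, now with $v$ and $\psi$ as known coefficients; the reaction terms $\eta J\,\mbox{div}\psi$, $\eta J\psi^2$ and $2\eta\,\psi\cdot\nabla J$ are controlled by the already-established bounds $\psi\in D^1\cap D^2$, producing $J\in L^\infty([0,T];H^2)\cap L^2(0,T;H^3)$ and $J_t\in L^\infty([0,T];L^2)\cap L^2(0,T;H^1)$.

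With $\phi,\psi,H,J$ now in hand, the right-hand side of the Lam\'e system $(\ref{linear})_2$ is a \emph{known} function, and I would solve for $u$ under the Navier-slip conditions $u\cdot n=0$, $\mbox{curl}u\times n=-K(x)u$. Since $\alpha>0$ the Lam\'e operator is uniformly elliptic, so the standard parabolic theory gives $u\in C([0,T];H^3)\cap L^2(0,T;H^4)$, $u_t\in C([0,T];H^1)\cap L^2(0,T;H^2)$ and $u_{tt}\in L^2(0,T;L^2)$, where the spatial elliptic regularity adapted to the boundary condition $\mbox{curl}u\times n=-K(x)u$ is furnished by Lemma $\ref{lem-elliptic-1}$. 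Throughout, all constants are tracked to depend only on $\sigma$ and $T$ (through $\underline{\sigma}$ and the norms of $v$), which is what the statement requires. Uniqueness of $(\phi,u,H,J)$ then follows by applying the same energy estimates to the difference of two solutions, which satisfies the identical linear system with zero initial data.

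The step I expect to be the main obstacle is the $J$-subproblem, precisely because of the boundary condition $\mbox{curl}J\times n=-(\psi\cdot n)J$ in $(\ref{bdy-Jl})$: its right-hand side couples $J$ to $\psi$ on $\partial\Omega$, so the inhomogeneous, $\psi$-dependent trace prevents a direct application of $(\ref{curl-div})$, and the top-order bound $\|\nabla^2 J\|_1$ cannot be obtained by the generic div-curl argument alone. This is where the dedicated elliptic estimate for $J$ proved in Appendix $\ref{prop-J}$ is needed. Carefully absorbing the resulting boundary contributions --- converting boundary integrals into interior gradient norms by means of the trace theorem and Proposition $\ref{lem-boun}$ --- while keeping every constant dependent only on $\sigma$ and $T$ is where the genuine work of the proof lies.
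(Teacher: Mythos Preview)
Your proposal is correct and follows essentially the same approach as the paper: exploit the triangular structure to solve $\phi$ by transport theory (with the characteristics formula giving the lower bound $\underline{\sigma}$), then $\psi$ by the theory of positive symmetric hyperbolic systems, and finally $(H,J,u)$ as linear parabolic problems using Lemmas~\ref{lem-div-curl}--\ref{lem-elliptic-1} and, for the $J$-boundary condition, Appendix~\ref{prop-J}. The paper's own proof is a three-sentence sketch of exactly this outline, so your write-up is in fact considerably more detailed than what appears there.
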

\begin{proof}
The existence, regularity and positivity of a unique solution $\phi$ to $(\ref{linear})_1$ can be obtained
essentially via the standard theory of transport equation. Next, we present a priori estimates for $\psi$ based on the standard energy estimate theory for positive symmetric hyperbolic systems. Utilizing the regularity of $\phi$ and $\psi$, along with Lemmas \ref{lem-div-curl}-\ref{lem-elliptic-1} and Appendix \ref{prop-J}, it is easy to determine $(J,H,u)$ from the linear parabolic equations. Consequently, we omit the details.
\end{proof}

\subsection{A priori estimates independent of $\sigma$}\label{3.3}
Now we are going to establish some uniform estimates on the solutions derived in Lemma \ref{global}. To this
end, we create a fixed value $T>0$ and a positive constant $c_0$ large enough such that
\begin{equation}\label{initial-0}
		2+\phi^{\infty}+\|\phi_0-\phi^{\infty}\|_3+\|u_0\|_3+\|H_0\|_3+\|J_0\|_2+|\psi_0 |_{D^1\cap D^2}\leq c_0.
	\end{equation}
Assume that there exist some time $T^{*}\in(0,T)$ and constants $c_i$ ($i = 1, 2, 3$) such that
\begin{equation*}
1\leq c_0\leq c_1\leq c_2\leq c_3,
\end{equation*}
and
\begin{align}\label{known}
&\sup_{0\leq t\leq T^{*}}\|v(t)\|^2_1+\int_0^{T^{*}}\left(\|\nabla v\|^2_1+\|v_t\|^2\right)dt\leq c_1^2,\nonumber\\[2mm]
&\sup_{0\leq t\leq T^{*}}\left(\|\nabla^2v(t)\|^2+\|v_t(t)\|^2\right)+\int_0^{T^{*}}\left(\|\nabla^3v\|^2+\|\nabla v_t\|^2\right)dt\leq c_2^2,\nonumber\\[2mm]
&\sup_{0\leq t\leq T^{*}}\left(\|\nabla^3v(t)\|^2+\|\nabla v_t(t)\|^2\right)+\int_0^{T^{*}}\left(\|\nabla^4v\|^2+\|\nabla^2v_t\|^2+\|v_{tt}\|^2\right)dt\leq c_3^2.
\end{align}

We will confirm $T^{*}$ and $c_i$ ($i=1,2,3$) subsequently, ensuring they depend only on $c_0$, $T$ and the fixed constants $\alpha,\beta,\gamma$ and $\eta$, as referenced in (\ref{constant}).

Let $(\phi, H, J, u)$ be the unique strong solution to (\ref{linear})-(\ref{bdy-Jl}) on $[0,T]\times \Omega$. A series of uniform local (in time) estimates will be established below.  Hereinafter, we use $C\geq 1$ to denote a generic positive constant that depends only on fixed constants $\alpha,\beta,\gamma$, $\eta$ and $T$.

It is observed that the equations for $\phi$ and $\psi$ are independent of $H$ and $J$, hence the a priori estimates for $\phi$ in Lemma \ref{lem-phi} and for $\psi$ in Lemma \ref{lem-psi} seem like those for the Cauchy problem of the Navier-Stokes equation (see Lemmas 3.2-3.3 in \cite{LPZ20172D}). We provide the proof here for the sake of completeness. We will now proceed with the estimations for $\phi$.
\begin{lem}\label{lem-phi}
Let $(\phi, H, J, u)$ be the unique  strong solution to (\ref{linear})-(\ref{bdy-Jl}) on $[0,T]\times \Omega$. Then for $0\leq t\leq T_1=\min\{T^{*},(1+c_3)^{-2}\}$, it holds
\begin{equation*}\label{phi-lem}
	\begin{split}
&\|\phi(t)\|_{\infty}+\|\phi(t)-\phi^{\infty}\|_3\leq Cc_0,\quad
\|\phi_t(t)\|\leq Cc_0c_1,\\[2mm]
&\quad \|\phi_t(t)\|_{D^1}\leq Cc_0c_2,\quad\|\phi_t(t)\|_{D^2}\leq Cc_0c_3.
\end{split}
\end{equation*}
\end{lem}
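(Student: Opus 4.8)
The plan is to treat $\eqref{linear}_1$ as a linear transport equation for the shifted unknown $\phi-\phi^{\infty}$, whose coefficient field $v$ is controlled by the hypotheses \eqref{known}. Writing the equation in the form
\[
(\phi-\phi^{\infty})_t+v\cdot\nabla(\phi-\phi^{\infty})+\tfrac{\gamma-1}{2}(\phi-\phi^{\infty})\,\mathrm{div}\,v=-\tfrac{\gamma-1}{2}\phi^{\infty}\,\mathrm{div}\,v,
\]
I would first derive the $H^3$ bound on $\phi-\phi^{\infty}$ by a Gronwall argument, then read off $\|\phi\|_{\infty}\le C\|\phi-\phi^{\infty}\|_3+\phi^{\infty}\le Cc_0$ from the embedding $H^3\hookrightarrow L^{\infty}$ together with $\phi^{\infty}\le c_0$ from \eqref{initial-0}, and finally recover the three bounds on $\phi_t$ by differentiating the equation algebraically. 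The lower bound on $\phi$ is not needed here, so only upper estimates are at stake.

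For the $H^3$ estimate I would apply $\nabla^k$, $k=0,1,2,3$, to the equation, pair with $\nabla^k(\phi-\phi^{\infty})$ and integrate. The transport term produces $-\tfrac12\int(\mathrm{div}\,v)|\nabla^k(\phi-\phi^{\infty})|^2$ after integration by parts, bounded by $\|\nabla v\|_{\infty}\|\phi-\phi^{\infty}\|_3^2$, while the commutators $[\nabla^k,v\cdot\nabla](\phi-\phi^{\infty})$ and the lower-order pieces of $\nabla^k((\phi-\phi^{\infty})\mathrm{div}\,v)$ are handled by Moser-type product estimates and the Gagliardo--Nirenberg inequalities of Lemma \ref{lem-2-1}, contributing a further $C\|v\|_3\|\phi-\phi^{\infty}\|_3^2$. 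The genuinely dangerous contributions are the two top-order terms $\phi\,\nabla^3\mathrm{div}\,v$ (from expanding the product) and the source $\phi^{\infty}\nabla^3\mathrm{div}\,v$, both of which carry $\nabla^4 v$ and are therefore only $L^2$ in time under \eqref{known}. This is the main obstacle, and it is exactly what dictates the choice $T_1=\min\{T^{*},(1+c_3)^{-2}\}$: after dividing by $\|\phi-\phi^{\infty}\|_3$ I obtain
\[
\frac{d}{dt}\|\phi-\phi^{\infty}\|_3\le C\|\nabla v\|_{\infty}\|\phi-\phi^{\infty}\|_3+C(c_0+\phi^{\infty})\|\nabla^4 v\|,
\]
and Cauchy--Schwarz in time gives $\int_0^t\|\nabla^4 v\|\,ds\le\sqrt{t}\,\big(\int_0^{T^{*}}\|\nabla^4 v\|^2\,ds\big)^{1/2}\le\sqrt{T_1}\,c_3\le 1$, while $\int_0^t\|\nabla v\|_{\infty}\,ds\le C\int_0^t\|v\|_3\,ds\le Cc_3T_1\le C$. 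Gronwall's inequality then yields $\|\phi-\phi^{\infty}\|_3\le(\|\phi_0-\phi^{\infty}\|_3+C(c_0+\phi^{\infty}))e^{C}\le Cc_0$.

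It remains to bound the time derivatives, which introduces no new structure. From $\eqref{linear}_1$ one has $\phi_t=-v\cdot\nabla\phi-\tfrac{\gamma-1}{2}\phi\,\mathrm{div}\,v$, so $\|\phi_t\|\le\|v\|_{L^6}\|\nabla\phi\|_{L^3}+C\|\phi\|_{\infty}\|\nabla v\|$; using $\|v\|_{L^6}\le C\|\nabla v\|\le Cc_1$, $\|\nabla\phi\|_{L^3}\le C\|\phi-\phi^{\infty}\|_3\le Cc_0$, and the $H^3$ bound just proved gives $\|\phi_t\|\le Cc_0c_1$. Differentiating once and twice more and estimating each factor by Lemma \ref{lem-2-1} together with the $\sup$-bounds $\|\nabla^2 v\|\le c_2$, $\|\nabla^3 v\|\le c_3$ from \eqref{known} and $\|\phi-\phi^{\infty}\|_3\le Cc_0$ yields $\|\phi_t\|_{D^1}\le Cc_0c_2$ and $\|\phi_t\|_{D^2}\le Cc_0c_3$; in each case the decisive term is the one loading the most derivatives onto $v$, namely $\phi\,\nabla\mathrm{div}\,v$ and $\phi\,\nabla^2\mathrm{div}\,v$, whose $L^2$ norms are controlled by $\|\phi\|_{\infty}\|\nabla^2 v\|\le Cc_0c_2$ and $\|\phi\|_{\infty}\|\nabla^3 v\|\le Cc_0c_3$ respectively. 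This completes the proof.
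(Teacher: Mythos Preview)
Your argument is essentially the same as the paper's: energy estimates on the transport equation for $\phi-\phi^{\infty}$, a Gronwall inequality controlled by $\int_0^t\|v\|_4\,ds\le c_3\sqrt{t}\le 1$ on $[0,T_1]$, and then algebraic bounds on $\phi_t$ from the equation. One small slip: in your differential inequality the factor $c_0$ multiplying $\|\nabla^4 v\|$ should be $\|\phi-\phi^{\infty}\|_{L^\infty}\le C\|\phi-\phi^{\infty}\|_3$ (which is not yet known to be $\le Cc_0$), so that term belongs in the Gronwall exponential rather than the source---exactly as the paper does by writing $\exp\{C\int_0^t\|v\|_4\,ds\}$; with that correction the proof goes through unchanged.
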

\begin{proof}
Since
\begin{equation*}
	\int_0^t\|v\|_4ds\leq t^{\frac{1}{2}}\left(	\int_0^t\|v\|^2_4ds\right)^{\frac{1}{2}}\leq c_3t^{\frac{1}{2}}.
	\end{equation*}
Then,  the stand energy estimates for transport equations imply that
\begin{equation}
	\begin{split}
\|\phi(t)-\phi^{\infty}\|_3
&\leq \left(\|\phi_0-\phi^{\infty}\|_3+\phi^{\infty}\int_0^t\|\nabla v\|_3ds\right)\exp\{C\int_0^t\|v\|_4ds\}\nonumber\\[2mm]
&\leq Cc_0,
\end{split}
\end{equation}
for $0\leq t\leq T_1=\min\{T^{*},(1+c_3)^{-2}\}$. Moreover, from the equation $(\ref{linear})_1$, one has
\begin{align*}
&\|\phi_t\|^2\leq C\left(\|v\|^2_{L^6}\|\nabla\phi\|^2_{L^3}+\|\phi\|^2_{L^{\infty}}\|\mbox{div}v\|^2\right)\leq Cc^2_0c^2_1,\\[2mm]
&\|\phi_t\|^2_{D^1}\leq C\left(\|v\|^2_{L^{\infty}}\|\phi\|^2_{D^2}+\|\nabla v\|^2_{L^6}\|\nabla\phi\|^2_{L^3}+\|\phi\|^2_{L^{\infty}}\|v\|^2_{D^2}\right)
 \leq C c^2_0c^2_2,
\end{align*}
and
\begin{equation*}
\begin{split}
\|\phi_t\|^2_{D^2}&\leq C\left(\|v\|^2_{L^{\infty}}\|\phi\|^2_{D^3}+\|\nabla \phi\|^2_{L^3}\|\nabla^2 v\|^2_{L^6}
+\|\nabla^2 \phi\|^2_{L^3}\|\nabla v\|^2_{L^6}+\|\phi\|^2_{L^{\infty}}\|v\|^2_{D^3}\right)\nonumber\\[2mm]
&\leq C c^2_0c^2_3,
\end{split}
\end{equation*}
for $0\leq t\leq T_1$.
This completes the proof.
\end{proof}

Now we estimate $\psi$ by the stand energy estimates for positive symmetric hyperbolic
system, which will be employed to address the estimates for $J$ and $u$.
\begin{lem}\label{lem-psi}
Let $(\phi, H, J, u)$ be the unique strong solution to (\ref{linear})-(\ref{bdy-Jl}) on $[0,T]\times \Omega$. Then for $0\leq t\leq T_1$, it holds
\begin{equation*}\label{psi}
\|\psi(t)\|_{\infty}+\|\nabla \psi(t)\|_{1}\leq C c_0,\quad
\|\psi_t(t)\|\leq Cc_0c_2,\quad \|\nabla\psi_t(t)\|^2\leq Cc_0^2c^2_3.
\end{equation*}
\end{lem}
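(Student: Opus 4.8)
The plan is to treat the reformulated equation (\ref{linear2}) for $\psi$ as a linear positive symmetric hyperbolic system whose coefficients are determined by the known field $v$, and to run standard hyperbolic energy estimates. Since the far-field vacuum prevents $\psi$ itself from lying in $L^2(\Omega)$, I would not estimate $\|\psi\|$ directly; instead I estimate $\nabla\psi$ and $\nabla^2\psi$ in $L^2$ (that is, the $D^1\cap D^2$-norm), then recover $\|\psi\|_\infty$ from these via a Gagliardo--Nirenberg inequality, and obtain the two bounds on $\psi_t$ algebraically from $\psi_t=-(v\cdot\nabla)\psi-(\nabla v)^T\psi-\nabla\mbox{div}v$ rather than through a separate evolution argument.

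First I would differentiate (\ref{linear2}) once and twice in space, getting transport equations for $\nabla\psi$ and $\nabla^2\psi$ of the same type, with the commutators $[\nabla^k,\sum_l A_l\partial_l]\psi$ and the differentiated zeroth-order term $\nabla^k(B\psi)$ as lower-order forcing and with $\nabla^k\nabla\mbox{div}v$ as the top-order forcing. Testing the $\nabla^k\psi$ equation against $\nabla^k\psi$ in $L^2$ and using that each $A_l=v_l\mathbb{I}_3$ is symmetric, the transport term integrates by parts to $\tfrac12\int_\Omega(\mbox{div}v)\,|\nabla^k\psi|^2$, while the boundary contribution $\tfrac12\int_{\partial\Omega}(v\cdot n)|\nabla^k\psi|^2$ vanishes because $v\cdot n=0$ on $\partial\Omega$; note that $\psi$ carries no boundary condition of its own, so this is the only place the boundary enters and it causes no difficulty. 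The resulting differential inequality has the schematic form $\tfrac{d}{dt}\|\nabla^k\psi\|^2\le C\|\nabla v\|_{\infty}\|\nabla^k\psi\|^2+C\|\nabla^k\psi\|\,(\text{products of }\nabla^{\le k}v\text{ with }\nabla^{\le k}\psi)+C\|\nabla^k\psi\|\,\|\nabla^{k+1}\mbox{div}v\|$, and every factor is controlled through the a priori assumptions (\ref{known}) and Lemma \ref{lem-phi} together with the inequalities of Lemma \ref{lem-2-1}.

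The step needing care, which I expect to be the main obstacle, is the top-order forcing: for $k=1$ it is $\nabla^2\mbox{div}v\sim\nabla^3v$ and for $k=2$ it is $\nabla^3\mbox{div}v\sim\nabla^4v$, and these are controlled only in $L^2_t$ (via $\int_0^{T^*}\|\nabla^3v\|^2\le c_2^2$ and $\int_0^{T^*}\|\nabla^4v\|^2\le c_3^2$), not in $L^\infty_t$. After integrating in time, the corresponding term is bounded by $\sup_{[0,t]}\|\nabla^k\psi\|\int_0^t\|\nabla^{k+1}\mbox{div}v\|\,ds\le \sup_{[0,t]}\|\nabla^k\psi\|\,t^{1/2}c_3$, and here the smallness of the window $T_1=\min\{T^*,(1+c_3)^{-2}\}$ is decisive: since $t^{1/2}c_3\le c_3/(1+c_3)\le1$, a Young inequality lets me absorb $\tfrac12\sup_{[0,t]}\|\nabla^k\psi\|^2$ into the left-hand side while leaving a harmless remainder. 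Starting from $\|\nabla\psi_0\|+\|\nabla^2\psi_0\|\le c_0$ (from (\ref{initial-0})) and applying Gronwall's inequality, the transport and lower-order terms contribute only a factor $\exp\{C\int_0^t\|\nabla v\|_{\infty}\,ds\}$, which by (\ref{known}) and $t\le T_1$ is bounded by an absolute constant; this yields $\|\nabla\psi(t)\|_1\le Cc_0$.

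Finally I would close the remaining bounds. For $\|\psi\|_\infty$, the second inequality of Lemma \ref{lem-2-1} (with $q=r=6$) together with $\|\psi\|_{L^6}\le C\|\nabla\psi\|$ and $\|\nabla\psi\|_{L^6}\le C\|\nabla^2\psi\|$ gives $\|\psi\|_\infty\le C\|\nabla\psi\|^{1/2}\|\nabla^2\psi\|^{1/2}\le C\|\nabla\psi\|_1\le Cc_0$. For the time derivatives I take $L^2$-norms directly in $\psi_t=-(v\cdot\nabla)\psi-(\nabla v)^T\psi-\nabla\mbox{div}v$ and in its spatial gradient: the dominant terms $\nabla\mbox{div}v$ and $\nabla^2\mbox{div}v$ are controlled by $\|\nabla^2v\|\le c_2$ and $\|\nabla^3v\|\le c_3$, whereas the transport and zeroth-order products are estimated through $\|v\|_{\infty}$ and $\|\nabla v\|_{\infty}$ against the already established $\|\nabla\psi\|_1\le Cc_0$. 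This produces $\|\psi_t\|\le Cc_0c_2$ and $\|\nabla\psi_t\|^2\le Cc_0^2c_3^2$, completing the proof.
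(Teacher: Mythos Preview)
Your proposal is correct and follows essentially the same approach as the paper: both differentiate the symmetric hyperbolic system (\ref{linear2}), test against $\nabla^k\psi$, use $v\cdot n|_{\partial\Omega}=0$ to discard the boundary term, control the top-order forcing $\nabla^{k+1}\mbox{div}v$ through its $L^2_t$-bound together with the smallness $t^{1/2}c_3\le 1$ on $T_1$, apply Gronwall, then recover $\|\psi\|_\infty$ by Gagliardo--Nirenberg and read off $\psi_t,\nabla\psi_t$ algebraically from the equation. The only cosmetic difference is that the paper passes to the linear inequality $\tfrac{d}{dt}\|\nabla^k\psi\|\le\ldots$ before Gronwall, whereas you absorb via a sup--Young argument; the two devices are equivalent here.
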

\begin{proof}
Let $\xi=(\xi_1,\xi_2,\xi_3)^{T} (\xi_i=0,1,2) $ and $1\leq|\xi|\leq 2$.
Taking $\partial^{\xi}_{x}$ to equation (\ref{linear2}), one has
\begin{align*}
&\  (\partial^{\xi}_{x}\psi)_t+\sum_{k=1}^3A_k\partial_k\partial^{\xi}_{x}\psi+B\partial^{\xi}_{x}\psi+\partial^{\xi}_{x}\nabla\mbox{div}v\nonumber
\\[2mm]
&=\sum_{k=1}^3\left(A_k\partial_k\partial^{\xi}_{x}\psi-\partial^{\xi}_{x}(A_k\partial_k\psi)\right)
+\left(B\partial^{\xi}_{x}\psi-\partial^{\xi}_{x}(B\psi)\right)
\equiv R_1+R_2.
\end{align*}
Multiplying  the above equation  by  $\partial^{\xi}_{x}\psi$, and integrating by parts  with the boundary condition $v\cdot n|_\Omega=0$, we can infer that
\begin{align}\label{psi-1}
\frac{1}{2}\frac{d}{dt}\|\partial^{\xi}_{x}\psi\|^2\leq& \left(\sum_{k=1}^3\|\partial_{k}A_k\|_{L^{\infty}}+\|B\|_{L^{\infty}}\right)\|\partial^{\xi}_{x}\psi\|^2\nonumber\\[2mm]
&+\left(\|R_1\|+\|R_2\|+\|\partial^{\xi}_{x}\nabla^2v\|\right)\|\partial^{\xi}_{x}\psi\|.
\end{align}
When $|\xi|=1$, it is clear to have
\begin{align*}
\|R_1\|\leq
\|\partial_{x}A_k\partial_{k}\psi\|
\leq C\|\nabla v\|_{L^{\infty}} \|\nabla\psi\|,
\end{align*}
and
\begin{align*}
\|R_2\|\leq\|\partial_{x}B\psi\|
\leq C\|\nabla^2 v\|_{L^3}\|\psi\|_{L^{6}}.
\end{align*}
Putting the estimates of $\|R_1\|$ and  $\|R_2\|$ into (\ref{psi-1}), one obtains
\begin{align*}
\frac{1}{2}\frac{d}{dt}\|\nabla\psi\|^2\leq C \left(\|\nabla v\|_{L^{\infty}}+\|\nabla^2v\|_{L^3}\right)\|\nabla\psi\|^2
+\|\nabla ^3v\|  \|\nabla\psi\|,
\end{align*}
that is
\begin{align*}
\frac{1}{2}\frac{d}{dt}\|\nabla\psi\|\leq C \left(\|\nabla v\|_{L^{\infty}}+\|\nabla^2v\|_{L^3}\right)\|\nabla\psi\|
+\|\nabla ^3v\|.
\end{align*}
Then, by using the Gronwall's inequality,  for $0\leq t\leq T_1=\min\{T^{*},(1+c_3)^{-2}\}$, it implies
\begin{align}\label{psi-2}
\|\nabla\psi\|\leq \left(\|\nabla \psi_0\|+\int_{0}^t\|\nabla^3v\|ds\right)\exp(C\int_0^t\|\nabla v\|_2 ds)\leq Cc_0.
\end{align}
When $|\xi|=2$, we have
\begin{align*}
\|R_1\|&\leq
\|\partial_{x}A_k\partial_x\partial_{k}\psi+\partial^{2}_{x}A_k\partial_k\psi\|
\leq C\|\nabla v\|_{L^{\infty}}\|\nabla^2\psi\|+\|\nabla^2v\|_{L^3}\|\nabla\psi\|_{L^{6}}\\[2mm]
&\leq C\|\nabla v\|_2 \|\nabla^2\psi\|,
\end{align*}
and
\begin{align*}
\|R_2\|&\leq\|\partial^2_{x}B\psi+\partial_{x}B\partial_{x}\psi\|
\leq C\|\nabla^3 v\|\|\psi\|_{L^{\infty}}+\|\nabla^2v\|_{L^3}\|\nabla\psi\|_{L^{6}}\\[2mm]
&\leq C \|\nabla^3 v\|\|\nabla\psi\|^{1/2}\|\nabla^2\psi\|^{1/2}+\|\nabla^2v\|_{L^3}\|\nabla^2\psi\|.
\end{align*}
Adding  the above estimates  into (\ref{psi-1}) yields
\begin{align*}
\frac{1}{2}\frac{d}{dt}\|\nabla^2\psi\|^2\leq C \|\nabla v\|_3\|\nabla^2\psi\|^2+ \|\nabla ^3v\|\|\nabla\psi\|^{1/2}\|\nabla^2\psi\|^{3/2}
+\|\nabla ^4v\|  \|\nabla^2\psi\|,
\end{align*}
which, combined with Cauchy's inequality, suggests
\begin{align*}
\frac{1}{2}\frac{d}{dt}\|\nabla^2\psi\|\leq C \|\nabla v\|_3\|\nabla^2\psi\|+\|\nabla ^3v\|\|\nabla\psi\|+\|\nabla ^4v\|.
\end{align*}
Subsequently, employing the Gronwall's inequality and (\ref{psi-2}),  for $0\leq t\leq T_1=\min\{T^{*},(1+c_3)^{-2}\}$, the following holds
\begin{align}\label{psi-3}
\|\nabla^2\psi\|\leq \left(\|\nabla^2 \psi_0\|+\int_{0}^t\left(\|\nabla^3v\|\|\nabla\psi\|+\|\nabla^4 v\|\right)ds\right)\exp(C\int_0^t\|\nabla v\|_3 ds)\leq Cc_0.
\end{align}
Consequently, the application of the Gagliardo-Nirenberg inequality indicates that (\ref{psi-2}) and (\ref{psi-3}) imply
\begin{equation*}
\|\psi\|_{L^{\infty}}\leq Cc_0.
\end{equation*}
Moreover, the equation
\begin{equation*}
	\psi_t=-\nabla(v\cdot\psi)-\nabla\mbox{div}v,
	\end{equation*}
 yields
\begin{align*}
\|\psi_t\|\leq C\left(\|v\|_{L^{\infty}}\|\nabla\psi\|+\|\psi\|_{L^{\infty}}\|\nabla v\|+\|\nabla\mbox{div}v\|\right)\leq Cc_0c_2,
\end{align*}
\begin{align*}
\|\nabla\psi_t\|\leq C\left(\|v\|_{L^{\infty}}\|\nabla^2\psi\|+\|\psi\|_{L^{\infty}}\|\nabla^2 v\|
+\|\nabla v\|_{L^3}\|\nabla\psi\|_{L^6}+\|\nabla^3v\|\right)\leq Cc_0c_3,
\end{align*}
 for $0\leq t\leq T_1$. The proof  is completed.
\end{proof}

Now, we are going to get the estimates for $J$,
which are crucial for deriving the estimates of $u$ in Lemma \ref{lem-u}.  Since  that boundary conditions (\ref{bdy-Jl}) of $J$ contains itself and $\psi$,
it is imperative to carefully analyze the boundary term to establish the elliptic estimation of $J$. To facilitate the reader, the proof of the elliptic estimation of $J$ will be provided in the appendix (Appendix \ref{prop-J}).

\begin{lem}\label{lem-J}
Let $(\phi,u, H, J)$ be the unique strong solution to (\ref{linear})-(\ref{bdy-Jl}) on $[0,T]\times \Omega$. Then for $0\leq t\leq T_2=\min\{T^{*},(1+c_{3})^{-6}\}$, it holds
\begin{equation}
\|J\|^2+\int_{0}^{t}\|\nabla J\|^2ds\leq C c^2_0,\nonumber
\end{equation}
\begin{equation}
\|\nabla J\|^2+\int_0^t(\|J_t\|^2+\|\nabla^2J\|^2)ds\leq C c_0^4,\nonumber
\end{equation}
\begin{equation}
	\|J_t\|^2+\int_0^t(\|\nabla J_t\|^2+\|\nabla^{3}J\|^2)ds\leq C c_0^6,\quad	\|\nabla^2J\|^2\leq C c_1c_2c_0^6.\nonumber
\end{equation}
\end{lem}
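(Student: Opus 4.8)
The plan is to carry out energy estimates on the parabolic equation $(\ref{linear})_4$ at three successive levels ($L^2$, $H^1$, and the time-derivative level), closing each by Gronwall's inequality and upgrading spatial regularity through the elliptic estimate for $J$ proved in Appendix \ref{prop-J}. The essential structural device is to write $-\Delta J = \mbox{curl}\,\mbox{curl}\,J - \nabla\,\mbox{div}\,J$ and integrate by parts, so the Laplacian yields $\|\mbox{curl}\,J\|^2 + \|\mbox{div}\,J\|^2$ together with two boundary integrals. Since $J\cdot n = 0$ on $\partial\Omega$, the term from $\nabla\,\mbox{div}\,J$ contributes no boundary integral, while the one from $\mbox{curl}\,\mbox{curl}\,J$ is evaluated through the boundary condition $\mbox{curl}\,J\times n = -(\psi\cdot n)J$, producing $-\int_{\partial\Omega}(\psi\cdot n)|J|^2$. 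By Lemma \ref{lem-div-curl} and $J\cdot n=0$ one then recovers $\|\nabla J\|^2 \leq C(\|\mbox{div}\,J\|^2 + \|\mbox{curl}\,J\|^2 + \|J\|^2)$, which converts the dissipation into genuine $H^1$ control.

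For the lowest-order estimate I would multiply $(\ref{linear})_4$ by $J$ and integrate. The convective term $-\int(v\cdot\nabla J)\cdot J$ becomes $\frac{1}{2}\int(\mbox{div}\,v)|J|^2$ after integration by parts, the boundary contribution dropping because $v\cdot n=0$; the source terms $J\cdot\nabla v$, $\eta J\,\mbox{div}\,\psi$, $\eta J\psi^2$, $2\eta\psi\cdot\nabla J$ are bounded using $\|\psi\|_{L^\infty} + \|\nabla\psi\|_1 \leq Cc_0$ from Lemma \ref{lem-psi} and the known bounds \eqref{known} on $v$. The delicate boundary integral $\int_{\partial\Omega}(\psi\cdot n)|J|^2$ is dominated, via $\|\psi\|_{L^\infty}\leq Cc_0$, the trace theorem and the interpolation $|J|_{L^2(\partial\Omega)}^2 \leq \epsilon\|\nabla J\|^2 + C_\epsilon\|J\|^2$, by $Cc_0(\epsilon\|\nabla J\|^2 + C_\epsilon\|J\|^2)$; taking $\epsilon$ small absorbs the gradient into the dissipation, and Gronwall over $[0,T_2]$ gives $\|J\|^2 + \int_0^t\|\nabla J\|^2\,ds \leq Cc_0^2$.

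For the second estimate I would multiply $(\ref{linear})_4$ by $J_t$. Using the div-curl identity and $J_t\cdot n = 0$ (from differentiating $J\cdot n=0$ in time), the principal part is $\|J_t\|^2 + \frac{\eta}{2}\frac{d}{dt}(\|\mbox{curl}\,J\|^2 + \|\mbox{div}\,J\|^2)$ plus the boundary contribution $-\eta\int_{\partial\Omega}(\psi\cdot n)J\cdot J_t$, which I rewrite as $-\frac{\eta}{2}\frac{d}{dt}\int_{\partial\Omega}(\psi\cdot n)|J|^2 + \frac{\eta}{2}\int_{\partial\Omega}(\psi_t\cdot n)|J|^2$ to form a modified energy; the remaining boundary term is handled by the trace bound on $\psi_t$ from Lemma \ref{lem-psi}. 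After bounding $\int F\cdot J_t$ by Cauchy-Schwarz and absorbing a small multiple of $\|J_t\|^2$, Gronwall yields $\|\nabla J\|^2 + \int_0^t(\|J_t\|^2 + \|\nabla^2 J\|^2)\,ds \leq Cc_0^4$, where $\|\nabla^2 J\|$ is obtained by feeding $-\eta\Delta J = -J_t + F$ into the elliptic estimate of Appendix \ref{prop-J}.

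The third, highest-order estimate is the main obstacle. I would differentiate $(\ref{linear})_4$ in time to get $J_{tt} - \eta\Delta J_t = F_t$ and test against $J_t$. The boundary condition for $J_t$ is $\mbox{curl}\,J_t\times n = -(\psi_t\cdot n)J - (\psi\cdot n)J_t$, so the boundary integral splits into $-\eta\int_{\partial\Omega}(\psi\cdot n)|J_t|^2$ (absorbable as before) and the genuinely new term $-\eta\int_{\partial\Omega}(\psi_t\cdot n)(J\cdot J_t)$. This last term is the hard part: $\psi_t$ only enjoys $\|\psi_t\|_1 \leq Cc_0c_3$, so I would estimate it by $|\psi_t|_{L^2(\partial\Omega)}|J|_{L^4(\partial\Omega)}|J_t|_{L^4(\partial\Omega)}$, apply the trace theorem with the interpolation $|w|_{L^4(\partial\Omega)}\lesssim \|w\|^{1/2}\|w\|_1^{1/2}$, and then use Young's inequality to peel off a small fraction of $\|\nabla J_t\|^2$ while leaving factors controlled by the lower-order bounds already obtained. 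Combined with a Cauchy-Schwarz estimate of $\int F_t\cdot J_t$ (using the $v_t$ and $\psi_t$ bounds) and Gronwall, this gives $\|J_t\|^2 + \int_0^t(\|\nabla J_t\|^2 + \|\nabla^3 J\|^2)\,ds \leq Cc_0^6$, the $\|\nabla^3 J\|$ term again coming from Appendix \ref{prop-J} applied to $-\eta\Delta J = -J_t + F$ differentiated once in space. The pointwise bound $\|\nabla^2 J\|^2 \leq Cc_1 c_2 c_0^6$ then follows by applying the elliptic estimate at each fixed time and inserting the just-obtained bound for $\|J_t\|$ together with the $v$-norms controlling $\|F\|$. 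Throughout, the smallness of $T_2 = \min\{T^*,(1+c_3)^{-6}\}$ is used to make the time integrals of the coefficients (e.g. $\int_0^t\|v\|_4\,ds \leq c_3 t^{1/2}$) bounded by absolute constants, keeping the Gronwall factors under control.
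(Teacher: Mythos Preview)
Your three-step scheme is correct and matches the paper's architecture; the difference lies entirely in how you treat the boundary integrals. The paper never invokes a trace theorem for these terms: at every level it writes $\int_{\partial\Omega}(\psi\cdot n)\,g\,dS=\int_{\Omega}\mbox{div}(\psi\,g)\,dx$ via the divergence theorem and then estimates the resulting \emph{volume} integrals (e.g.\ (\ref{3}), (\ref{J1}), (\ref{3t})). In Step~2 this produces a dangerous term $-\int_{\Omega}\psi\cdot\nabla J_t\cdot J$, which the paper absorbs into a modified energy $\frac{d}{dt}\int_{\Omega}\psi\cdot\nabla J\cdot J$; your device of writing the boundary integral itself as $-\frac{\eta}{2}\frac{d}{dt}\int_{\partial\Omega}(\psi\cdot n)|J|^2+\frac{\eta}{2}\int_{\partial\Omega}(\psi_t\cdot n)|J|^2$ achieves the same effect more directly. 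In Step~3 the paper again expands $\int_{\partial\Omega}(\psi_t\cdot n)J\cdot J_t$ and $\int_{\partial\Omega}(\psi\cdot n)|J_t|^2$ as volume divergences, while you estimate them on the boundary.

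One caution: the interpolation you invoke, $|w|_{L^4(\partial\Omega)}\lesssim\|w\|^{1/2}\|w\|_1^{1/2}$, is not the standard trace bound in three dimensions (functions in $L^2(\Omega)$ have no trace). What is safe is $|w|_{L^4(\partial\Omega)}\le C\|w\|_{H^1(\Omega)}$ via $H^1(\Omega)\hookrightarrow H^{1/2}(\partial\Omega)\hookrightarrow L^4(\partial\Omega)$; with this you get $\big|\int_{\partial\Omega}(\psi_t\cdot n)J\cdot J_t\big|\le C\|\psi_t\|_1\|J\|_1\|J_t\|_1\le\varepsilon\|\nabla J_t\|^2+C\|J_t\|^2+C\|\psi_t\|_1^2\|J\|_1^2$, and since $\|\psi_t\|_1^2\|J\|_1^2\le Cc_0^6c_3^2$ and $T_2=(1+c_3)^{-6}$, the time integral is $\le Cc_0^6$, so Gronwall still closes with the stated bound. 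The paper's divergence-theorem route is cleaner because it sidesteps trace estimates altogether, but your approach is a legitimate alternative once the interpolation is stated correctly.
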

\begin{proof}
We recall the linear equation $(\ref{linear})_4$ of $J$ and identify the right side as $F$, namely,
\begin{align}\label{equ-J}
J_t-\eta\Delta J=-v\cdot\nabla J+J\cdot\nabla v+\eta J\mbox{div}\psi+\eta J\psi^2+2\eta \psi\cdot \nabla J\equiv F.
\end{align}
The proof of this lemma will be finished by three steps.

{\it Step 1:} We aim to find $\|J\|$ firstly. By multiplying the equation (\ref{equ-J}) by $J$, and integrating by parts, one obtains
\begin{align}\label{2}
\frac{1}{2}\frac{d}{dt}\|J\|^2+\eta\|\mbox{curl}J\|^2+\eta\|\mbox{div}J\|^2=\eta\int_{\partial\Omega}(\mbox{curl}J\times n)\cdot JdS+\int_{\Omega}F\cdot Jdx,
\end{align}
where we have used the following identity
\begin{align*}
-\eta\int_{\Omega}\Delta J\cdot Jdx&=\eta\int_{\Omega}\mbox{curl}^2 J\cdot Jdx-\eta\int_{\Omega}\nabla
\mbox{div} J\cdot Jdx\\[2mm]
&=\eta\int_{\Omega}|\mbox{curl}J|^2dx+\eta\int_{\Omega}|\mbox{div}J|^2dx-\eta\int_{\partial\Omega}(\mbox{curl}J\times n)\cdot JdS,
\end{align*}
thanks to $J\cdot n|_{\partial\Omega}=0$.

 By using the boundary condition (\ref{bdy-Jl}), the boundary term on \eqref{2} could be handled as follows,
\begin{align}\label{3}
\int_{\partial\Omega}(\mbox{curl}J\times n)\cdot JdS&=-\int_{\partial\Omega}(\psi \cdot n)|J|^2dS\nonumber\\[2mm]
&=-\int_{\Omega}\mbox{div}\left(\psi|J|^2\right)dx\nonumber\\[2mm]
&=-\int_{\Omega}|J|^2\mbox{div}\psi dx-2\int_{\Omega}\psi\cdot\nabla J\cdot J dx\nonumber\\[2mm]
&\leq\|J\|_{L^6}\|J\|_{L^2}\|\mbox{div}\psi\|_{L^3}+\|\nabla J\|_{L^2}\|J\|_{L^2}\|\psi\|_{L^{\infty}}\nonumber\\[2mm]
&\leq\varepsilon\|\nabla J\|^2+C\|J\|^2\left(\|\mbox{div}\psi\|^2_{L^3}+\|\psi\|^2_{L^{\infty}}\right).
\end{align}
We reformulate that last term on the right-hand side of \eqref{2} as
\begin{align}
\int_{\Omega}F\cdot J dx= \int_{\Omega}\left( -v\cdot\nabla J+J\cdot\nabla v+\eta J\mbox{div}\psi+\eta J\psi^2+2\eta \psi\cdot \nabla J\right)\cdot J dx\equiv \sum_{i=1}^5I_i,
\end{align}
and  estimate these terms individually as follows:
\begin{align}\label{I1}
&I_1\equiv-\int_{\Omega}v\cdot\nabla J\cdot Jdx\leq \|v\|_{L^{\infty}}\|\nabla J\|\|J\|\leq \varepsilon \|\nabla J\|^2+C(\varepsilon)\|v\|^2_{L^{\infty}}\|J\|^2,\\[2mm]
&I_2\equiv\int_{\Omega}J\cdot\nabla v\cdot J dx\leq \|J\|_{L^6}\|\nabla v\|_{L^3}\|J\|
\leq \varepsilon \|\nabla J\|^2+C(\varepsilon)\|\nabla v\|^2_{L^3}\|J\|^2,\\[2mm]
&I_3\equiv\int_{\Omega}\eta J\mbox{div}\psi\cdot J dx\leq C \|J\|_{L^6}\|\mbox{div}\psi\|_{L^3}\|J\|
\leq \varepsilon \|\nabla J\|^2+C(\varepsilon)\|\mbox{div}\psi\|^2_{L^3}\|J\|^2,\\[2mm]
&I_4\equiv\int_{\Omega}\eta\psi^2 J\cdot J dx\leq C \|\psi\|^2_{L^{\infty}}\|J\|^2,\\[2mm]
&I_5\equiv\int_{\Omega}2\eta \psi\cdot \nabla J\cdot J dx \leq C \|\psi\|_{L^{\infty}} \|\nabla J\|\|J\|
\leq \varepsilon \|\nabla J\|^2+C(\varepsilon)\|\psi\|^2_{L^{\infty}}\|J\|^2.
\end{align}
Consequently, by substituting above into (\ref{2}), employing (\ref{curl-div}) for $J$, and selecting  $\varepsilon$ small, one derives
\begin{equation*}
		\frac{d}{dt}\|J\|^2+\|\nabla J\|^2	\leq C \left(\|\mbox{div}\psi\|^2_{L^3}+\|\psi\|^2_{L^{\infty}}+\|v\|^2_{L^{\infty}}+\|\nabla v\|^2_{L^3}+1\right) \|J\|^2,
\end{equation*}
 which, together with Gronwall's inequality, yields for $0\leq t\leq T_1= \min\{T^{*},(1+c_{3})^{-2}\}$, that
\begin{align}\label{p-H-2}
	\|J(t)\|^2+\int_0^t\|\nabla J(s)\|^2ds\leq& C\|J_0\|^2
	 \exp\left\{C\int_{0}^{t}\left(\|\mbox{div}\psi\|^2_{L^3}+\|\psi\|^2_{L^{\infty}}+\|v\|^2_{L^{\infty}}+\|\nabla v\|^2_{L^3}+1\right)ds\right\}\nonumber\\[2mm]
	\leq &C c_0^2,
	\end{align}
where Lemma \ref{lem-psi} has been utilized.

{\it Step 2:} We now  estimate $\|\nabla J\|$. By multiplying the equation (\ref{equ-J}) by $J_t$, integrating it over $\Omega$, and observing that
\begin{align*}
-\eta\int_{\Omega}\Delta J\cdot J_tdx&=\eta\int_{\Omega}\mbox{curl}^2 J\cdot J_tdx-\eta\int_{\Omega}\nabla
\mbox{div} J\cdot J_tdx\\[2mm]
&=\frac{\eta}{2}\frac{d}{dt}\left(\int_{\Omega}|\mbox{curl}J|^2dx+\eta\int_{\Omega}|\mbox{div}J|^2dx\right)-\eta\int_{\partial\Omega}(\mbox{curl}J\times n)\cdot J_tdS,
\end{align*}
then the  following holds
\begin{equation}\label{nablaJ}
\frac{\eta}{2}\frac{d}{dt}\left(\|\mbox{curl}J\|^2+\|\mbox{div}J\|^2\right)+\|J_t\|^2-\eta\int_{\partial\Omega}(\mbox{curl}J\times n)\cdot J_{t}dS=\int_{\Omega}F\cdot J_{t}dx.
\end{equation}
By using the boundary condition (\ref{bdy-Jl}) again, we have
\begin{align}\label{J1}
&\int_{\partial\Omega}(\mbox{curl}J\times n)\cdot J_tdS=-\int_{\partial\Omega}(\psi \cdot n)J \cdot J_tdS\nonumber\\[2mm]
&=-\int_{\Omega}\mbox{div}\left(\psi (J \cdot J_t)\right)dx\nonumber\\[2mm]
&=-\int_{\Omega}\mbox{div}\psi J \cdot J_t dx-\int_{\Omega}\psi\cdot \nabla J \cdot J_t dx
-\int_{\Omega}\psi \cdot\nabla J_t\cdot J dx\nonumber\\[2mm]
&=-\int_{\Omega}\mbox{div}\psi J \cdot J_tdx
-\frac{d}{dt}\int_{\Omega}\psi \cdot\nabla J\cdot Jdx+\int_{\Omega}\psi_t \cdot\nabla J\cdot J dx \nonumber\\[2mm]
&\leq -\frac{d}{dt}\int_{\Omega}\psi \cdot\nabla J\cdot Jdx +\varepsilon\|J_t\|^2+C\|\mbox{div}\psi\|^2_{L^3}\| J\|^2_{L^6}+\|\psi_t\|_{L^3}\|\nabla J\|\|J\|_{L^6}\nonumber\\[2mm]
&\leq -\frac{d}{dt}\int_{\Omega}\psi \cdot\nabla J\cdot Jdx +\varepsilon\|J_t\|^2+C\|\mbox{div}\psi\|^2_{L^3}\| \nabla J\|^2+\|\psi_t\|_{L^3}\|\nabla J\|^2.
\end{align}
Utilizing Cauchy's inequality, it is readily apparent
\begin{align}\label{J2}
\int_{\Omega} F\cdot J_tdx&\leq \frac{1}{4} \|J_t\|^2+C\|F\|^2\nonumber\\[2mm]
&\leq \frac{1}{4} \|J_t\|^2+C\left(\|v\cdot\nabla J\|^2+\|J\cdot\nabla v\|^2+\|J\mbox{div}\psi\|^2+\| J\psi^2\|^2+\| \psi\cdot \nabla J\|^2\right)\nonumber\\[2mm]
&\leq \frac{1}{4} \|J_t\|^2+C\left(\|v\|^2_{L^{\infty}}+\|\nabla v\|_{L^3}^2+\|\mbox{div}\psi\|_{L^3}^2+\|\psi\|_{L^6}^4+\| \psi\|^2_{L^{\infty}} \right)\|\nabla J\|^2.
\end{align}
Putting (\ref{J1}) and (\ref{J2}) into (\ref{nablaJ}), choosing $\varepsilon$ small enough, it yields that
\begin{align}\label{4}
&\eta\frac{d}{dt}\left(\|\mbox{curl}J\|^2+\|\mbox{div}J\|^2\right)+2\eta\frac{d}{dt}\int_{\Omega}\psi \cdot\nabla J\cdot Jdx+\|J_t\|^2\nonumber\\[2mm]
&\leq C\left(\|v\|^2_{L^{\infty}}+\|\nabla v\|_{L^3}^2+\|\mbox{div}\psi\|_{L^3}^2+\|\psi\|_{L^6}^4+\| \psi\|^2_{L^{\infty}} +\|\psi_t\|_{L^3} \right)\|\nabla J\|^2.
 \end{align}
Integrating (\ref{4}) over $s\in(0,t)$, $0\leq t\leq \widetilde{T_2}= \min\{T^{*},(1+c_{3})^{-4}\}$, and using  Young's inequality,  invoking Lemma \ref{lem-psi},  it implies
\begin{align}
\|&\mbox{curl}J(t)\|^2+\|\mbox{div}J(t)\|^2+\int_0^t\|J_t(s)\|^2ds\nonumber\\[2mm]
\leq& C\|\nabla J_0\|^2-2\eta\int_{\Omega}(\psi \cdot\nabla J\cdot J)(t)dx +2\eta\int_{\Omega}\psi_0 \cdot\nabla J_0\cdot J_0dx\nonumber\\[2mm]
&+C(c_1c_2+c_0^2+c_0^4 +c_0c_3)\int_0^t\|\nabla J(s)\|^2ds\nonumber\\[2mm]
\leq& C\|\nabla J_0\|^2+\varepsilon\|\nabla J\|^2+C(\varepsilon)\|\psi\|^2_{L^{\infty}}\|J\|^2
+C(c_1c_2+c_0^2+c_0^4 +c_0c_3)\int_0^t\|\nabla J(s)\|^2ds.\nonumber
\end{align}
Subsequently, by using (\ref{curl-div}) and (\ref{p-H-2}) and selecting  $\varepsilon$ small enough, it is demonstrated
\begin{align}
&\|\nabla J(t)\|^2+\int_0^t\|J_t(s)\|^2ds\nonumber\\[2mm]
 &\leq C\|J\|^2+ C\|\nabla J_0\|^2+C\|\psi\|^2_{L^{\infty}}\|J\|^2
+C(c_1c_2+c_0^2+c_0^4 +c_0c_3)\int_0^t\|\nabla J(s)\|^2ds\nonumber\\[2mm]
&\leq  Cc_0^4+C(c_1c_2+c_0^2+c_0^4 +c_0c_3)\int_0^t\|\nabla J(s)\|^2ds,\nonumber
\end{align}
which, along with the Gronwall's inequality, indicates immediately that
\begin{equation}\label{5}
\|\nabla J(t)\|^2+\int_0^t\|J_t(s)\|^2ds
\leq Cc_0^4,
\end{equation}
for  $0\leq t\leq \widetilde{T_2}= \min\{T^{*},(1+c_{3})^{-4}\}$.

Moreover, applying the elliptic estimate \eqref{2J} in Appendix \ref{prop-J} to $J$,  for $0\leq t\leq \widetilde{T_2}$, one can show that
\begin{align}
\int_0^t\|\nabla^2J(s)\|^2ds
&\leq C\int_0^t \left(\|J_t\|^2+\|F\|^2+\| J\|_1^2+\|\psi\|^2_{L^{\infty}} \|J\|^2_1+\|\nabla\psi\|^2_{L^3}\|\nabla J\|^2\right)ds\nonumber\\[2mm]
&\leq Cc_0^4.\nonumber
\end{align}

{\it Step 3:} \ We would like to get the estimate of $\|\nabla^2J\|$. Differentiating $(\ref{equ-J})$ with respect to $t$, multiplying the resulting identity by $J_t$, and after integrating by parts, we obtain
\begin{align}\label{2t}
\frac{1}{2}\frac{d}{dt}\|J_t\|^2+\eta\|\mbox{curl}J_t\|^2+\eta\|\mbox{div}J_t\|^2-\eta\int_{\partial\Omega}(\mbox{curl}J_t\times n)\cdot J_tdS=\int_{\Omega}F_t\cdot J_tdx.
\end{align}
Recall  the definition of $J$ and the boundary condition (\ref{bdy-Jl}) of $J$ to have
\begin{equation}\label{bdy-Jt}
\mbox{curl}J_t\times n =-((\psi\cdot n)J)_t=- \psi\cdot n J_t - \psi_t\cdot n J ,  \hspace{4mm} \mbox{on} \hspace{3mm} \partial\Omega.
\end{equation}
 Then, the boundary term can be managed as
\begin{align}\label{3t}
\int_{\partial\Omega}&(\mbox{curl}J_t\times n)\cdot J_tdS\nonumber\\[2mm]
=&-\int_{\partial\Omega}(\psi \cdot n)|J_t|^2dS-\int_{\partial\Omega}(\psi_t \cdot n)J\cdot J_tdS\nonumber\\[2mm]
=&-\int_{\Omega}\mbox{div}\left(\psi|J_t|^2\right)dx-\int_{\Omega}\mbox{div}\left(\psi_t J\cdot J_t \right)dx\nonumber\\[2mm]
=&-\int_{\Omega}|J_t|^2\mbox{div}\psi dx-2\int_{\Omega}\psi\cdot\nabla J_t\cdot J_t dx
-\int_{\Omega}J\cdot J_t\mbox{div}\psi_t dx\nonumber\\[2mm]
&-\int_{\Omega}\psi_t\cdot\nabla J_t\cdot J dx-\int_{\Omega}\psi_t\cdot\nabla J\cdot J_t dx
\nonumber\\[2mm]
\leq&\varepsilon\|\nabla J_t\|^2+C\left(\|\mbox{div}\psi\|^2_{L^3}+\|\psi\|^2_{L^{\infty}}\right)\|J_t\|^2
+\|\psi_t\|^2_1\|J\|^2_{1}.
\end{align}
The right side of (\ref{2t}) can be restated as follows:
\begin{align}
\int_{\Omega}F_t\cdot J_tdx=&\int_{\Omega}(-v\cdot\nabla J+J\cdot\nabla v+\eta J\mbox{div}\psi+\eta J\psi^2+2\eta \psi\cdot \nabla J)_t\cdot J_tdx\nonumber\\[2mm]
=&\int_{\Omega}(-v_t\cdot\nabla J-v\cdot\nabla J_t+J_t\cdot\nabla v+J\cdot\nabla v_t+
\eta J_t\mbox{div}\psi+ \eta J\mbox{div}\psi_t\nonumber\\[2mm]
&+\eta J_t\psi^2+2\eta J\psi\psi_t
+2\eta \psi_t\cdot \nabla J
+2\eta \psi\cdot \nabla J_t)\cdot J_tdx \equiv \sum_{j=1}^{10}K_{j},\nonumber
\end{align}
where $K_j$ $(1\leq j\leq10)$ can be controlled as
\begin{align}
&K_1=-\int_{\Omega}v_t\cdot\nabla J\cdot J_tdx\leq \|v_t\|_{L^3}\|\nabla J\|\|J_t\|_{L^6}\leq
\varepsilon\|\nabla J_t\|^2+C\|v_t\|^2_{L^3}\|\nabla J\|^2,\nonumber\\[2mm]
&K_2=-\int_{\Omega}v\cdot\nabla J_t\cdot J_tdx\leq\varepsilon\|\nabla J_t\|^2+C\|v\|^2_{L^{\infty}} \| J_t\|^2,\nonumber\\[2mm]
&K_3=\int_{\Omega}J_t\cdot\nabla v\cdot J_tdx\leq \|\nabla v\|_{L^{\infty}}\|J_t\|^2, \nonumber\\[2mm]
&K_4=\int_{\Omega}J\cdot\nabla v_t\cdot J_tdx\leq  \|\nabla v_t\| \|J\|_{L^3}\|J_t\|_{L^6}\leq \varepsilon\|\nabla J_t\|^2+C\|\nabla v_t\|^2 \|J\|^2_{L^3},\nonumber\\[2mm]
&K_5=\eta\int_{\Omega} J_t\mbox{div}\psi\cdot J_tdx\leq\|\mbox{div}\psi\|_{L^3}\|J_t\|\|J_t\|_{L^6}\leq \varepsilon\|\nabla J_t\|^2+C\|\mbox{div}\psi\|^2_{L^3}\|J_t\|^2,\nonumber\\[2mm]
&K_6=\eta\int_{\Omega} J\mbox{div}\psi_t\cdot J_tdx\leq \|\mbox{div}\psi_t\|\|J\|_{L^3}\|J_t\|_{L^6}\leq \varepsilon\|\nabla J_t\|^2+C\|\mbox{div}\psi_t\|^2\|J\|^2_{L^3}, \nonumber\\[2mm]
&K_7=\eta\int_{\Omega}\psi^2 J_t \cdot J_tdx\leq \|\psi\|_{L^{\infty}}^2 \|J_t\|^2, \nonumber\\[2mm]
&K_8=2\eta\int_{\Omega}J\psi\psi_t \cdot J_tdx\leq\|\psi\|_{L^{\infty}}\|\psi_t\|_{L^3}\|J_t\|_{L^6}\|J\|\leq
 \varepsilon\|\nabla J_t\|^2+C\|\psi\|^2_{L^{\infty}}\|\psi_t\|^2_{L^3}\|J\|^2,
\nonumber\\[2mm]
&K_9=2\eta\int_{\Omega}\psi_t\cdot \nabla J\cdot J_tdx\leq \|\psi_t\|_{L^3}\|\nabla J\|\|J_t\|_{L^6} \leq
\varepsilon\|\nabla J_t\|^2+C\|\psi_t\|^2_{L^3}\|\nabla J\|^2,
\nonumber\\[2mm]
&K_{10}=2\eta\int_{\Omega} \psi\cdot \nabla J_t\cdot J_tdx\leq \|\psi\|_{L^{\infty}} \|\nabla J_t\|\|J_t\|\leq
 \varepsilon\|\nabla J_t\|^2+ C\|\psi\|^2_{L^{\infty}}\|J_t\|^2.\nonumber
\end{align}
Therefore, we obtain
\begin{align}\label{Ft}
\int_{\Omega}F_t\cdot J_tdx\leq& \varepsilon\|\nabla J_t\|^2
+C(\|v\|^2_{L^{\infty}}+\|\mbox{div}\psi\|^2_{L^3}+ \|\nabla v\|_{L^{\infty}}+\|\psi\|_{L^{\infty}}^2)\| J_t\|^2\nonumber\\[2mm]
&+C\left(\|v_t\|^2_{1}+\|\psi_t\|^2_{1}\right)\| J\|^2_{1}
+C\|\psi\|^2_{L^{\infty}}\|\psi_t\|^2_{L^3}\|J\|^2.
\end{align}
By substituting (\ref{3t}) and (\ref{Ft}) into (\ref{2t}), then applying (\ref{curl-div}) for $J_t$, one obtains
\begin{align}\label{Jt}
	\frac{d}{dt}\|J_t\|^2+\|\nabla J_t\|^2
	\leq& C\left(1+\|v\|^2_{3}+\|\mbox{div}\psi\|^2_{L^3}+\|\psi\|_{L^{\infty}}^2\right)\|J_t\|^2\nonumber\\[2mm]
&+C\left(\|v_t\|^2_{1}+\|\psi_t\|^2_{1}\right)\| J\|^2_{1}
+C\|\psi\|^2_{L^{\infty}}\|\psi_t\|^2_{L^3}\|J\|^2.
\end{align}
Thanks to
 \begin{equation*}
		\|J_t(0)\|^2\leq C(\|\nabla^2 J(0)\|^2+\|F(0)\|^2)\leq Cc_0^6,
\end{equation*}
then utilizing the Gronwall's inequality to (\ref{Jt}) implies
\begin{align}\label{p-H}
	\begin{split}
\|J_t(t)\|^2+\int_0^t\|\nabla J_t(s)\|^2ds\leq Cc_0^6
\end{split}
\end{align}
 for  $0\leq t\leq T_2= \min\{T^{*},(1+c_{3})^{-6}\}$.
Furthermore, employing \eqref{2J} in Appendix \ref{prop-J}, we obtain
\begin{align}
\|\nabla^2J(t)\|^2\leq C(\|J_t\|^2+\|F\|^2+\| J\|_1^2+\|\psi\|^2_{L^{\infty}} \|J\|^2_1+\|\nabla\psi\|^2_{L^3}\|\nabla J\|^2)\leq C c_1c_2c_0^6,  \nonumber
\end{align}
for $0\leq t\leq T_2$.
Finally, applying the elliptic estimate \eqref{3J} in Appendix \ref{prop-J}, it is obvious that
\begin{align}
\int_0^{t}\|\nabla^3J(s)\|^2ds\leq  C\int_0^t
\left(\|\nabla J_t\|^2+\|\nabla F\|^2+\|J\|^2_{2}+(\|\psi\|^2_{L^{\infty}}+\|\nabla\psi\|^2_{1})\|J\|^2_{2}\right)ds\leq Cc_0^6 \nonumber
\end{align}
for $0\leq t\leq T_2$.
This ends the proof.
\end{proof}

After that, we provide the estimations of the magnetic field $H$.

\begin{lem}\label{lem-H}
Let $(\phi, H, J, u)$ be the unique strong solution to (\ref{linear})-(\ref{bdy-Jl}) on $[0,T]\times \Omega$. Then for $0\leq t\leq T_2= \min\{T^{*},(1+c_{3})^{-6}\}$, it holds
\begin{equation*}
	\begin{split}
&\|H(t)\|^2_1+\int_0^t\left(\|\nabla H\|^2_1+\|H_t\|^2\right)ds\leq Cc_0^2,\nonumber\\[2mm]
&\|H_t(t)\|_1^2+\int_0^t\left(\|\nabla^3 H\|_1^2+\|\nabla H_t\|_1^2+\|H_{tt}\|^2\right)ds\leq C c_0^4,\nonumber\\[2mm]
&\|\nabla^2 H(t)\|^2 \leq Cc_1c_2c_0^2,\quad \|\nabla^3 H(t)\|^2\leq  Cc_1c_2^3c_0^2.
\end{split}
\end{equation*}
\end{lem}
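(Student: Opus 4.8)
The plan is to run the standard sequence of parabolic energy estimates on the linear equation $(\ref{linear})_3$, namely $H_t-\eta\Delta H=\mbox{curl}(v\times H)$, exploiting the crucial simplification that, unlike the boundary conditions for $J$ and $u$, the perfect-conductivity conditions $H\cdot n=0$, $\mbox{curl}H\times n=0$ are \emph{homogeneous}. Consequently, in the identity $-\eta\int_\Omega\Delta H\cdot H\,dx=\eta\|\mbox{curl}H\|^2+\eta\|\mbox{div}H\|^2-\eta\int_{\partial\Omega}(\mbox{curl}H\times n)\cdot H\,dS$ the boundary integral vanishes outright, and since $\mbox{div}H=0$ only the curl term survives. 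Throughout I would rewrite the source via the identity (\ref{fact1}), $\mbox{curl}(v\times H)=H\cdot\nabla v-v\cdot\nabla H-H\,\mbox{div}v$, and convert $L^2$ norms of $\nabla H$ into those of $\mbox{curl}H$ by (\ref{fact2}) (or, at higher order, by Lemma \ref{lem-div-curl} together with $\mbox{div}H=0$ and $H\cdot n=0$). The coefficient factors $\|v\|_{L^\infty}$, $\|\nabla v\|_{L^3}$, $\|\nabla v\|_{L^\infty}$ are all controlled by the hypothesis (\ref{known}) on $v$, while the powers of $c_0$ trace back to the initial bound (\ref{initial-0}).

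For the first line I would test the equation against $H$: the boundary term drops, the convective piece satisfies $-\int(v\cdot\nabla H)\cdot H=\tfrac12\int(\mbox{div}v)|H|^2$ after an integration by parts whose boundary contribution vanishes by $v\cdot n=0$, and the remaining two pieces are handled by H\"older. Absorbing $\|\nabla H\|^2$ on the left through (\ref{fact2}) and applying Gronwall over $[0,T_2]$ gives $\|H(t)\|^2+\int_0^t\|\nabla H\|^2\,ds\le Cc_0^2$. A second test against $H_t$ (again no boundary term) controls $\int_0^t\|H_t\|^2$ and $\sup_t\|\mbox{curl}H\|^2$, hence $\sup_t\|\nabla H\|^2$, after bounding the source by $\tfrac12\|H_t\|^2+C\|\mbox{curl}(v\times H)\|^2$ and invoking the first estimate. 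The remaining $\int_0^t\|\nabla^2H\|^2$ needed to complete $\int_0^t\|\nabla H\|_1^2$ is then supplied by the elliptic estimate applied to $-\eta\Delta H=\mbox{curl}(v\times H)-H_t$ viewed as a stationary problem.

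The second and third lines follow the same scheme one derivative higher. Differentiating the equation in $t$ gives $H_{tt}-\eta\Delta H_t=\mbox{curl}(v_t\times H)+\mbox{curl}(v\times H_t)$; because $\mbox{curl}H\times n=0$ holds for all $t$, so does $\mbox{curl}H_t\times n=0$, and the boundary term again vanishes when testing against $H_t$. Controlling the two source terms by Cauchy's inequality, using the already-established $\|\nabla H\|$ and $\|H_t(0)\|$ bounds (with $\|H_t(0)\|$ read off from the equation at $t=0$), and invoking Gronwall yields $\|H_t\|_1^2+\int_0^t(\|\nabla H_t\|_1^2+\|H_{tt}\|^2)\,ds\le Cc_0^4$. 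The purely spatial bounds $\|\nabla^2H\|^2\le Cc_1c_2c_0^2$, $\|\nabla^3H\|^2\le Cc_1c_2^3c_0^2$ and the top-order $\int_0^t\|\nabla^3H\|_1^2\,ds$ are then recovered elliptically: since $\mbox{div}H=0$, $H\cdot n=0$, $\mbox{curl}H\times n=0$, Lemma \ref{lem-div-curl} bounds $\|H\|_{s}$ by $\|\mbox{curl}H\|_{s-1}+\|H\|$, and the equation feeds $\|H_t\|_{s-2}$ and the $\nabla^{s-2}$-derivatives of $\mbox{curl}(v\times H)$ into the right-hand side.

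I expect the main obstacle to be bookkeeping rather than genuine analytic difficulty. Because the magnetic boundary conditions are homogeneous, the delicate boundary manipulations that complicated the $J$ estimates (cf. (\ref{J1}), (\ref{3t})) simply do not arise. The real care lies in estimating the source $\mbox{curl}(v\times H)$ and its space-time derivatives so that precisely the advertised powers emerge: for instance the factor $c_1c_2$ in $\|\nabla^2H\|^2$ comes from the Gagliardo-Nirenberg interpolation $\|\nabla v\|_{L^3}^2\lesssim\|\nabla v\|\,\|\nabla v\|_1\lesssim c_1c_2$ multiplied against $\|H\|_1^2\sim c_0^2$, while the $c_1c_2^3$ bound for $\|\nabla^3H\|^2$ follows by feeding $\|\nabla^2H\|^2\sim c_1c_2c_0^2$ together with the higher $v$-norms of (\ref{known}) through the $s=1$ case of Lemma \ref{lem-div-curl}. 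Keeping these factors mutually consistent, and checking that every time integral closes on the short interval $[0,T_2]$, is the only point demanding attention.
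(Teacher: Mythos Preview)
Your proposal is correct and follows exactly the approach the paper intends: the paper's own proof of this lemma is simply the remark that, since $H$ satisfies a linear parabolic equation with \emph{homogeneous} boundary conditions (\ref{boundary3}), the estimates follow from the standard energy argument, and the details are omitted. Your outline fills in precisely those omitted details, with the correct use of (\ref{fact1})--(\ref{fact2}), Lemma \ref{lem-div-curl}, and the bookkeeping of the constants $c_0,c_1,c_2,c_3$ via (\ref{known}).
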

\begin{proof} Since the magnetic field $H$ satisfies the parabolic equation with homogeneous boundary conditions, we can obtain a prior estimate by the standard energy estimates argument for parabolic equations. Thus we omit it here.
\end{proof}

At last, we would like to estimate the velocity  $u$.
\begin{lem}\label{lem-u}
Let $(\phi,u, H, J)$ be the unique strong solution to (\ref{linear})-(\ref{bdy-Jl}) on $[0,T]\times \Omega$. Then for $0\leq t\leq T_3=\min\{T^{*},(1+c_{3})^{-9}\}$, it holds out that
\begin{align}
		&\|u(t)\|^2_1+\int_0^t\left(\|\nabla u\|^2_1+\|u_t\|^2\right)ds\leq Cc_0^2,\nonumber\\[2mm]
&\|u_t(t)\|^2+\int_0^t\left(\|\nabla^3u\|^2+\|\nabla u_t\|^2\right)ds\leq C c_0^4,\nonumber\\[2mm]
		&\|\nabla u_t(t)\|^2+\int_0^t\left(\|\nabla^4u\|^2+\|\nabla^2 u_t\|^2+\|u_{tt}\|^2\right)ds\leq C c_0^4,\nonumber\\[2mm]
		&\|\nabla^2u(t)\|^2\leq Cc_0^4c_1^3c_2,\quad \|\nabla^3u(t)\|^2\leq Cc_0^4c_1c_2^3.\nonumber
		\end{align}
\end{lem}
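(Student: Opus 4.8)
The plan is to read the momentum equation $(\ref{linear})_2$ as the Lam\'e system (\ref{equ-u}) with right-hand side $g=G-u_t$, where
$$
G:=-v\cdot\nabla v-\tfrac{2A\gamma}{\gamma-1}\phi\nabla\phi+\psi\cdot Q(v)+J\cdot\nabla H-J\cdot(\nabla H)^{T}
$$
collects all source terms. Every factor in $G$ and its derivatives is already under control: $\phi$ by Lemma \ref{lem-phi}, $\psi$ by Lemma \ref{lem-psi}, $J$ by Lemma \ref{lem-J}, $H$ by Lemma \ref{lem-H}, and $v$ by the induction hypothesis (\ref{known}). I would then run energy estimates at four successive levels, each time exploiting the identity $\alpha\Delta u+(\alpha+\beta)\nabla\mbox{div}u=(2\alpha+\beta)\nabla\mbox{div}u-\alpha\,\mbox{curl}\,\mbox{curl}u$ together with the coercivity $\alpha>0$ and $2\alpha+\beta\geq\tfrac{4}{3}\alpha>0$ (which follows from (\ref{alpha})), and converting control of time derivatives into spatial regularity via the elliptic estimate of Lemma \ref{lem-elliptic-1}.

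\textbf{Low-order estimates.} First I would multiply $(\ref{linear})_2$ by $u$ and by $u_t$ and integrate over $\Omega$. Integrating the Lam\'e operator by parts produces the coercive quantities $\alpha\|\mbox{curl}u\|^2+(2\alpha+\beta)\|\mbox{div}u\|^2$ (the $\nabla\mbox{div}u$ boundary term vanishes since $u\cdot n=0$) plus the curl boundary integral $-\alpha\int_{\partial\Omega}(\mbox{curl}u\times n)\cdot u\,dS=\alpha\int_{\partial\Omega}K(x)u\cdot u\,dS$ coming from (\ref{boundary2}). This boundary term is absorbed through the trace inequality $|\int_{\partial\Omega}Ku\cdot u\,dS|\leq C\|u\|_{L^2(\partial\Omega)}^2\leq\varepsilon\|\nabla u\|^2+C_\varepsilon\|u\|^2$, the $\varepsilon\|\nabla u\|^2$ being reabsorbed via (\ref{curl-div}). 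Bounding $\int G\cdot u$ and $\int G\cdot u_t$ by Cauchy's inequality and applying Gronwall yields $\|u\|_1^2+\int_0^t\|u_t\|^2$, and Lemma \ref{lem-elliptic-1} with $s=0$ applied to $g=G-u_t$ recovers $\int_0^t\|\nabla^2u\|^2$, giving the first estimate.

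\textbf{Time-differentiated estimates.} Next I would differentiate $(\ref{linear})_2$ in $t$ to obtain $u_{tt}-(\alpha\Delta u_t+(\alpha+\beta)\nabla\mbox{div}u_t)=G_t$, noting that $u_t$ inherits $\mbox{curl}u_t\times n=-K(x)u_t$ because $K$ is time-independent. Multiplying by $u_t$ gives $\|u_t\|^2$ and $\int_0^t\|\nabla u_t\|^2$; multiplying by $u_{tt}$ gives $\|\nabla u_t\|^2$ and $\int_0^t\|u_{tt}\|^2$, where the boundary contribution is now the exact total derivative $\tfrac12\frac{d}{dt}\int_{\partial\Omega}Ku_t\cdot u_t\,dS$ thanks to the symmetry of $K$, which after integration in time is again handled by the trace inequality. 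The initial value is read off the equation, $\|u_t(0)\|\leq C(\|\nabla^2u_0\|+\|G(0)\|)\leq Cc_0^2$, and $\|G_t\|$ is estimated term by term using $\phi_t,\psi_t,J_t,H_t,v_t$ from Lemmas \ref{lem-phi}--\ref{lem-H} and (\ref{known}). Gronwall then yields the second and third estimates, with $\int_0^t\|\nabla^3u\|^2$, $\int_0^t\|\nabla^4u\|^2$ and $\int_0^t\|\nabla^2u_t\|^2$ supplied by Lemma \ref{lem-elliptic-1} with $s=1,2$ applied to $u$ and $s=0$ applied to $u_t$.

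Finally, the pointwise bounds $\|\nabla^2u(t)\|^2\leq Cc_0^4c_1^3c_2$ and $\|\nabla^3u(t)\|^2\leq Cc_0^4c_1c_2^3$ follow from Lemma \ref{lem-elliptic-1} ($s=0,1$) at each fixed time, inserting the established pointwise bounds on $\|u_t\|_1$ and on $\|G\|_1$; the latter, computed termwise, is where the extra $c_1,c_2$ factors enter (for instance $\|v\cdot\nabla v\|_1$ and $\|\psi\cdot Q(v)\|_1$ bring in $\|\nabla^2v\|,\|\nabla^3v\|$). The main obstacle I anticipate is the bookkeeping of the \emph{inhomogeneous} Navier-slip boundary terms: unlike the homogeneous magnetic case, $\mbox{curl}u\times n=-K(x)u$ does not vanish, so every integration by parts of the Lam\'e operator leaves a boundary integral that must be absorbed via the trace theorem, and in the $u_{tt}$-estimate one must recognize this integral as an exact time derivative to avoid losing a derivative on $u_t$. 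A secondary delicacy is that $\|G_t\|$ and $\|G\|_1$ carry high powers of $c_1,c_2,c_3$; these are compensated by restricting to the short interval $[0,T_3]$ with $T_3=\min\{T^{*},(1+c_3)^{-9}\}$, so that the time integrals of the large quantities remain bounded by the stated powers of $c_0$.
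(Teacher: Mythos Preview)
Your proposal is correct and follows essentially the same route as the paper's proof: four energy estimates (test with $u$, $u_t$, then the time-differentiated equation tested with $u_t$ and $u_{tt}$), handling the Navier-slip boundary integral by trace in the quadratic cases and as an exact time derivative $\tfrac12\tfrac{d}{dt}\int_{\partial\Omega}K|\,\cdot\,|^2\,dS$ in the cross cases, and recovering spatial regularity from Lemma~\ref{lem-elliptic-1}. The only point to flag is that in your low-order step the $u_t$-tested boundary term $\int_{\partial\Omega}Ku\cdot u_t\,dS$ should also be written as the exact derivative $\tfrac12\tfrac{d}{dt}\int_{\partial\Omega}K|u|^2\,dS$ (as the paper does) rather than estimated directly by trace, but this is a cosmetic refinement of what you already describe.
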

\begin{proof}
Recall the  linear equation of  $(\ref{linear})_2$ as
\begin{equation}\label{equ-u}
 u_{t}+\alpha\mbox{curl}^2 u-(2\alpha+\beta)\nabla\mbox{div}u
=-v\cdot\nabla v-\frac{2A\gamma}{\gamma-1}\phi\nabla\phi+\psi Q(v)+J\cdot\nabla H-J\cdot(\nabla H)^{T}\equiv N,
\end{equation}
with the boundary conditions:
\begin{equation}\label{boundary-u}
 	u\cdot n=0,\quad  \mbox{curl}u\times n=-K(x)u,\quad \mbox{on}\quad \partial\Omega.
 \end{equation}
{\it Step 1:} We will first calculate $\|u\|$.
Multiplying the equation (\ref{equ-u}) by $u$,  integrating the result identity over $\Omega$, and  performing integration by parts with the boundary conditions, it follows that
\begin{align}\label{1orderu}
\frac{1}{2}\frac{d}{dt}\|u\|^2+\alpha\|\mbox{curl}u\|^2+(2\alpha+\beta)\|\mbox{div}u\|^2
=-\alpha\int_{\partial\Omega}K(x)|u|^2dS+\int_{\Omega}N\cdot u dx.
\end{align}
Using the trace estimate, the boundary term in \eqref{1orderu} can be governed as
\begin{align}\label{trace1}
-\alpha\int_{\partial\Omega}K(x) |u|^2dS&\leq C\left(\|\nabla u\|\|u\|+\|u\|^2\right)\nonumber\\[2mm]
&\leq \varepsilon \|\nabla u\|^2 +C\|u\|^2.
\end{align}
By applying Cauchy's inequality, one possesses
\begin{equation}
\int_{\Omega}N\cdot u\leq \|u\|^2+\|N\|^2, \nonumber
\end{equation}
where
\begin{align}\label{N}
	\|N\|^2&\leq C\left(\|v\|^2_{L^{\infty}}\|\nabla v\|^2+\|\phi\|^2_{L^{\infty}}\|\nabla\phi\|^2+
	\|\psi\|^2_{L^{\infty}}\|Q(v)\|^2+\| J\|_{L^6}^2\|\nabla H\|_{L^3}^2\right)\nonumber\\[2mm]
	&\leq C \left(\|\nabla v\|^3\|\nabla^2 v\|+\|\nabla \phi\|^3\|\nabla^2 \phi\|+	 \|\psi\|^2_{L^{\infty}}\|\nabla v\|^2+\|\nabla J\|^2\|\nabla H\|_1^2\right).
	\end{align}
Substituting the above inequalities into (\ref{1orderu}) and applying (\ref{curl-div}) implies
	\begin{align}
		\frac{1}{2}\frac{d}{dt}\|u\|^2+\|\nabla u\|^2\leq C\|u\|^2+C\left(\|\nabla v\|^3\|\nabla^2 v\|+\|\nabla \phi\|^3\|\nabla^2 \phi\|+	 \|\psi\|^2_{L^{\infty}}\|\nabla v\|^2+\|\nabla J\|^2\|\nabla H\|_1^2\right).\nonumber
	\end{align}
By using the Gronwall's inequality, for $0\leq t\leq T_2= \min\{T^{*},(1+c_{3})^{-6}\}$, it is clear that
\begin{align}\label{8}
\|u(t)\|^2+\int_0^t\|\nabla u\|^2ds \leq C(\|u_0\|^2+c_0^4c_3^4t)\exp(Ct)\leq Cc_0^2.
\end{align}
{\it Step 2:} This procedure will yield the estimate $\|\nabla u\|$.
We multiply the equation (\ref{equ-u}) by \( u_t \), integrate the resulting identity over \( \Omega \), and apply integration by parts with respect to spatial variable to demonstrate
\begin{equation}
 	\begin{split}
	\frac{1}{2}\frac{d}{dt}\left(\alpha\|\mbox{curl} u\|^2+(2\alpha+\beta)\|\mbox{div}u\|^2\right)+\|u_t\|^2
&=\alpha\int_{\partial\Omega} (\mbox{curl}u\times n)\cdot u_tdS+\int_{\Omega}N\cdot u_t\nonumber\\[2mm]
&=-\alpha\int_{\partial\Omega} K(x)u \cdot u_tdS+\int_{\Omega}N\cdot u_t\nonumber\\[2mm]
&=-\frac{\alpha}{2}\frac{d}{dt}\int_{\partial\Omega} K(x)|u|^2dS+\int_{\Omega}N\cdot u_t\nonumber\\[2mm]
	&\leq -\frac{\alpha}{2}\frac{d}{dt}\int_{\partial\Omega} K(x)|u|^2dS+\frac{1}{2}\|u_t\|^2+C\|N\|^2,\nonumber
\end{split}
\end{equation}
which yields
\begin{equation}
 	\frac{d}{dt}\left(\alpha\|\mbox{curl} u\|^2+(2\alpha+\beta)\|\mbox{div}u\|^2\right)
+\alpha\frac{d}{dt}\int_{\partial\Omega} K(x)|u|^2dS+\|u_t\|^2\leq C\|N\|^2. \nonumber
\end{equation}
Integrating the above over $s\in[0,t]$, and using inequalities (\ref{curl-div}), (\ref{N}) and (\ref{8}), it follows
 \begin{align}\label{10}
 &\|\nabla u(t)\|^2+\int_{\partial\Omega} K(x)|u|^2dS+\int_0^t\|u_t\|^2ds\nonumber\\[2mm]
 &\leq  C\int_{\partial\Omega} K(x)|u(0)|^2dS+	\|\nabla u(0)\|^2+\|u(t)\|^2+C\|N\|^2t\nonumber\\[2mm]
 &\leq C\|u(0)\|_1^2+	\|u(t)\|^2+Cc_0^4c_3^4t\nonumber\\[2mm]
 &\leq Cc_0^2,
\end{align}
for $0\leq t\leq T_2= \min\{T^{*},(1+c_{3})^{-6}\}$. Moreover, \eqref{10} implies that
 \begin{align}\label{10-2}
 &\|\nabla u(t)\|^2+\int_0^t\|u_t\|^2ds\leq Cc_0^2.
\end{align}
where we have used \eqref{trace1} for the boundary integral. Furthermore, employing the classical elliptic estimate to $u$, we can find that
\begin{equation*}
	\begin{split}
	\int_0^t\|\nabla^2 u\|^2ds\leq C\int_0^t(\|u_t\|^2+\|N\|^2+\|\nabla u\|^2)ds
	\leq Cc_0^2.
	\end{split}
\end{equation*}

{\it Step 3:} The estimate $\|\nabla^2 u\|$ will be got in this step. Taking the derivative of (\ref{equ-u}) with respect to $t$, and multiplying the result by $u_{t}$,  integrating by parts with the boundary conditions $u_t\cdot n|_{\partial\Omega}=0$ and $\mbox{curl}u_t\times n=-K(x)u_t$, it results in
 \begin{align}\label{7}
&\frac{1}{2}\frac{d}{dt}\|u_t\|^2+\alpha\|\mbox{curl} u_t\|^2+(2\alpha+\beta)\|\mbox{div}u_t\|^2=-\alpha\int_{\partial\Omega}K(x) |u_t|^2dS+\int_{\Omega}N_t\cdot u_t.
\end{align}
Using the trace estimate,
it holds
\begin{equation}\label{bound-ut}
-\alpha\int_{\partial\Omega}K(x) |u_t|^2dS
\leq \varepsilon \|\nabla u_t\|^2 +C\|u_t\|^2.
\end{equation}
Employing H\"{o}lder's inequality, Cauchy's inequality and Sobolev inequalities, it is clear that
\begin{align}\label{u-1}
	\int_{\Omega}N_t\cdot u_t
\leq& \varepsilon\|\nabla u_t\|^2+C\|u_t\|^2+C\left(\| v\|_2^2\|v_t\|_1^2+\|\phi\|^2_{L^\infty}\|\phi_t\|^2+\|\psi\|_{L^{\infty}}^2\|\nabla v_t\|^2\right)\nonumber\\[2mm]
	&+C\left(\|\psi_t\|^2\|\nabla v\|_{L^3}^2+\| J_t\|^2\|\nabla H\|_{L^3}^2+\| J\|_{L^3}^2\|\nabla H_t\|^2\right),
	\end{align}
where we have used the fact
\begin{equation*}
	\begin{split}
	-\frac{2A\gamma}{\gamma-1}\int_{\Omega} (\phi\nabla\phi)_tu_t&=\frac{A\gamma}{\gamma-1}\int_{\Omega} (\phi^2)_t\mbox{div}u_t\leq C\|\phi_t\|\|\phi\|_{L^\infty}\|\nabla u_t\|\\[2mm]
	&\leq \varepsilon\|\nabla u_t\|^2+C\|\phi\|^2_{L^\infty}\|\phi_t\|^2.
	\end{split}
\end{equation*}
Then, adding \eqref{bound-ut}, \eqref{u-1} into \eqref{7} and  applying Lemma \ref{lem-div-curl}, choosing suitable small $\varepsilon$,  we have
 \begin{align}\label{u-2}
\frac{d}{dt}\|u_t\|^2+\|\nabla u_t\|^2
\leq& C\|u_t\|^2+C\left(\| v\|_2^2\|v_t\|_1^2+\|\phi\|^2_{L^\infty}\|\phi_t\|^2+\|\psi\|_{L^{\infty}}^2\|\nabla v_t\|^2\right)\nonumber\\[2mm]
	&+C\left(\|\psi_t\|^2\|\nabla v\|_{L^3}^2+\| J_t\|^2\|\nabla H\|_{L^3}^2+\| J\|_{L^3}^2\|\nabla H_t\|^2\right).
\end{align}
According to the equation $(\ref{linear})_2$, it is easy to see that
\begin{align}
\|u_t(0)\|^2\leq Cc_0^4.\nonumber
\end{align}
Thus, applying the Gronwall's inequality to \eqref{u-2},  it means
\begin{equation}\label{dut}
	\|u_t\|^2+\int_0^t\|\nabla u_t\|^2ds\leq Cc_0^4
\end{equation}
for $0\leq t\leq T_2= \min\{T^{*},(1+c_{3})^{-6}\}$. Moreover, the following inequalities are obtained
\begin{equation}
	\|\nabla^2 u\|^2\leq C(\|u_t\|^2+\|N\|^2+\|\nabla u\|^2)\leq C(c_0^4+c_0^4c_1^3c_2)\leq Cc_0^4c_1^3c_2,\nonumber
	\end{equation}
\begin{equation}
	\int_0^t	\|\nabla^3 u\|^2ds\leq C\int_0^t(\|\nabla u_t\|^2+\|\nabla N\|^2+\|\nabla u\|^2)ds
\leq Cc_0^4,\nonumber
	\end{equation}
by the classical elliptic estimate and
\begin{align}\label{1orderN}
	\|\nabla N\|^2\leq& C\left(\|\nabla v\|\|\nabla^2 v\|^3+\|\nabla \phi\|\|\nabla^2 \phi\|^3+\|\nabla\psi\|_1^2\|\nabla v\|_1^2\right)\nonumber\\
	&+\|\nabla J\|^2\left(\|\nabla H\|\|\nabla^2 H\|+\|\nabla ^2H\|\|\nabla^3 H\|\right)\nonumber\\[2mm]
	\leq &Cc_0^4c_1c_2^3.
	\end{align}\\
{\it Step 4:} \ Estimate on $\|\nabla^3u\|$. Taking $\int_{\Omega}\partial_{t}(\ref{equ-u})\cdot u_{tt}$, applying the boundary conditions for $u_t$, it is evident that
 \begin{equation}
 	\begin{split}
	\frac{1}{2}&\frac{d}{dt}\left(\alpha\|\mbox{curl} u_t\|^2+(2\alpha+\beta)\|\mbox{div}u_t\|^2\right)+\|u_{tt}\|^2
=\alpha\int_{\partial\Omega} (\mbox{curl}u_t\times n)\cdot u_{tt}dS+\int_{\Omega}N_t\cdot u_{tt}\nonumber\\[2mm]
&=-\alpha\int_{\partial\Omega} K(x)u_t \cdot u_{tt}+\int_{\Omega}N_t\cdot u_{tt}\nonumber\\[2mm]
&=-\frac{\alpha}{2}\frac{d}{dt}\int_{\partial\Omega} K(x)|u_t|^2dS+\int_{\Omega}N_t\cdot u_{tt}\nonumber\\[2mm]
	&\leq -\frac{\alpha}{2}\frac{d}{dt}\int_{\partial\Omega} K(x)|u_t|^2dS+\frac{1}{2}\|u_{tt}\|^2+C\|N_t\|^2,\nonumber
\end{split}
\end{equation}
which implies
\begin{equation}
 	\begin{split}
\frac{d}{dt}\left(\alpha\|\mbox{curl} u_t\|^2+(2\alpha+\beta)\|\mbox{div}u_t\|^2\right)+\alpha\frac{d}{dt}\int_{\partial\Omega} K(x)|u_t|^2dS+\|u_{tt}\|^2\leq C\|N_t\|^2.\nonumber
\end{split}
\end{equation}
Therefore, integrating the above inequality over $s\in[0,t]$ and using Lemma \ref{lem-div-curl} for $u_t$ with $u_t\cdot n|_{\partial\Omega}=0$ means that
\begin{align}\label{utt-1}
&\|\nabla u_t\|^2+\int_{\partial\Omega} K(x)|u_t|^2dS+\int_0^t\|u_{tt}\|^2ds\nonumber\\[2mm]
&\leq C \|\nabla u_t(0)\|^2+C\int_{\partial\Omega} K(x)|u_t(0)|^2dS+C\|u_t\|^2+C\int_{0}^t\|N_t\|^2ds.
\end{align}
We have
\begin{align}\label{initial-1ut}
\|\nabla u_t(0)\|^2\leq Cc_0^4
\end{align}
due to \eqref{equ-u}. For the nonlinear terms, it is easy to see that
\begin{align}\label{Nt}
\|N_t\|^2\leq&	\|v_t\|^2_{L^6}\|\nabla v\|^2_{L^{3}}+\|v\|^2_{L^{\infty}}\|\nabla v_t\|^2+\|\phi_t\|^2_{L^6}\|\nabla \phi\|^2_{L^3}
+\|\phi\|^2_{L^{\infty}}\|\nabla \phi_t\|^2\nonumber\\[2mm]
&+\|\psi_t\|^2_{L^6}\|\nabla v\|^2_{L^3}
+\|\psi\|^2_{L^{\infty}}\|\nabla v_t\|^2
+\|J_t\|^2\|\nabla H\|^2_{L^{\infty}}
+\|J\|^2_{L^{\infty}}\|\nabla H_t\|^2.
		\end{align}
Combining  \eqref{initial-1ut}, \eqref{Nt} with (\ref{utt-1}), and using \eqref{bound-ut}, it holds
\begin{equation}\label{utt-2}
\|\nabla u_t\|^2+\int_0^t\|u_{tt}\|^2ds
\leq Cc_0^4.
\end{equation}
for $0\leq t\leq \widetilde{T_3}=\min\{T^{*},(1+c_{3})^{-8}\}$. Next, the classical elliptic estimate induces that
\begin{equation}
	\|\nabla^3 u\|^2\leq C\left(\|\nabla u_t\|^2+\|\nabla N\|^2+\|\nabla u\|^2\right)\leq Cc_0^4c_1c_2^3.\nonumber
\end{equation}
Thanks to the classical elliptic estimate, using (\ref{dut}), (\ref{Nt}) and (\ref{utt-2}), we have
\begin{equation*}
	\begin{split}
		\int_0^t\|\nabla^2u_t\|^2ds\leq &C\int_0^t\left(\|u_{tt}\|^2+\|N_t\|^2+\|\nabla u_t\|^2\right)ds\\[2mm]
		\leq &C(c_0^4+c_0^8c_1c_2^2t)\leq Cc_0^4,
		\end{split}
	\end{equation*}
 for $0\leq t\leq \widetilde{T_3}=\min\{T^{*},(1+c_{3})^{-8}\}$, which together with
 \begin{equation*}
 	\begin{split}
 		\|\nabla^2N\|^2\leq C\left(\|\nabla v\|_2^4+\|\nabla \phi\|_2^4+\|\nabla^2 v\|_1^2\|\nabla\psi\|_1^2+\|\nabla J\|_1^2\|\nabla^2 H\|_1^2\right)\leq Cc_3^{13}.
 		\end{split}
 	\end{equation*}
yields that
\begin{equation}
	\begin{split}
		\int_0^t\|\nabla^4u\|^2ds\leq &C\int_0^t\left(\|\nabla^2u_{t}\|^2+\|\nabla^2N\|^2+\|\nabla u\|^2\right)ds\\[2mm]
		\leq &C(c_0^4+c_3^{13}t)\leq Cc_0^4
	\end{split}
\end{equation}
for $0\leq t\leq T_3=\min\{T^{*},(1+c_{3})^{-9}\}$. Therefore, the proof of Lemma \ref{lem-u}  is completed.
\end{proof}

Then combining the estimates obtained in Lemmas \ref{lem-phi}-\ref{lem-u}, we have
\begin{align*}
&\|\phi(t)\|^2_{\infty}+\|\phi(t)-\phi^{\infty}\|^2_3+\|\phi_t(t)\|^2_{2}
\leq Cc_0^2c_3^4,\\[2mm]
&\|\psi(t)\|^2_{\infty}+\|\nabla \psi(t)\|^2_{1}+\|\psi_t(t)\|^2_{1}\leq C c_0^2c_3^2,\\[2mm]
&\|H(t)\|^2_1+\int_0^t\left(\|\nabla H\|^2_1+\|H_t\|^2\right)ds\leq Cc_0^2,\nonumber\\[2mm]
&\|H_t(t)\|^2+\|\nabla^2 H(t)\|^2+\int_0^t\left(\|\nabla^3 H\|^2+\|\nabla H_t\|^2\right)ds\leq C c_1c_2c_0^2,\nonumber\\[2mm]
&\|\nabla H_t(t)\|^2+\|\nabla^3 H(t)\|^2+\int_0^t\left(\|\nabla^4 H\|^2+\|\nabla^2 H_t\|^2+\|H_{tt}\|^2\right)ds\leq Cc_1c_2^3c_0^2,\nonumber\\[2mm]
&\|J\|^2+\int_{0}^{t}\|\nabla J\|^2ds\leq C c^2_0,\nonumber\\[2mm]
&\|\nabla J\|^2+\int_0^t(\|J_t\|^2+\|\nabla^2J\|^2)ds\leq C c_0^4,\nonumber\\[2mm]
&\|J_t\|^2+\|\nabla^2J\|^2+\int_0^t(\|\nabla J_t\|^2+\|\nabla^3 J\|^2)ds\leq Cc_1c_2 c_0^6,\nonumber\\[2mm]
&\|u(t)\|^2_1+\int_0^t\left(\|\nabla u\|^2_1+\|u_t\|^2\right)ds\leq Cc_0^2,\\[2mm]
		&\|u_t(t)\|^2+\|\nabla^2u(t)\|^2+\int_0^t\left(\|\nabla^3u\|^2+\|\nabla u_t\|^2\right)ds\leq C c_0^4c_1^3c_2,\\[2mm]
		&\|\nabla u_t(t)\|^2+\|\nabla^3u(t)\|^2+\int_0^t\left(\|\nabla^4u\|^2+\|\nabla^2 u_t\|^2+\|u_{tt}\|^2\right)ds\leq Cc_0^4c_1c_2^3,
\end{align*}
for $0\leq t\leq T_3=\min\{T^{*},(1+c_{3})^{-9}\}$. Therefore, if we define the time
and constants as
\begin{align}\label{constant}
	&c_1=C^{\frac{1}{2}}c_0,\quad c_2=Cc_0^4c_1^3=C^{\frac{5}{2}}c_0^7,\nonumber\\ &c_3=C^{\frac{1}{2}}c_0^2c_1^{\frac{1}{2}}c_2^{\frac{3}{2}}=C^{\frac{9}{2}}c_0^{13},
	\quad T^*=\min\{T, (1+c_3)^{-9}\},
	\end{align}
then  we can deduce that
\begin{align}\label{local-linear}
&\sup_{0\leq t\leq T^*}\|u(t)\|^2_1+\int_0^t\left(\|\nabla u\|^2_1+\|u_t\|^2\right)ds\leq c_1^2,\nonumber\\[2mm]
		&\sup_{0\leq t\leq T^*}\left(\|u_t(t)\|^2+\|\nabla^2u(t)\|^2\right)+\int_0^t\left(\|\nabla^3u\|^2+\|\nabla u_t\|^2\right)ds\leq c_2^2,\nonumber\\[2mm]
		&\sup_{0\leq t\leq T^*}\left(\|\nabla u_t(t)\|^2+\|\nabla^3u(t)\|^2\right)+\int_0^t\left(\|\nabla^4u\|^2+\|\nabla^2 u_t\|^2+\|u_{tt}\|^2\right)ds\leq c_3^2,\nonumber\\[2mm]
&\sup_{0\leq t\leq T^*}\left(\|\phi(t)\|^2_{\infty}+\|\phi(t)-\phi^{\infty}\|^2_3+\|\phi_t\|_2^2\right)
\leq c_3^6,\nonumber\\[2mm]
&\sup_{0\leq t\leq T^*}\left(\|\psi(t)\|^2_{\infty}+\|\nabla\psi(t)\|^2_{1}+\|\psi_t\|_1^2\right)
\leq c_3^4,\nonumber\\[2mm]
&\sup_{0\leq t\leq T^*}\|H(t)\|^2_1+\int_0^t\left(\|\nabla H\|^2_1+\|H_t\|^2\right)ds\leq c_1^2,\nonumber\\[2mm]
&\sup_{0\leq t\leq T^*}\left(\|H_t(t)\|^2+\|\nabla^2 H(t)\|^2\right)+\int_0^t\left(\|\nabla^3 H\|^2+\|\nabla H_t\|^2\right)ds\leq c_2^2,\nonumber\\[2mm]
&\sup_{0\leq t\leq T^*}\left(\|\nabla H_t(t)\|^2+\|\nabla^3 H(t)\|^2\right)+\int_0^t\left(\|\nabla^4 H\|^2+\|\nabla^2 H_t\|^2+\|H_{tt}\|^2\right)ds\leq c_3^2,\nonumber\\[2mm]
&\sup_{0\leq t\leq T^*}\|J\|^2+\int_{0}^{t}\|\nabla J\|^2ds\leq  c^2_1,\nonumber\\[2mm]
&\sup_{0\leq t\leq T^*}\|\nabla J\|^2+\int_0^t(\|J_t\|^2+\|\nabla^2 J\|^2)ds\leq  c_2^2,\nonumber\\[2mm]
&\sup_{0\leq t\leq T^*}\left(\|J_t\|^2+\|\nabla^2J\|^2\right)+\int_0^t(\|\nabla J_t\|^2+\|\nabla^3J\|^2)ds\leq  c_3^2.
\end{align}
In conclusion, given fixed constants $c_0$ and $T$, there really exists constants $T^{*}$ and $c_i$, $(i=1,2,3)$, which depend only on $c_0$ and $T$, such that if $v$ satisfy the conditions (\ref{known}), then the strong  solution $(\phi, H,u,J)$ to the initial-boundary value problem (\ref{linear})- (\ref{bdy-Jl}) not only exists for $\Omega\times[0,T^*]$, but also satisfies  the estimates (\ref{local-linear}).
In the proof  of Theorem \ref{thm2}, these estimates play an important role in the iteration process.

\subsection{Unique solvability of the linearization with vacuum}\label{3.4}
Based on the local (in time) estimates in (\ref{local-linear}), we have the following existence result under the assumption that $\phi_0>0$.
\begin{lem}\label{lem4}
Assume that the initial data $(\phi_0,u_0,H_0, J_0)$ satisfy (\ref{intial2}). Then for $T^*>0$, there exists a unique strong solution $(\phi,u,H,J)$ to the initial-boundary value problem (\ref{linear})-(\ref{bdy-Jl}) such that
\begin{equation}
	\begin{split}
		&\phi\geq 0,\, \phi\in C([0,T^*];H^3), \,\phi_t\in C([0,T^*];H^2),\\[2mm]
		&\psi\in  C([0,T^*];D^1\cap D^2), \psi_t\in  C([0,T^*];H^1),\\[2mm]
		&H\in C([0,T^*];H^3)\cap L^2([0,T^*];H^4), H_t\in C([0,T^*];H^1)\cap L^2([0,T^*];H^2),\\[2mm]
		&H_{tt}\in L^2([0,T^*];L^2),\\[2mm]
		&J\in L^{\infty}([0,T^*];H^2)\cap L^2([0,T^*];H^3), J_t\in L^{\infty}([0,T_{*}];L^2)\cap L^{2}(0,T^{*};H^1),\\[2mm]
		&u\in C([0,T^*];H^3)\cap L^2([0,T^*];H^4), u_t\in C([0,T^*];H^1)\cap L^2([0,T^*];H^2),\\[2mm]
		&u_{tt}\in L^2([0,T^*];L^2).
		\end{split}
	\end{equation}
Moreover, the solution $(\phi,\psi, H,J,u)$ also satisfies the estimates in (\ref{local-linear}).
\end{lem}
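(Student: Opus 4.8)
The plan is to remove the artificial lower bound $\sigma$ by a vanishing-parameter approximation on the initial density, exploiting the fact that the a priori bounds (\ref{local-linear}) derived in Subsection \ref{3.3} are independent of $\sigma$. Concretely, for each small $\delta>0$ I would set $\phi_0^\delta=\phi_0+\delta$, so that $\phi_0^\delta\geq\delta>0$ and $\phi_0^\delta-\delta=\phi_0\in H^3$, and take the induced data $\psi_0^\delta=\frac{2}{\gamma-1}\nabla\phi_0/(\phi_0+\delta)$ and $J_0^\delta=H_0/(\phi_0+\delta)^{2/(\gamma-1)}$. The first task is to verify that this regularized data satisfies (\ref{intial2}) with $\sigma=\delta$ and, crucially, that the bound (\ref{initial-0}) holds with a constant $c_0$ independent of $\delta$; this amounts to showing $\psi_0^\delta\to\psi_0$ in $D^1\cap D^2$ and $J_0^\delta\to J_0$ in $H^2$ with uniform bounds. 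Lemma \ref{global} then yields, for every $\delta>0$, a unique global strong solution $(\phi^\delta,u^\delta,H^\delta,J^\delta)$, and since the estimates of Lemmas \ref{lem-phi}--\ref{lem-u} do not see $\delta$, these solutions obey (\ref{local-linear}) on a common interval $[0,T^*]$ with $T^*$ and the $c_i$ independent of $\delta$.

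The second step is to pass to the limit $\delta\to0$. Using (\ref{local-linear}) together with the Aubin--Lions lemma (Lemma \ref{lem-AL}), I would extract a subsequence along which $\phi^\delta,u^\delta,H^\delta,J^\delta$ converge weak-$*$ in the top-order spaces of (\ref{local-linear}) and strongly in the corresponding lower-order spaces on compact subsets. Because $v$ is fixed along the sequence (and $\psi^\delta$, solving (\ref{linear2}) with data $\psi_0^\delta\to\psi_0$, converges accordingly) and the system is triangular in $(\phi,\psi)\to H\to J\to u$, each equation is linear in its principal unknown; the only genuinely quadratic products entering the limit are $\phi^\delta\nabla\phi^\delta$ in $(\ref{linear})_2$ and $J^\delta\cdot\nabla H^\delta$, which pass to the limit since the first factor converges strongly while the second converges weakly with sufficient regularity. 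This identifies the limit $(\phi,u,H,J)$ as a solution of (\ref{linear})--(\ref{bdy-Jl}) attaining the original data, and lower semicontinuity of norms transfers the bounds (\ref{local-linear}) to the limit, giving the stated $L^\infty$- and $L^2$-in-time regularity.

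For the strong time-continuity assertions I would argue in the usual manner: the uniform bounds already furnish weak-in-time continuity in the top space, so by Lemma \ref{lem-strong} it suffices to show the relevant norms depend continuously on $t$, which follows from the energy identities of each equation (the transport balance for $\|\phi(t)-\phi^\infty\|_3$ and the parabolic energy balances for $u$ and $H$). Uniqueness is then proved directly on the original problem via the same triangular structure: for two solutions sharing the data and $v$, the difference $\bar\phi$ solves a homogeneous transport equation and vanishes; $\bar H$ and then $\bar J$ satisfy homogeneous parabolic equations with zero data and vanish by $L^2$ energy estimates; and with $\bar\phi=\bar H=\bar J=0$ the quadratic differences $\phi^1\nabla\phi^1-\phi^2\nabla\phi^2$ and $J^1\cdot\nabla H^1-J^2\cdot\nabla H^2$ drop out, so $\bar u$ solves a homogeneous Lam\'e evolution and also vanishes.

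I expect the main obstacle to lie at the two ends of the argument that are not a mere reuse of the $\sigma$-independent estimates: establishing the uniform-in-$\delta$ control of $\psi_0^\delta$ in $D^1\cap D^2$ and $J_0^\delta$ in $H^2$ (where the high derivatives of $(\phi_0+\delta)^{-1}$ must be bounded uniformly, using the far-field decay encoded in $\psi_0\in D^1\cap D^2$), and recovering strong time-continuity in the top-order spaces rather than mere weak continuity. By comparison, the compactness passage and the uniqueness argument are routine given the triangular linear structure and the estimates already in hand.
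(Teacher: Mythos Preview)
Your proposal is correct and follows essentially the same route as the paper: regularize $\phi_0$ to $\phi_0+\delta$, invoke Lemma \ref{global}, exploit the $\delta$-independence of the estimates in (\ref{local-linear}), and pass to the limit via weak-$*$ compactness and Aubin--Lions. You have also correctly isolated the only nontrivial step---the uniform $D^1\cap D^2$ bound on $\psi_0^\delta$ and $H^2$ bound on $J_0^\delta$---and the mechanism behind it (namely that $\psi_0\in D^1\cap D^2$ forces $|\nabla\phi_0|\le C\phi_0$); the paper makes exactly this observation, writing $\psi_0^\delta-\psi_0=-\tfrac{\delta}{\delta+\phi_0}\psi_0$ and similarly for $J_0^\delta$, and obtains a bound $\bar c_0=Cc_0^4$ rather than $c_0$ itself, which is still $\delta$-independent and hence harmless.
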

\begin{proof}
	For $\sigma\in(0,1)$, we define that
	\begin{equation*}
		\phi_{\sigma0}=\phi_0+\sigma, \quad\psi_{\sigma0}=\frac{2\nabla\phi_0}{(\gamma-1)(\phi_0+\sigma)}, \quad J_{\sigma0}=\frac{H_0}{(\phi_0+\sigma)^{\frac{2}{\gamma-1}}}.
		\end{equation*}
	Since $\psi_0\in D^1\cap D^2$, by employing the Gagliardo-Nirenberg inequality, there exists a constant $C_1$ such that for any $x\in\Omega$, the following holds
	\begin{equation*}
		 \left\|\frac{\nabla\phi_0}{\phi_0}\right\|_{L^{\infty}}=\frac{\gamma-1}{2}\|\psi_0\|_{L^{\infty}}\leq C\|\nabla\psi_0\|^{\frac{1}{2}}\|\nabla^2\psi_0\|^{\frac{1}{2}}\leq Cc_0,
		\end{equation*}
	which implies that
	\begin{equation*}
|\nabla\phi_0|\leq C_1\phi_0.
\end{equation*}
Hence we have $\nabla\phi_0(x)=0$ if $\phi_0(x)=0$. Noted that
\begin{center} $\psi_{\sigma0}-\psi_0=-\frac{\sigma}{\sigma+\phi_0}\psi_0$,\quad $J_{\sigma0}-J_0=-\frac{(\phi_0+\sigma)^{\frac{2}{\gamma-1}}-\phi_0^{\frac{2}{\gamma-1}}}{(\phi_0+\sigma)^{\frac{2}{\gamma-1}}}J_0$,
	\end{center}
	 and according to (\ref{initial-0}), there is a positive constant $\sigma_1$ small enough such that if $0< \sigma< \sigma_1$, then one has
\begin{equation}
1+\|\phi_{\sigma0}\|_{L^{\infty}}+\|\phi_{\sigma0}-\sigma\|_3+\|u_0\|_3+\|H_0\|_3+\|J_{\sigma0}\|_2+|\psi_{\sigma0} |_{ D^1\cap D^2}\leq Cc^4_0=\bar{c}_0.\nonumber
	\end{equation}
Therefore, if we take $(\phi_{\sigma0}, u_0,H_0,J_{\sigma0})$ as the initial data, then the initial-boundary value problem (\ref{linear})-(\ref{bdy-Jl}) exists a unique strong solution $(\phi^{\sigma}, u^{\sigma},H^{\sigma},J^{\sigma})$, which satisfies the estimates (\ref{local-linear}). Because the estimates (\ref{local-linear}) are independent of $\sigma$, one can find a subsequence of solutions (still denoted by) $(\phi^{\sigma},\psi^{\sigma},u^{\sigma},H^{\sigma},J^\sigma)$ that converges to $(\phi,\psi,u,H,J)$ in the weak$^*$ sense as $\sigma\rightarrow0$, that is:
	\begin{equation}\label{weak}
		\begin{split}
			&w^*-\lim_{\sigma\rightarrow0}(\phi^\sigma, u^{\sigma},H^\sigma)=(\phi,u,H)\in L^\infty(0,T^*;H^3);\\[2mm]
&w^*-\lim_{\sigma\rightarrow0}J^\sigma=J\in L^\infty(0,T^*;H^2);\\[2mm]
&w^*-\lim_{\sigma\rightarrow0}J_t^\sigma=J_t\in L^\infty(0,T^*;L^2);\\[2mm]
			&w^*-\lim_{\sigma\rightarrow0}\psi^\sigma=\psi\in L^\infty(0,T^*;D^1\cap D^2);\\[2mm]
			&w^*-\lim_{\sigma\rightarrow0}\phi_t^\sigma=\phi_t\in L^\infty(0,T^*;H^2);\\[2mm]
			&w^*-\lim_{\sigma\rightarrow0}(\psi_t^{\sigma},u_t^{\sigma},H_t^\sigma)=(\psi_t,u_t,H_t)\in L^\infty(0,T^*;H^1).
			\end{split}
		\end{equation}
	
In addition, for any constant $R>0$, employing the uniform estimates (\ref{local-linear}) and Aubin-Lions Lemma (refer to Lemma \ref{lem-AL}), there  exists  a subsequence of solutions (still denoted by) $(\phi^{\sigma},\psi^{\sigma}, u^{\sigma},H^{\sigma},J^{\sigma})$ satisfying
	\begin{equation}\label{strong}
		\lim_{\sigma\rightarrow0}(\phi^\sigma,\psi^{\sigma},u^{\sigma},H^{\sigma},J^\sigma)=(\phi,\psi, u,H,J)\in C\left([0,T^*];H^1(B_R\cap\Omega)\right)
		\end{equation}
where $B_R=\{x\in\mathbb{R}^3|\,|x|\leq R\}$.
Then through the uniform estimates (\ref{local-linear}), the weak or weak$^*$ convergence in (\ref{weak}) along with the strong convergence in (\ref{strong}), we conclude that $(\phi,u,H,J)$  is a weak solution in the sense of distribution to the linearized problem (\ref{linear})-(\ref{bdy-Jl}), and satisfies the following regularities:
\begin{equation}\label{reg}
	\begin{split}
		&\phi\geq 0, \phi\in L^{\infty}([0,T^*];H^3), \phi_t\in L^{\infty}([0,T^*];H^2),\\[2mm]
		&\psi\in  L^{\infty}([0,T^*];D^1\cap D^2), \psi_t\in  L^{\infty}([0,T^*];H^1),\\[2mm]
		&H\in L^{\infty}([0,T^*];H^3)\cap L^2([0,T^*];H^4), H_t\in L^{\infty}([0,T^*];H^1)\cap L^2([0,T^*];H^2),\\[2mm]
		&H_{tt}\in L^2([0,T^*];L^2),J\in L^{\infty}([0,T^*];H^2)\cap L^2([0,T^*];H^3),\\[2mm]
		&J_t\in L^{\infty}([0,T^*];L^2)\cap L^2([0,T^*];H^1),\\[2mm]
		&u\in L^{\infty}([0,T^*];H^3)\cap L^2([0,T^*];H^4), u_t\in L^{\infty}([0,T^*];H^1)\cap L^2([0,T^*];H^2),\\[2mm]
		&u_{tt}\in L^2([0,T^*];L^2).
	\end{split}
	\end{equation}

The uniqueness and time continuity for $(\phi,\psi,H,u)$ can be obtained by standard procedure, so we omit the details here.
\end{proof}

\subsection{Proof of Theorem \ref{thm2} }\label{3.5}
Under the help of the classical iteration and the  existence results of the  linearization with vacuum of Lemma \ref{lem4}, we can prove that the reformulated
problem (\ref{J})-(\ref{initial-2}) exists a local solution in this subsection.

{\bf {Proof of Theorem \ref{thm2}.}}
Firstly, we  fix a positive constant $c_0$ sufficiently large such that
\begin{align}
		2+\phi^{\infty}+\|\phi_0-\phi^{\infty}\|_3+\|u_0\|_3+\|H_0\|_3+\|J_0\|_2+|\psi_0 |_{D^1\cap D^2}\leq c_0.\nonumber
	\end{align}
Then let $u^0$ be the unique solution to the following linear parabolic problem:
\begin{equation*}
\left\{
\begin{array}{lll}
h_t-\Delta h=0\quad \mbox{in}\quad \Omega\times(0,\infty),\\
h|_{t=0}=u_0\quad \mbox{in}\quad \Omega,\\
h\cdot n=0,\quad \mbox{curl}h\times n=-K(x)h\quad \mbox{on}\quad \partial\Omega,\\
h\rightarrow 0 \quad\mbox{as}\quad |x|\rightarrow \infty, \quad t>0
\end{array}
\right.
\end{equation*}
It is clear that $u^0\in C([0,T^{*}]; H^3)\cap L^2([0,T^*];H^4)$, which implies that there exists a small
time $T^{**}\in (0,T^{*}]$ such that
\begin{align*}\label{known}
	&\sup_{0\leq t\leq T^{**}}\|u^0(t)\|^2_1+\int_0^{T^{**}}\left(\|\nabla u^0\|^2_1+\|u^0_t\|^2\right)dt\leq c_1^2,\nonumber\\[2mm]
	&\sup_{0\leq t\leq T^{**}}\left(\|\nabla^2u^0(t)\|^2+\|u^0_t\|^2\right)+\int_0^{T^{**}}\left(\|\nabla^3u^0\|^2+\|\nabla u^0_t\|^2\right)dt\leq c_2^2,\nonumber\\[2mm]
	&\sup_{0\leq t\leq T^{**}}\left(\|\nabla^3u^0(t)\|^2+\|\nabla u^0_t\|^2\right)+\int_0^{T^{**}}\left(\|\nabla^4u^0\|^2+\|\nabla^2u^0_t\|^2+\|u^0_{tt}\|^2\right)dt\leq c_3^2.
\end{align*}

At the beginning step of iteration, we choose  $v=u^0$, then
 $(\phi^1, \psi^1,H^1, J^1, u^1)$ could be obtained as a strong solution to the linearized problem \eqref{linear}-\eqref{linear2}. Then we construct approximate solutions $(\phi^{k+1}, \psi^{k+1}, H^{k+1}, J^{k+1}, u^{k+1})$ inductively. In other words, given $(\phi^k, \psi^{k},H^k, J^k, u^k)$ is well-defined for $k\geq 1$,
then $(\phi^{k+1}, \psi^{k+1}, H^{k+1}, J^{k+1}, u^{k+1})$ should be defined by solving the problem \eqref{linear}-\eqref{linear2} with $v$ replaced by $u^k$ as follows:
\begin{equation}\label{inter-0}
\begin{split}
\left\{
\begin{array}{llll}
\phi^{k+1}_t+u^{k}\cdot\nabla\phi^{k+1}+\frac{\gamma-1}{2}\phi^{k+1}\mbox{div}u^{k}=0,\\[2mm]
\psi^{k+1}_t+\nabla(u^k\cdot\psi^{k+1})+\nabla \mbox{div}u^{k}=0,\\[2mm]
H^{k+1}_{t}- \eta \Delta H^{k+1}=-u^{k}\cdot\nabla H^{k+1}+H^{k+1}\cdot\nabla u^{k}-H^{k+1}\mbox{div}u^{k},\\[2mm]
J^{k+1}_t-\eta\Delta J^{k+1}=-u^{k}\cdot\nabla J^{k+1}+J^{k+1}\cdot\nabla u^{k}+\eta J^{k+1}\mbox{div}\psi^{k+1}+\eta J^{k+1}(\psi^{k+1})^2
+2\eta \psi^{k+1}\cdot \nabla J^{k+1},\\[2mm]
u^{k+1}_{t}+\alpha\mbox{curl}^2 u^{k+1}-(2\alpha+\beta)\nabla\mbox{div}u^{k+1}\\[1mm]
=-u^{k}\cdot\nabla u^{k}-\frac{2A\gamma}{\gamma-1}\phi^{k+1}\cdot\nabla\phi^{k+1}
+\psi^{k+1}\cdot Q(u^{k})+J^{k+1}\cdot\nabla H^{k+1}-J^{k+1}\cdot(\nabla H^{k+1})^{T}\\[2mm]
(\phi^{k+1},\psi^{k+1},H^{k+1},J^{k+1},u^{k+1})|_{t=0}=(\phi_0,\psi_0,H_0, J_0,u_0)\\[2mm]
u^{k+1}\cdot n|_{\partial\Omega}=0,\quad \mbox{curl}u^{k+1}\times n|_{\partial\Omega}=-K(x)u^{k+1},\quad H^{k+1}\cdot n|_{\partial\Omega}=0, \quad \mbox{curl}H^{k+1}\times n|_{\partial\Omega}=0\\[2mm]
J^{k+1}\cdot n|_{\partial\Omega}=0, \quad \mbox{curl}J^{k+1}\times n|_{\partial\Omega}=-(\psi^{k+1}\cdot n)J^{k+1},\\[2mm]
(\phi^{k+1},\psi^{k+1},H^{k+1},J^{k+1},u^{k+1})\rightarrow (0,0,0,0,0),\quad \mbox{as}\quad |x|\rightarrow \infty, \  t>0.
\end{array}
\right.
\end{split}
\end{equation}
Through a similar computation as Subsection \ref{3.3}, we conclude that the sequences of solutions $(\phi^{k},\psi^{k},H^{k},J^{k},u^{k})$ meet the uniform a priori estimate \eqref{local-linear}.

Now we will prove the full sequence $(\phi^{k},\psi^{k},H^{k},J^{k},u^{k})$ converges  to a limit $(\phi,\psi,H,J,u)$
in some strong sense.

Set
\begin{align*}
&\overline{\phi}^{k+1}=\phi^{k+1}-\phi^{k},\quad \quad  \overline{ \psi}^{k+1} =\psi^{k+1}-\psi^{k},\quad \quad
\overline{H}^{k+1}=H^{k+1}-H^{k},\\
&\overline{J}^{k+1}=J^{k+1}-J^{k}, \quad \quad  \overline{u}^{k+1}=u^{k+1}-u^{k}.
\end{align*}
Then it follows from \eqref{inter-0} that
\begin{equation}\label{inter1}
\begin{split}
\left\{
\begin{array}{lll}
\overline{\phi}^{k+1}_t+u^{k}\cdot\nabla\overline{\phi}^{k+1}+\overline{u}^{k}\cdot\nabla \phi^{k}
+\frac{\gamma-1}{2}\left(\overline{\phi}^{k+1}\mbox{div}u^{k}+\phi^{k}\mbox{div}\overline{u}^{k}\right)=0,\\[4mm]
\partial_{t}\overline{\psi}^{k+1}_{j}+\partial_{j}u^{k}_i\overline{\psi}^{k+1}_i+\partial_{j}\overline{u}^{k}_{i}\psi^{k}_i
+u^{k}_{i}\partial_{j}\overline{\psi}^{k+1}_i+\overline{u}^{k}_i\partial_{j}\psi^{k}_i
+\partial_{j}\partial_{i}\overline{u}_i^{k}=0,\quad 1\leq i,j\leq 3,\\[4mm]
\overline{H}^{k+1}_{t}- \eta \Delta\overline{ H}^{k+1}=-u^{k}\cdot\nabla\overline{H}^{k+1}-\overline{u}^{k}\cdot\nabla H^{k}
+H^{k}\cdot\nabla \overline{u}^{k}+\overline{H}^{k+1}\cdot\nabla u^{k}
-H^{k}\mbox{div}\overline{u}^{k}-\overline{H}^{k+1}\mbox{div}u^{k},\\[2mm]
\begin{split}
\overline{J}^{k+1}_t-\eta\Delta \overline{J}^{k+1}=&-u^{k}\cdot\nabla \overline{J}^{k+1}-\overline{u}^{k}\cdot\nabla J^{k}
+\overline{J}^{k+1}\cdot\nabla u^{k}+J^{k}\cdot\nabla\overline{u}^{k}
+\eta J^{k}\mbox{div}\overline{\psi}^{k+1}\\[2mm]
&+\eta \overline{J}^{k+1}\mbox{div}\psi^{k+1}
+\eta J^{k}\left((\psi^{k+1})^2-(\psi^k)^2\right)+\eta \overline{J}^{k+1}(\psi^{k+1})^2\\[2mm]
&+2\eta \overline{\psi}^{k+1}\cdot \nabla J^{k}+2\eta\psi^{k+1}\cdot\nabla \overline{J}^{k+1}\equiv R_{4},\\[4mm]
\end{split}\\
\begin{split}
\overline{u}&^{k+1}_{t}+\alpha\mbox{curl}^2\overline{ u}^{k+1}-(2\alpha+\beta)\nabla\mbox{div}\overline{u}^{k+1}\\[2mm]
=&-u^{k}\cdot\nabla \overline{u}^{k}-\overline{u}^{k}\cdot\nabla u^{k-1}-\frac{A\gamma}{\gamma-1}\nabla\left((\phi^{k+1})^2-(\phi^{k})^2\right)
+\psi^{k+1} Q(\overline{u}^{k})+\overline{\psi}^{k+1}Q(u^{k-1})\\[2mm]
&+\overline{J}^{k+1}\cdot\nabla H^{k+1}+J^{k}\cdot\nabla\overline{ H}^{k+1}-
\overline{J}^{k+1}\cdot(\nabla H^{k+1})^{T}-J^{k}\cdot(\nabla\overline{ H}^{k+1})^{T}\equiv R_5,
\end{split}
\end{array}
\right.
\end{split}
\end{equation}
has been established with the following initial conditions
\begin{equation}
(\overline{\phi}^{k+1},\overline{\psi}^{k+1},\overline{H}^{k+1},\overline{J}^{k+1},\overline{u}^{k+1})|_{t=0}=(0,0,0,0,0)
\end{equation}
and boundary conditions
\begin{equation}\label{cond-uk}
\overline{u}^{k+1}\cdot n|_{\partial\Omega}=0,\quad \mbox{curl}\overline{u}^{k+1}\times n|_{\partial\Omega}=-K(x)\overline{u}^{k+1},
\end{equation}
\begin{equation}\label{cond-H}
\overline{H}^{k+1}\cdot n|_{\partial\Omega}=0, \quad \mbox{curl}\overline{H}^{k+1}\times n|_{\partial\Omega}=0,
\end{equation}
\begin{equation}\label{cond-Jk}
\overline{J}^{k+1}\cdot n|_{\partial\Omega}=0, \quad\mbox{curl}\overline{J}^{k+1}\times n|_{\partial\Omega}=-(\psi^{k+1}\cdot n)\overline{J}^{k+1}-(\overline{\psi}^{k+1}\cdot n)J^{k}.
\end{equation}

{\it Step 1:}  Estimate on $\|\overline{\phi}^{k+1}\|_{1}$. Multiplying $(\ref{inter1})_1$ by $\overline{\phi}^{k+1}$, integrating the result over $\Omega$, and applying boundary condition $u^k\cdot n|_{\partial\Omega}=0$ to have the following identity
\begin{equation*}
	-\int_{\Omega} u^k\cdot\nabla\overline{\phi}^{k+1}\cdot\overline{\phi}^{k+1}=\frac{1}{2}\int_{\Omega}\mbox{div}u^k|\overline{\phi}^{k+1}|^2,
	\end{equation*}
then it implies that
\begin{align}\label{phi-n-1}
\frac{1}{2}\frac{d}{dt}\|\overline{\phi}^{k+1}\|^2&=-\int_{\Omega}u^{k}\cdot\nabla\overline{ \phi}^{k+1}\cdot\overline{ \phi}^{k+1}
-\int_{\Omega}\left(\overline{u}^{k}\cdot\nabla \phi^{k}
+\frac{\gamma-1}{2}(\overline{\phi}^{k+1}\mbox{div}u^{k}+\phi^{k}\mbox{div}\overline{u}^{k})\right)\cdot\overline{ \phi}^{k+1}\nonumber\\[2mm]
&\leq  C\left(\|\nabla u^{k}\|_{L^{\infty}}\|\overline{\phi}^{k+1}\|^2+\|\overline{u}^{k}\|_{L^6}\|\nabla\phi^{k}\|_{L^3}\|\overline{\phi}^{k+1}\|
+\|\nabla\overline{u}^{k}\|\|\phi^{k}\|_{L^{\infty}}\|\overline{\phi}^{k+1}\|\right)\nonumber\\[2mm]
&\leq C_1^{k}(t)\|\overline{\phi}^{k+1}\|^2+\frac{\epsilon_1}{4} \|\nabla\overline{u}^{k}\|^2,
\end{align}
where
\begin{align}
C_1^{k}(t)=C\|\nabla u^{k}\|_{L^{\infty}}+C\epsilon_1^{-1}\left(\|\nabla\phi^{k}\|^2_{L^3}+\|\phi^{k}\|^2_{L^{\infty}}\right).\nonumber
\end{align}
Similarly, computing $\int_{\Omega}\partial_{x_i}(\ref{inter1})_1 \cdot\partial_{x_i}\overline{\phi}^{k+1}$, $i=1,2,3$, and utilizing
\begin{equation*}
	-\int_{\Omega} u_j^k\partial_{x_i}\partial_{x_j}\overline{\phi}^{k+1}\partial_{x_i}\overline{\phi}^{k+1}=\frac{1}{2}\int_{\Omega}\mbox{div} u^{k}|\nabla\overline{\phi}^{k+1}|^2 \leq \|\nabla u^{k}\|_{L^{\infty}}\|\nabla\overline{\phi}^{k+1}\|^2,
\end{equation*}
 it gives
\begin{align}\label{phi-n-2}
\frac{1}{2}\frac{d}{dt}\|\nabla\overline{\phi}^{k+1}\|^2
\leq& \|\nabla u^{k}\|_{L^{\infty}}\|\nabla\overline{\phi}^{k+1}\|^2+\|\nabla\overline{u}^{k}\|_{L^6}\|\nabla\phi^{k}\|_{L^3}\|\nabla\overline{\phi}^{k+1}\|
+\|\overline{u}^{k}\|_{L^6}\|\nabla^2\phi^k\|_{L^3}\|\nabla\overline{\phi}^{k+1}\|\nonumber\\[2mm]
&+\|\overline{\phi}^{k+1}\|_{L^6}\|\nabla^2u^{k}\|_{L^3}\|\nabla\overline{\phi}^{k+1}\|
+\|\nabla^2\overline{u}^{k}\|\|\phi^{k}\|_{L^{\infty}}\|\nabla\overline{\phi}^{k+1}\|\nonumber\\[2mm]
\leq &C_2^{k}(t)\|\nabla\overline{\phi}^{k+1}\|^2+\frac{\epsilon_1}{4} \|\nabla\overline{u}^{k}\|_1^2,
\end{align}
where \begin{align}
C_2^{k}(t)=C\left(\|\nabla u^{k}\|_{L^{\infty}}+\|\nabla^2u^{k}\|_{L^3}\right)+C\epsilon_1^{-1}\left(
\|\nabla\phi^{k}\|^2_{L^3}+\|\phi^{k}\|^2_{L^{\infty}}+\|\nabla^2\phi^{k}\|^2_{L^3}\right).\nonumber
\end{align}
Thus, the combination of \eqref{phi-n-1} and \eqref{phi-n-2} results in
\begin{align}\label{step1}
\frac{d}{dt}\|\overline{\phi}^{k+1}\|_1^2
\leq \left(C_1^{k}(t)+C_2^{k}(t)\right)\|\overline{\phi}^{k+1}\|_1^2+\epsilon_1 \|\nabla\overline{u}^{k}\|_1^2,
\end{align}
where
\begin{align}
\int_0^t(C_1^{k}(s)+C_2^{k}(s))ds\leq C+C\epsilon_1^{-1}t,\nonumber
\end{align}
thanks to the a priori estimate \eqref{local-linear}.

{\it Step 2:} Estimate on $\|\overline{\psi}^{k+1}\|$.
Multiplying $(\ref{inter1})_2$ by $\overline{\psi}^{k+1}$, $j=1,2,3$, integrating the result identity over $\Omega$, and summing them up yields
\begin{align}\label{psi^k+1}
\frac{1}{2}\frac{d}{dt}\|\overline{\psi}^{k+1}\|^2=&-\int_{\Omega}\left(\partial_{j}u^{k}_i\overline{\psi}^{k+1}_i
+\partial_{j}\overline{u}^{k}_{i}\psi^{k}_i
+\overline{u}^{k}_i\partial_{j}\psi^{k}_i\right) \overline{\psi}^{k+1}_j\nonumber\\[2mm]
&-\int_{\Omega}u^{k}_{i}\partial_{j}\overline{\psi}^{k+1}_i\overline{\psi}^{k+1}_j
-\int_{\Omega}\nabla \mbox{div}\overline{u}^{k} \cdot\overline{\psi}^{k+1}.
\end{align}
Integrating by parts with the boundary condition $u^{k}\cdot n|_{\partial\Omega}=0$, it holds
\begin{align}
-\int_{\Omega}u^{k}_{i}\partial_{j}\overline{\psi}^{k+1}_i\overline{\psi}^{k+1}_j
&=-\frac{1}{2}\int_{\Omega}u^{k}_{i}\partial_{i}\overline{\psi}^{k+1}_j\overline{\psi}^{k+1}_j
=\frac{1}{2}\int_{\Omega}\mbox{div}u^{k}|\overline{\psi}^{k+1}|^2\nonumber\\[2mm]
&\leq \|\nabla u^{k}\|_{L^{\infty}}\|\overline{\psi}^{k+1}\|^2\nonumber
\end{align}
due to $\partial_{j}\overline{\psi_i}^{k+1}=\partial_{i}\overline{\psi_j}^{k+1}$. Therefore, from (\ref{psi^k+1}), we get
\begin{align}\label{step2}
&\frac{d}{dt}\|\overline{\psi}^{k+1}\|^2
\leq
C^{k}_3(t)\|\overline{\psi}^{k+1}\|^2+\epsilon_1 \|\nabla \overline{u}^{k}\|_1^2,
\end{align}
where
\begin{align*}
C^{k}_3(t)=C\left(1+ \|\nabla u^{k}\|_{L^{\infty}}\right)+C\epsilon_1^{-1}\left(\|\psi^{k}\|^2_{L^{\infty}}+\|\nabla\psi^{k}\|^2_{L^3}\right)
\end{align*}
satisfies
 \begin{align}
 \int_{0}^{t}C^{k}_3(s)ds\leq C+C\epsilon_1^{-1}t. \nonumber
 \end{align}

{\it Step 3:} Estimate on $\|\overline{H}^{k+1}\|$. Multiplying $(\ref{inter1})_3$ by $\overline{H}^{k+1}$, integrating by parts with the boundary condition $\mbox{curl}\overline{H}^{k+1}\times n|_{\partial\Omega}=0$, one has
\begin{align}\label{Hk+1}
\frac{1}{2}\frac{d}{dt}\|\overline{H}^{k+1}\|^2+\eta\|\mbox{curl}\overline{H}^{k+1}\|^2
\leq\varepsilon_1^*\|\nabla\overline{H}^{k+1}\|^2+\epsilon_2 \|\nabla\overline{u}^{k}\|^2+\widetilde{ C^{k}_{4}}\|\overline{H}^{k+1}\|^2,
\end{align}
where
\begin{align}
\widetilde{C}^{k}_{4}=C\epsilon_2^{-1}\left(\|\nabla H^{k}\|^2_{L^3}+\|H^{k}\|^2_{L^{\infty}}\right)+C(\varepsilon_1^*)^{-1}\left(\|u^{k}\|^2_{L^{\infty}}+\|\nabla u^{k}\|^2_{L^3}\right).\nonumber
\end{align}
Because
\begin{align}
\|\nabla\overline{H}^{k+1}\|^2\leq C\left(\|\mbox{curl}\overline{H}^{k+1}\|^2 +\|\overline{H}^{k+1}\|^2\right),\nonumber
\end{align}
after choosing $\varepsilon_1^*$ small enough, \eqref{Hk+1} implies that there exists a constant $b_1>0$ such that
\begin{align}\label{step3}
\frac{d}{dt}\|\overline{H}^{k+1}\|^2+b_1\|\nabla\overline{H}^{k+1}\|^2
\leq C^{k}_{4}\|\overline{H}^{k+1}\|^2
+\epsilon_2 \|\nabla\overline{u}^{k}\|^2,
\end{align}
where
\begin{align}
C^{k}_{4}=C+\widetilde{C}^{k}_{4},\nonumber
\end{align}
with
  \begin{align}
 \int_{0}^{t}C^{k}_4(s)ds\leq  C+C\epsilon_2^{-1}t. \nonumber
 \end{align}

{\it Step 4:} Estimate on $\|\overline{J}^{k+1}\|$. Multiplying $(\ref{inter1})_4$ by $\overline{J}^{k+1}$, exploiting the boundary condition $\overline J^{k+1}\cdot n|_{\partial\Omega}=0$, then we have
\begin{align}\label{J^{k+1}}
&\frac{1}{2}\frac{d}{dt}\|\overline{J}^{k+1}\|^2+\eta\left(\|\mbox{curl}\overline{J}^{k+1}\|^2+\|\mbox{div}\overline{J}^{k+1}\|^2\right)
-\eta\int_{\partial\Omega}(\mbox{curl}\overline{J}^{k+1}\times n)\cdot \overline{J}^{k+1}\nonumber\\[2mm]
&=\int_{\Omega}R_4\cdot \overline{J}^{k+1}.
\end{align}
Using boundary condition \eqref{cond-Jk},  the boundary term becomes
\begin{align}\label{bdy-J^{k+1}}
-&\eta\int_{\partial\Omega}(\mbox{curl}\overline{J}^{k+1}\times n)\cdot \overline{J}^{k+1}
\nonumber\\[2mm]
=&
\eta\int_{\partial\Omega}(\psi^{k+1}\cdot n)\overline{J}^{k+1}\cdot \overline{J}^{k+1}+\eta\int_{\partial\Omega}(\overline{\psi}^{k+1}\cdot n)J^{k}\cdot \overline{J}^{k+1}\nonumber\\[2mm]
=&\eta\int_{\Omega}\mbox{div}\left(\overline{J}^{k+1}\cdot \overline{J}^{k+1}\psi^{k+1}\right)+\eta\int_{\Omega}\mbox{div}\left(J^{k}\cdot \overline{J}^{k+1}\overline{\psi}^{k+1}\right)\nonumber\\[2mm]
=&2\eta\int_{\Omega}\psi^{k+1}\cdot\nabla\overline{J}^{k+1}\cdot \overline{J}^{k+1}
+\eta\int_{\Omega}\mbox{div}\psi^{k+1}\left(\overline{J}^{k+1}\cdot \overline{J}^{k+1}\right)
\nonumber\\[2mm]
&+\eta\int_{\Omega}\overline{\psi}^{k+1}\cdot\nabla J^{k}\overline{J}^{k+1}
+\eta\int_{\Omega}\mbox{div}\overline{\psi}^{k+1}\left(J^{k}\cdot \overline{J}^{k+1}\right)
+\eta\int_{\Omega}\overline{\psi}^{k+1}\cdot\nabla \overline{J}^{k+1} J^{k}.
\end{align}
Remark that the first four  terms on the right-hand side of (\ref{bdy-J^{k+1}}) will be canceled out by the corresponding term of $\int_{\Omega}R_4\cdot \overline{J}^{k+1}$, then one obtains
\begin{align}
\frac{1}{2}&\frac{d}{dt}\|\overline{J}^{k+1}\|^2+\eta\left(\|\mbox{curl}\overline{J}^{k+1}\|^2
+\|\mbox{div}\overline{J}^{k+1}\|^2\right)\nonumber\\[2mm]
=&-\eta\int_{\Omega}\overline{\psi}^{k+1}\cdot\nabla \overline{J}^{k+1} J^{k}
+\int_{\Omega}\big(-u^{k}\cdot\nabla \overline{J}^{k+1}-\overline{u}^{k}\cdot\nabla J^{k}
+J^{k}\cdot\nabla\overline{u}^{k}+\overline{J}^{k+1}\cdot\nabla u^{k}
\big)\cdot\overline{J}^{k+1}\nonumber\\[2mm]
&+\int_{\Omega}\big(\eta J^{k}((\psi^{k+1})^2-(\psi^k)^2)+\eta \overline{J}^{k+1}(\psi^{k+1})^2
+\eta \overline{\psi}^{k+1}\cdot \nabla J^{k}
\big)\cdot  \overline{J}^{k+1}\nonumber\\[2mm]
\leq&
\|\overline{\psi}^{k+1}\| \|\nabla \overline{J}^{k+1}\| \|J^{k}\|_{L^{\infty}}
+\|u^{k}\|_{L^{\infty}}\|\nabla \overline{J}^{k+1}\|\|\overline{J}^{k+1}\|
+\|\overline{u}^{k}\|_{L^6}\|\nabla J^{k}\|_{L^3}\|\overline{J}^{k+1}\|\nonumber\\[2mm]
&+\|J^{k}\|_{L^{\infty}}\|\nabla\overline{ u}^{k}\| \|\overline{J}^{k+1}\|
+\|\overline{J}^{k+1}\|_{L^6}\|\nabla u^{k}\|_{L^3} \|\overline{J}^{k+1}\|
\nonumber\\[2mm]
&+\|J^k\|_{L^6}\|\overline{\psi}^{k+1}\|\|\overline{J}^{k+1}\|_{L^6}\left(\|\psi^{k+1}\|_{L^6}+\|\psi^{k}\|_{L^6}\right)\nonumber\\[2mm]
&+ \|\overline{J}^{k+1}\|_{L^6}\|\psi^{k+1}\|^2_{L^6}\|\overline{J}^{k+1}\|_{L^2}
+ \|\overline{\psi}^{k+1}\|_{L^2} \|\nabla J^{k}\|_{L^3} \|\overline{J}^{k+1}\|_{L^6}. \nonumber
\end{align}
It indicates that
\begin{align}\label{step4-0}
&\frac{1}{2}\frac{d}{dt}\|\overline{J}^{k+1}\|^2+\eta\left(\|\mbox{curl}\overline{J}^{k+1}\|^2
+\|\mbox{div}\overline{J}^{k+1}\|^2\right)\nonumber\\[2mm]
&\leq\widetilde{C}^{k}_{6}(t) \|\overline{J}^{k+1}\|^2
+C^{k}_{7}(t)\|\overline{\psi}^{k+1}\|^2+ \varepsilon_2^*\|\nabla\overline{J}^{k+1} \|^2
+\epsilon_3\|\nabla\overline{u}^{k}\|^2,
\end{align}
where
\begin{align}
\widetilde{ C}^{k}_{6}(t)&=C{(\varepsilon_2^*)}^{-1}\Big(\|u^k\|^2_{L^{\infty}}+\|\nabla u^{k}\|^2_{L^3}
+\|\psi^{k+1}\|^4_{L^{6}}\Big)+C
{\epsilon_3}^{-1}\left(\|\nabla J^{k}\|^2_{L^3}+\|J^{k}\|^2_{L^{\infty}}\right),\nonumber\\[2mm]
C^{k}_{7}(t)&=C(\varepsilon_2^*)^{-1}\left(\|J^{k}\|^2_{L^{\infty}}
+\|J^k\|^2_{L^6}(\|\psi^{k+1}\|^2_{L^6}+\|\psi^{k}\|^2_{L^6})+\|\nabla J^{k}\|^2_{L^{3}}\right).\nonumber
 \end{align}
By using Lemma \ref{lem-div-curl} to $\overline{J}^{k+1}$ and choosing $\varepsilon^*_2$ small,  \eqref{step4-0}  yields that
\begin{align}\label{step4}
\frac{d}{dt}\|\overline{J}^{k+1}\|^2+b_{2}\|\nabla\overline{J}^{k+1}\|^2
\leq C^{k}_{6}(t) \|\overline{J}^{k+1}\|^2
+C^{k}_{7}(t)\|\overline{\psi}^{k+1}\|^2
+\epsilon_3\|\nabla\overline{u}^{k}\|^2,
 \end{align}
where
 \begin{align}
C^{k}_6(t)&=\widetilde{C}^{k}_6(t)+C,\nonumber
 \end{align}
satisfies
 \begin{align}
 \int_{0}^{t}(C^{k}_6(s)+C^{k}_7(s))ds\leq  C+C\epsilon_3^{-1}t. \nonumber
 \end{align}

{\it  Step 5:} Estimate on $\|\overline{u}^{k+1}\|_1$. Multiplying $(\ref{inter1})_5$ by $\overline{u}^{k+1}$, integrating the result over $\Omega$, and integrating by parts with the boundary condition (\ref{cond-uk}), one has
\begin{align}
\frac{1}{2}&\frac{d}{dt}\|\overline{u}^{k+1}\|^2+\alpha\|\mbox{curl}\overline{u}^{k+1}\|^2
+(2\alpha+\beta)\|\mbox{div}\overline{u}^{k+1}\|^2
+\alpha\int_{\partial\Omega}K(x)|\overline{u}^{k+1}|^2\nonumber\\[2mm]
=&\int_{\Omega}\left(-u^{k}\cdot\nabla \overline{u}^{k}
-\overline{u}^{k}\cdot\nabla u^{k-1}
\right)\cdot \overline{u}^{k+1}
+\int_{\Omega}
\frac{A\gamma}{\gamma-1}\left((\phi^{k+1})^2-(\phi^{k})^2\right)\mbox{div}\overline{u}^{k+1}\nonumber\\[2mm]
&+\int_{\Omega}\left(\psi^{k+1} Q(\overline{u}^{k})+\overline{\psi}^{k+1}Q(u^{k-1})\right)\cdot \overline{u}^{k+1}\nonumber\\[2mm]
&+\int_{\Omega}\left(\overline{J}^{k+1}\cdot\nabla H^{k+1}+J^{k}\cdot\nabla\overline{ H}^{k+1}-
\overline{J}^{k+1}\cdot(\nabla H^{k+1})^{T}-J^{k}\cdot(\nabla\overline{ H}^{k+1})^{T}\right)\cdot \overline{u}^{k+1}\nonumber\\[2mm]
\leq& \left(\|u^{k}\|_{L^{\infty}}\|\nabla \overline{u}^{k}\|
+\|\overline{u}^{k}\|_{L^6}\|\nabla u^{k-1}\|_{L^3}
\right) \| \overline{u}^{k+1}\|
+\|\overline{\phi}^{k+1}\|\|\phi^{k+1}+\phi^{k}\|_{L^{\infty}}\|\mbox{div}\overline{u}^{k+1}\|\nonumber \\[2mm]
&+\|\psi^{k+1}\|_{L^{\infty}}\|\nabla\overline{u}^{k}\| \|\overline{u}^{k+1}\|+\|\overline{\psi}^{k+1}\| \|\nabla u^{k-1}\|_{L^{3}} \|\overline{u}^{k+1}\|_{L^6}\nonumber \\[2mm]
&+ \|\overline{J}^{k+1}\| \|\nabla H^{k+1}\|_{L^{3}}\|\overline{u}^{k+1}\|_{L^6}
+\|J^{k}\|_{L^{\infty}}\|\nabla\overline{H}^{k+1}\| \|\overline{u}^{k+1}\|.\nonumber
\end{align}
Therefore, one obtains
\begin{align}\label{u^{k+1}}
\frac{1}{2}&\frac{d}{dt}\|\overline{u}^{k+1}\|^2+\alpha\|\mbox{curl}\overline{u}^{k+1}\|^2
+(2\alpha+\beta)\|\mbox{div}\overline{u}^{k+1}\|^2
+\alpha\int_{\partial\Omega}K(x)|\overline{u}^{k+1}|^2\nonumber\\[2mm]
\leq&\widetilde{ C}^{k}_{8}(t)\|\overline{u}^{k+1}\|^2+ C^{k}_9(t)\|\overline{\phi}^{k+1}\|^2+C^k_{10}(t)\|\overline{\psi}^{k+1}\|^2 +C^{k}_{11}(t)\|\overline{J}^{k+1}\|^2\nonumber\\[2mm]
&+\varepsilon_3^* \|\nabla\overline{u}^{k+1}\|^2+\varepsilon_4^*\|\nabla\overline{H}^{k+1}\|^2+\epsilon_4 \|\nabla\overline{u}^{k}\|^2,
\end{align}
where
\begin{align}
\widetilde{C}^{k}_8(t)=C\epsilon_4^{-1}\left(\|u^{k}\|^2_{L^{\infty}}+\|\nabla u^{k-1}\|^2_{L^3}+\|\psi^{k+1}\|^2_{L^{\infty}}
\right)+C(\varepsilon_4^*)^{-1}\|J^{k}\|^2_{L^{\infty}},\quad \varepsilon_4^*=\frac{b_1}{2}\nonumber
\end{align}
and
\begin{align}
C^{k}_9(t)=C(\varepsilon_3^*)^{-1}\|\phi^{k+1}+\phi^{k}\|^2_{L^{\infty}},\quad C^{k}_{10}(t)=C(\varepsilon_3^*)^{-1}\|\nabla u^{k-1}\|^2_{L^3},\quad C^{k}_{11}(t)=C(\varepsilon_3^*)^{-1}\|\nabla H^{k+1}\|^2_{L^{3}}. \nonumber
\end{align}
Therefore, applying (\ref{curl-div}) of Lemma \ref{lem-div-curl} to $\overline{u}^{k+1}$, and choosing $\varepsilon_3^*$ small enough in \eqref{u^{k+1}}, there exists a constant $b_3>0$ such that
\begin{align}\label{step5}
\frac{d}{dt}&\|\overline{u}^{k+1}\|^2+b_{3}\|\nabla\overline{u}^{k+1}\|^2
\nonumber\\[2mm]
\leq& C^{k}_{8}(t)\|\overline{u}^{k+1}\|^2+ C^{k}_9(t)\|\overline{\phi}^{k+1}\|^2+C^k_{10}(t)\|\overline{\psi}^{k+1}\|^2 +C^{k}_{11}(t)\|\overline{J}^{k+1}\|^2\nonumber\\[2mm]
&+\epsilon_4 \|\nabla\overline{u}^{k}\|^2+\varepsilon_4^*\|\nabla\overline{H}^{k+1}\|^2,
\end{align}
where
\begin{align}
\widetilde{C}^{k}_8(t)=C^{k}_8(t)+C.\nonumber
\end{align}

 Furthermore, we multiply $(\ref{inter1})_5$ by $-\nabla\mbox{div}\overline{u}^{k+1}$ and integrate the  resulting identity over $\Omega$ to derive
  \begin{align}\label{section5-1}
&- \int_{\Omega} \overline{u}^{k+1}_{t}\cdot \nabla\mbox{div}\overline{u}^{k+1}+(2\alpha+\beta)\|\nabla\mbox{div}\overline{u}^{k+1}\|^2-\alpha\int_{\Omega}\mbox{curl}^2\overline{ u}^{k+1}\cdot \nabla\mbox{div}\overline{u}^{k+1}\nonumber\\[2mm]
&=-\int_{\Omega}R_5\cdot \nabla\mbox{div}\overline{u}^{k+1}.
  \end{align}
   Notice that the boundary condition $\overline{u}^{k+1}\cdot n|_{\partial\Omega}=0$ implies
  \begin{align}\label{u_t}
-\int_{\Omega}\overline{u}_t^{k+1}\cdot \nabla\mbox{div} \overline{u}^{k+1}
=\frac{1}{2}\frac{d}{dt}\int_{\Omega}|\mbox{div}\overline{u}^{k+1}|^2.
\end{align}
Next, on the one hand, since $-K(x)\overline{u}^{k+1}\cdot n=(\mbox{curl}\overline{u}^{k+1}\times n)\cdot n=0$ on $\partial\Omega$, it is easy to see
 \begin{equation}
 K(x)\overline{u}^{k+1}=-(K(x)\overline{u}^{k+1}\times n)\times n\doteq (K(x)\overline{u}^{k+1})^{\perp}\times n \quad \mbox{on}\quad  \partial\Omega,\nonumber
 \end{equation}
which implies that
 \begin{align}\label{before}
\int_{\Omega}\mbox{curl}^2\overline{u}^{k+1}\cdot \nabla\mbox{div} \overline{u}^{k+1}
&=-\int_{\partial\Omega}(\mbox{curl}\overline{u}^{k+1}\times n)\cdot \nabla\mbox{div} \overline{u}^{k+1}\nonumber\\[2mm]
&=\int_{\partial\Omega} K(x)\overline{u}^{k+1}\cdot \nabla\mbox{div} \overline{u}^{k+1}\nonumber\\[2mm]
&=\int_{\partial\Omega} ((K(x)\overline{u}^{k+1})^{\perp}\times n)\cdot \nabla\mbox{div} \overline{u}^{k+1}\nonumber\\[2mm]
&=\int_{\partial\Omega} (\nabla\mbox{div} \overline{u}^{k+1}\times (K(x)\overline{u}^{k+1})^{\perp} )\cdot n \nonumber\\[2mm]
&=\int_{\Omega} \mbox{div}\big(\nabla\mbox{div} \overline{u}^{k+1}\times (K(x)\overline{u}^{k+1})^{\perp} \big) \nonumber\\[2mm]
&=-\int_{\Omega} \nabla\times (K(x)\overline{u}^{k+1})^{\perp} \cdot \nabla\mbox{div} \overline{u}^{k+1}\nonumber\\[2mm]
&\leq\varepsilon_5\|\nabla\mbox{div} \overline{u}^{k+1}\|^2 + C \|\overline{u}^{k+1}\|_1^2,
\end{align}
where we have used the identity $\mbox{div}(\nabla f\times v)=-\nabla\times v\cdot \nabla f$ in the last equality.
On the other hand, the right-hand side of \eqref{section5-1} can  be controlled as
\begin{align}\label{R5}
\int_{\Omega}&R_5\cdot \nabla\mbox{div} \overline{u}^{k+1}\nonumber\\[2mm]
=&\int_{\Omega}\left(-u^{k}\cdot\nabla \overline{u}^{k}
-\overline{u}^{k}\cdot\nabla u^{k-1}
\right)\cdot \nabla\mbox{div}\overline{u}^{k+1}
+\int_{\Omega}
\frac{A\gamma}{\gamma-1}\nabla\left((\phi^{k+1})^2-(\phi^{k})^2\right)\cdot\nabla\mbox{div}(\overline{u}^{k+1})\nonumber\\[2mm]
&+\int_{\Omega}\left(\psi^{k+1} Q(\overline{u}^{k})
+\overline{\psi}^{k+1}Q(u^{k-1})\right)\cdot\nabla\mbox{div} \overline{u}^{k+1}\nonumber\\[2mm]
&+\int_{\Omega}\left(\overline{J}^{k+1}\cdot\nabla H^{k+1}+J^{k}\cdot\nabla\overline{ H}^{k+1}-
\overline{J}^{k+1}\cdot(\nabla H^{k+1})^{T}-J^{k}\cdot(\nabla\overline{ H}^{k+1})^{T}\right)
\cdot \nabla\mbox{div}\overline{u}^{k+1}\nonumber\\[2mm]
\leq & \frac{1}{8}(2\alpha+\beta)\|\nabla\mbox{div}\overline{u}^{k+1}\|^2+C(
\|u^{k}\|^2_{L^{\infty}}\|\nabla \overline{u}^{k}\|^2_{L^2}+\|\overline{u}^{k}\|^2_{L^6}\|\nabla u^{k-1}\|^2_{L^3})\nonumber\\[2mm]
&+C(\|\nabla\overline{\phi} ^{k+1}\|^2 \|\phi^{k+1}+\phi^{k}\|^2_{L^{\infty}}+ \|\overline{\phi} ^{k+1}\|^2_{L^6} \|\nabla(\phi^{k+1}+\phi^{k})\|^2_{L^{3}}+\|\psi^{k+1}\|^2_{L^{\infty}} \|\nabla\overline{u}^{k}\|^2)\nonumber\\[2mm]
&+C(\|\overline{\psi}^{k+1}\|^2\|\nabla u^{k-1}\|^2_{L^{\infty}}+
 \|\overline{J}^{k+1}\|^2\|\nabla H^{k+1}\|^2_{L^{\infty}}+\|J^{k}\|^2_{L^{\infty}}\|\nabla\overline{ H}^{k+1}\|^2).
\end{align}
Putting \eqref{u_t}-\eqref{R5} into \eqref{section5-1}, and choosing $\varepsilon_5=\frac{1}{8\alpha}(2\alpha+\beta)$ in \eqref{before}, it leads to
\begin{align}\label{divu}
\frac{1}{2}&\frac{d}{dt}\|\mbox{div}\overline{u}^{k+1}\|^2
+\frac{6}{8}(2\alpha+\beta)\|\nabla\mbox{div}\overline{u}^{k+1}\|^2\nonumber\\[2mm]
 \leq&  C \|\overline{u}^{k+1}\|_1^2+C^k_{12}(t)\|\nabla\overline{\phi} ^{k+1}\|^2+
C^{k}_{13}(t)\|\overline{\psi}^{k+1}\|^2+C^{k}_{14}(t)\|\overline{J}^{k+1}\|^2 \nonumber \\[2mm]
&+ C\left(\|u^k\|^2_{L^{\infty}}+\|\nabla u^{k-1}\|^2_{L^3}+\|\psi^{k+1}\|^2_{L^{\infty}}\right)
\|\nabla\overline{u}^{k} \|^2+C\|J^{k}\|^2_{L^{\infty}}\|\nabla \overline{H}^{k+1}\|^2,
\end{align}
where
\begin{align*}
C^{k}_{12}(t)=\|\phi^{k+1}+\phi^{k}\|^2_{L^{\infty}}+\|\nabla\phi^{k+1}+\nabla\phi^{k}\|^2_{L^3},
\end{align*}
\begin{align*}
C^{k}_{13}(t)=\|\nabla u^{k-1}\|^2_{L^{\infty}},\quad C^{k}_{14}(t)=\|\nabla H^{k+1}\|^2_{L^{\infty}}.
\end{align*}

Similarly, multiplying $(\ref{inter1})_5$  by $\mbox{curl}^2\overline{u}^{k+1}$, integrating the result over $\Omega$, it holds
  \begin{align}
&\int_{\Omega} \overline{u}^{k+1}_{t}\cdot \mbox{curl}^2\overline{u}^{k+1}+\alpha\|\mbox{curl}^2\overline{ u}^{k+1}\|^2-(2\alpha+\beta)\int_{\Omega}\nabla\mbox{div}\overline{u}^{k+1}\cdot \mbox{curl}^2\overline{u}^{k+1}\nonumber\\[2mm]
&=\int_{\Omega}R_5\cdot \mbox{curl}^2\overline{u}^{k+1}.\nonumber
  \end{align}
 The first term on the left-hand side of the above equation can be expressed as
\begin{align}
\int_{\Omega}\overline{u}_t^{k+1}\cdot \mbox{curl}^2 \overline{u}^{k+1}&=\int_{\Omega}\mbox{curl}\overline{u}_t^{k+1}\cdot \mbox{curl} \overline{u}^{k+1}-\int_{\partial\Omega}\overline{u}_t^{k+1}\cdot (\mbox{curl} \overline{u}^{k+1}\times n)
\nonumber\\[2mm]
&=\frac{1}{2}\frac{d}{dt}\int_{\Omega}|\mbox{curl}\overline{u}^{k+1}|^2+\int_{\partial\Omega}\overline{u}_t^{k+1}\cdot K(x)\cdot \overline{u}^{k+1}\nonumber\\[2mm]
&=\frac{1}{2}\frac{d}{dt}\int_{\Omega}|\mbox{curl}\overline{u}^{k+1}|^2+\frac{1}{2}\frac{d}{dt}\int_{\partial\Omega}\overline{u}^{k+1}\cdot K(x)\cdot \overline{u}^{k+1}.\nonumber
\end{align}
Therefore, employing (\ref{before}) with $\varepsilon_5=\frac{1}{8}(2\alpha+\beta)$ produces
\begin{align}\label{curlu}
&\frac{1}{2}\frac{d}{dt}\|\mbox{curl}\overline{u}^{k+1}\|^2+\frac{1}{2}\frac{d}{dt}\int_{\partial\Omega}\overline{u}^{k+1}\cdot K(x)\cdot \overline{u}^{k+1} +\alpha \|\mbox{curl}^2\overline{u}^{k+1}\|^2\nonumber\\[2mm]
&\leq  \frac{1}{8}(2\alpha+\beta) \|\nabla\mbox{div} \overline{u}^{k+1}\|^2 + C \|\overline{u}^{k+1}\|_{1}^2
+\int_{\Omega} R_5\cdot\mbox{curl}^2\overline{u}^{k+1}.
\end{align}
We can estimate the term $\int_{\Omega}R_5\cdot\mbox{curl}^2\overline{u}^{k+1}$ by a similar argument as \eqref{R5}.

Therefore, combining \eqref{divu} with \eqref{curlu}, it indicates
\begin{align}
\frac{d}{dt}&\left(\|\mbox{div}\overline{u}^{k+1}\|^2+\|\mbox{curl}\overline{u}^{k+1}\|^2\right)+\frac{d}{dt}\int_{\partial\Omega}\overline{u}^{k+1}\cdot K(x)\cdot \overline{u}^{k+1} +C\left(\|\nabla\mbox{div}\overline{u}^{k+1}\|^2+\|\mbox{curl}^2\overline{u}^{k+1}\|^2\right)\nonumber\\[2mm]
\leq&  C \|\overline{u}^{k+1}\|_1^2+C^k_{12}(t)\|\nabla\overline{\phi} ^{k+1}\|^2+
C^{k}_{13}(t)\|\overline{\psi}^{k+1}\|^2+C^{k}_{14}(t)\|\overline{J}^{k+1}\|^2 \nonumber \\[2mm]
&+ C\left(\|u^k\|^2_{L^{\infty}}+\|\nabla u^{k-1}\|^2_{L^3}+\|\psi^{k+1}\|^2_{L^{\infty}}\right)
\|\nabla\overline{u}^{k} \|^2+C\|J^{k}\|^2_{L^{\infty}}\|\nabla \overline{H}^{k+1}\|^2,\nonumber
\end{align}
which implies that there exists a constant $b_4>0$ such that
\begin{align}\label{step6}
\frac{d}{dt}&\left(\|\mbox{div}\overline{u}^{k+1}\|^2+\|\mbox{curl}\overline{u}^{k+1}\|^2\right)+\frac{d}{dt}\int_{\partial\Omega}\overline{u}^{k+1}\cdot K(x)\cdot \overline{u}^{k+1} +b_4\|\nabla^2\overline{u}^{k+1}\|^2\nonumber\\[2mm]
\leq&  C \|\overline{u}^{k+1}\|_1^2+C^k_{12}(t)\|\nabla\overline{\phi} ^{k+1}\|^2+
C^{k}_{13}(t)\|\overline{\psi}^{k+1}\|^2+C^{k}_{14}(t)\|\overline{J}^{k+1}\|^2 \nonumber \\[2mm]
&+ C\left(\|u^k\|^2_{L^{\infty}}+\|\nabla u^{k-1}\|^2_{L^3}+\|\psi^{k+1}\|^2_{L^{\infty}}\right)
\|\nabla\overline{u}^{k} \|^2+C\|J^{k}\|^2_{L^{\infty}}\|\nabla \overline{H}^{k+1}\|^2,
\end{align}
 thanks to following elliptic estimate
		\begin{align}
	\|\nabla^2\overline u^{k+1}\|^2
	\leq C\left(\|\mbox{curl}^2 \overline u^{k+1}\|^2+\|\nabla\mbox{div}\overline u^{k+1}\|^2+\|\nabla\overline u^{k+1}\|^2+\|\overline u^{k+1}\|^2\right).\nonumber
	\end{align}

{\it  Step 6:} The full sequence $(\phi^{k},\psi^{k},H^{k},J^{k},u^{k})$ converges to a limit $(\phi,\psi,H,J,u)$ in some strong sense. Denote
\begin{align}
\Gamma ^{k+1}=& \|\overline{\phi}^{k+1}\|^2_1+\|\overline{\psi}^{k+1}\|^2
+C_1\|\overline{u}^{k+1}\|^2
+ \left(\|\mbox{div}\overline{u}^{k+1}\|^2+ \|\mbox{curl}\overline{u}^{k+1}\|^2\right)\nonumber\\[2mm]
&+C_2\|\overline{H}^{k+1}\|^2+\|\overline{J}^{k+1}\|^2,\nonumber
\end{align}
for some constants $C_1$ and $C_2$ ($C_1<C_2$) to be determined later.
Then it follows from inequalities \eqref{step1}, \eqref{step2}, \eqref{step3}, \eqref{step4}, \eqref{step5} and \eqref{step6} that
\begin{align}\label{finnally}
\frac{d}{dt}&\Gamma ^{k+1}+\frac{d}{dt}\int_{\partial\Omega}\overline{u}^{k+1}\cdot K(x)\cdot \overline{u}^{k+1}+\left(C_1b_3\|\nabla \overline{u}^{k+1}\|^2+\|\nabla^2 \overline{u}^{k+1}\|^2+\frac{C_2b_1}{2}\|\nabla \overline{H}^{k+1}\|^2
+b_2\|\nabla \overline{J}^{k+1}\|^2\right)\nonumber\\[2mm]
\leq& \mathcal{C}^{k}(t)\Gamma ^{k+1}+ 2\epsilon \|\nabla^2\overline{u}^{k}\|^2+(C_1+C_2+3)\epsilon \|\nabla \overline{u}^{k}\|^2\nonumber\\[2mm]
&+ C\left(\|u^k\|^2_{L^{\infty}}+\|\nabla u^{k-1}\|^2_{L^3}+\|\psi^{k+1}\|^2_{L^{\infty}}\right)
\|\nabla\overline{u}^{k} \|^2+C\|J^{k}\|^2_{L^{\infty}}\|\nabla \overline{H}^{k+1}\|^2\nonumber\\[2mm]
\leq& \mathcal{C}^{k}(t)\Gamma ^{k+1}+ 2\epsilon \|\nabla^2\overline{u}^{k}\|^2+(C_1+C_2+3)\epsilon \|\nabla \overline{u}^{k}\|^2+ C\|\nabla\overline{u}^{k} \|^2+C\|\nabla \overline{H}^{k+1}\|^2
\end{align}
where
	 $$\epsilon=\max\{\epsilon_1,\epsilon_2,\epsilon_3,\epsilon_4\},\quad\mathcal{C}^{k}(s)=C_1\mathcal{C}^{k}_{8}(t)+C_2\mathcal{C}^{k}_{4}(t)+\sum_{i=1,i\ne4,8}^{14}\mathcal{C}^{k}_{i}(t)>0.$$
and $C$ is independent of $\epsilon_i, (i=1,2,3,4)$, $C_1$ and $C_2$.
Let
\begin{align}
G^{k+1}= \|\overline{\phi}^{k+1}\|^2_1+\|\overline{\psi}^{k+1}\|^2
+C_1\|\overline{u}^{k+1}\|^2+\|\nabla\overline{u}^{k+1}\|^2
+C_2\|\overline{H}^{k+1}\|^2+\|\overline{J}^{k+1}\|^2.\nonumber
\end{align}
 Integrating (\ref{finnally}) over $s\in (0,t)$, applying  the trace theorem  with $C_1$ large enough, it yields that
\begin{align}
	 G^{k+1}&(t)+\int_0^t \left(C_1b_3\|\nabla \overline{u}^{k+1}\|^2+\|\nabla^2 \overline{u}^{k+1}\|^2+\frac{C_2b_1}{2}\|\nabla \overline{H}^{k+1}\|^2
	+b_2\|\nabla \overline{J}^{k+1}\|^2\right) ds\nonumber\\[2mm]
	\leq &\int_0^t\mathcal{C}^{k}G^{k+1}ds+\Big\{C\int_0^t\left(\epsilon \|\nabla^2\overline{u}^{k}\|^2+C_2\epsilon \|\nabla \overline{u}^{k}\|^2
		\right)ds\nonumber\\[2mm]
	&+C\int_0^t\left(
	\|\nabla\overline{u}^{k} \|^2+\|\nabla \overline{H}^{k+1}\|^2\right)ds+G^{k+1}(0)\Big\} \nonumber
\end{align} 	
where  	
 	\begin{align}
 		\int_{0}^{t}\mathcal{C}^{k}(s)ds\leq C_{*}+C_{*}\epsilon_*^{-1}t,\quad\quad\epsilon_*=\min\{\epsilon_1,\epsilon_2,\epsilon_3,\epsilon_4\} .\nonumber
 	\end{align}
 	
 Now we choose $C_2$ large enough to cancel the term $C\int_0^t\|\nabla \overline{H}^{k+1}\|^2ds$ on the right-hand sides of the above inequality, and then employ Gronwall's inequality to \eqref{finnally}, we have
\begin{align}
&G^{k+1}(s)+\int_0^t \left(C_1b_3\|\nabla \overline{u}^{k+1}\|^2+\|\nabla^2 \overline{u}^{k+1}\|^2+C_2b_1\|\nabla \overline{H}^{k+1}\|^2
+b_2\|\nabla \overline{J}^{k+1}\|^2\right) ds\nonumber\\[2mm]
&\leq \Big\{C\int_0^t \left(\epsilon \|\nabla^2\overline{u}^{k}\|^2+C_2\epsilon \|\nabla \overline{u}^{k}\|^2
\right)ds+C\int_0^t
\|\nabla\overline{u}^{k} \|^2ds+C\Big\} \cdot\exp\left(C_{*}+C_{*}\epsilon_*^{-1}t\right).\nonumber
  \end{align}
 We can firstly pick up  $\epsilon>0$ small  and then $T_{*}\in(0,\min(1,T^{**}))$ small enough such that
\begin{align}
&C\exp(C_{*})\leq \frac{C_1b_3}{8}, \quad C\epsilon\exp(C_{*})\leq  \frac{C_1b_3}{8};\nonumber\\
&C\epsilon\exp(C_{*})\leq \frac{1}{8}\min\{1,b_3\},\nonumber\\
& C\exp(C_{*}\epsilon_*^{-1}T_{*})\leq \frac{1}{8}\min\{1,b_3\}.\nonumber
\end{align}
Then it is easy to see that
\begin{align}
&\sum _{k=1}^{\infty}\sup_{0\leq t\leq T_*}\left(\|\overline{\phi}^{k+1}\|^2_1+\|\overline{\psi}^{k+1}\|^2+\|\overline{u}^{k+1}\|^2_1
+\|\overline{H}^{k+1}\|^2+\|\overline{J}^{k+1}\|^2\right)\nonumber\\[2mm]
&+\sum _{k=1}^{\infty}\int_0^{T_*} \left(\|\nabla \overline{u}^{k+1}\|^2+\|\nabla^2 \overline{u}^{k+1}\|^2+\|\nabla \overline{H}^{k+1}\|^2
+\|\nabla \overline{J}^{k+1}\|^2\right) ds\leq C.\nonumber
\end{align}
Moreover, the interpolation inequality is used to show that
\begin{align}
\lim_{k\rightarrow \infty}\|\nabla \overline{\psi}^{k+1}\|_{L^2}\leq C\lim_{k\rightarrow \infty}
\| \overline{\psi}^{k+1}\|^{\frac{1}{2}}_{L^2}\| \nabla^2\overline{\psi}^{k+1}\|^{\frac{1}{2}}_{L^2}\leq C\lim_{k\rightarrow \infty}
\| \overline{\psi}^{k+1}\|^{\frac{1}{2}}_{L^2} =0.\nonumber
\end{align}
Thus, the whole sequence $(\phi^{k},\psi^{k},H^{k},J^{k},u^{k})$ converges
to a limit $(\phi,\psi,H,J,u)$ in the following strong sense:
\begin{align}\label{solu}
&\phi^{k}\rightarrow \phi \quad \mbox{in}\quad L^{\infty}(0,T_*;H^1(\Omega)),\nonumber\\[2mm]
&\psi^{k}\rightarrow \psi \quad \mbox{in}\quad L^{\infty}(0,T_*;D^1(\Omega)),\nonumber\\[2mm]
&(H^{k},J^{k})\rightarrow (H,J) \quad \mbox{in}\quad L^{\infty}(0,T_*;L^2(\Omega))\cap L^{2}(0,T_*;D^1(\Omega)),\nonumber\\[2mm]
&u^{k}\rightarrow u \quad \mbox{in}\quad  L^{\infty}(0,T_*;H^1(\Omega))\cap L^{2}(0,T_*;D^1\cap D^2(\Omega)).
\end{align}
Thanks to (\ref{solu}), $(\phi, \psi, H, J,u )$ is a weak solution of
problem (\ref{J})-(\ref{initial-2}) in the distribution sense. Furthermore, due to the a priori estimate \eqref{local-linear}  and the lower-continuity of norm for weak or weak$^{*}$convergence, $(\phi, \psi, H, J,u )$ also satisfies the estimate \eqref{local-linear}.

{\it Step 7:} Finally, we shall prove the uniqueness of the strong solution to reformulated problem $(\ref{J})$-$(\ref{initial-2})$. Assume that $(\phi_1, \psi_1, H_1, J_1,u_1)$ and $(\phi_2, \psi_2, H_2, J_2,u_2)$ are two strong solutions to initial-boundary value problem $(\ref{J})$-$(\ref{initial-2})$, and satisfy the uniform estimates in \eqref{local-linear}. Define
$$	\overline\phi=\phi_1-\phi_2,\quad 	\overline\psi=\psi_1-\psi_2,\quad 	\overline{H}=H_1-H_2,\quad	 \overline{J}=J_1-J_2,\quad	\overline{u} =u_1-u_2.$$
According to the initial-boundary value problem $(\ref{J})$-$(\ref{initial-2})$, $(\overline\phi,\overline\psi,\overline H,\overline{J},\overline{u})$ should satisfy the following system
\begin{equation*}
	\begin{split}
		\left\{
		\begin{array}{lll}
			\overline{\phi}_t+u_{1}\cdot\nabla\overline{\phi}+\overline{u}\cdot\nabla \phi_2
			 +\frac{\gamma-1}{2}\left(\overline{\phi}\mbox{div}u_1+\phi_2\mbox{div}\overline{u}\right)=0,\\[4mm]
			 \partial_{t}\overline{\psi}_{j}+\partial_{j}u_{1i}\overline{\psi}_i+\partial_{j}\overline{u}_{i}\psi_{2i}
			+u_{1i}\partial_{j}\overline{\psi}_i+\overline{u}_i\partial_{j}\psi_{2i}
			+\partial_{j}\partial_{i}\overline{u}_i=0,\quad 1\leq i,j\leq 3,\\[4mm]
			\overline{H}_{t}- \eta \Delta\overline{ H}=-u_{1}\cdot\nabla\overline{H}-\overline{u}\cdot\nabla H_2
			+H_2\cdot\nabla \overline{u}+\overline{H}\cdot\nabla u_1
			-H_2\mbox{div}\overline{u}-\overline{H}\mbox{div}u_1,\\[2mm]
			\begin{split}
			\overline{J}_t-\eta\Delta \overline{J}=&-u_1\cdot\nabla \overline{J}-\overline{u}\cdot\nabla J_{2}
			+\overline{J}\cdot\nabla u_1+J_2\cdot\nabla\overline{u}
			+\eta J_2\mbox{div}\overline{\psi}+\eta \overline{J}\mbox{div}\psi_1\\[2mm]
		&+\eta J_2\left((\psi_1)^2-(\psi_2)^2\right)+\eta \overline{J}(\psi_1)^2+2\eta \overline{\psi}\cdot \nabla J_2+2\eta\psi_1\cdot\nabla \overline{J},\\[4mm]
			\end{split}\\
			\begin{split}
			\overline{u}_{t}&+\alpha\mbox{curl}^2\overline{ u}-(2\alpha+\beta)\nabla\mbox{div}\overline{u}\\[2mm]
			=&-u_1\cdot\nabla \overline{u}-\overline{u}\cdot\nabla u_2-\frac{A\gamma}{\gamma-1}\nabla\left((\phi_1)^2-(\phi_2)^2\right)
			+\psi_1 \cdot Q(\overline{u})+\overline{\psi}\cdot Q(u_2)\\[2mm]
			&+\overline{J}\cdot\nabla H_1+J_2\cdot\nabla\overline{ H}-
			\overline{J}\cdot(\nabla H_1)^{T}-J_2\cdot(\nabla\overline{H})^{T}.
			\end{split}
		\end{array}
		\right.
	\end{split}
\end{equation*}
Meanwhile,  $(\overline\phi,\overline\psi,\overline H,\overline{J},\overline{u})$ meets the following initial conditions
\begin{equation*}
	(\overline{\phi},\overline{\psi},\overline{H},\overline{J},\overline{u})|_{t=0}=(0,0,0,0,0)
\end{equation*}
and boundary conditions
\begin{equation*}
	\overline{u}\cdot n|_{\partial\Omega}=0,\quad \mbox{curl}\overline{u}\times n|_{\partial\Omega}=-K(x)\overline{u},
\end{equation*}
\begin{equation*}
	\overline{H}\cdot n|_{\partial\Omega}=0, \quad \mbox{curl}\overline{H}\times n|_{\partial\Omega}=0,
\end{equation*}
\begin{equation*}
	\overline{J}\cdot n|_{\partial\Omega}=0, \quad\mbox{curl}\overline{J}\times n|_{\partial\Omega}=-(\psi_1\cdot n)\overline{J}-(\overline{\psi}\cdot n)J_2.
\end{equation*}

Through the similar argument used in the above six steps, one can ascertain that
\begin{align}
	& G(t)+\int_0^t \left(C_1b_3\|\nabla \overline{u}\|^2+\|\nabla^2 \overline{u}\|^2+\frac{C_2b_1}{2}\|\nabla \overline{H}\|^2
	+b_2\|\nabla \overline{J}\|^2\right) ds\leq \int_0^t\mathcal{C}(s)G(s)ds\nonumber
\end{align} 	
where
\begin{align}
	G= \|\overline{\phi}\|^2_1+\|\overline{\psi}\|^2
	+C_1\|\overline{u}\|^2+\|\nabla\overline{u}\|^2
	+C_2\|\overline{H}\|^2+\|\overline{J}\|^2.\nonumber
\end{align}
Because for $0\leq t\leq T_*$,
\begin{equation*}
		\int_{0}^{t}\mathcal{C}(s)ds\leq C,
\end{equation*}
Utilizing the Gronwall's inequality, we can get the uniqueness follows from $\overline\phi=\overline\psi=\overline H=\overline{J}=\overline{u}=0$.

\qed

\subsection{Proof for Theorem \ref{thm1}}\label{3.6}

With the help of Theorem \ref{thm2}, now we are prepared to establish the local-in-time well-posedness of solution to the original  initial-boundary value problem (\ref{prob1})-(\ref{1.8}),  as shown in Theorem \ref{thm1}.
Let
\begin{align}
J^{*}=J-\frac{H}{\rho}. \nonumber
\end{align}
Then it follows from the equations (\ref{J})-(\ref{initial-2}) that
\begin{eqnarray}{\label{J^*}}
\left\{
\begin{array}{llll}
J^*_t+u\cdot\nabla J^*-\eta\Delta J^*=J^*\cdot\nabla u+\eta J^*\mbox{div}\psi+\eta J^*\psi^2+2\eta \psi\cdot \nabla J^*,\nonumber\\[2mm]
J^*|_{t=0}=0,\nonumber\\[2mm]
J^{*}\cdot n|_{\partial\Omega}=0,\quad \mbox{curl}J^{*}\times n|_{\partial\Omega}=0
\end{array}
\right.
\end{eqnarray}
which, together with a standard energy method, implies that
\begin{align}
J^{*}=0\quad \mbox{for}\quad (x,t)\in  \Omega\times[0,T_{*}],\nonumber
\end{align}
that is $J\equiv\frac{H}{\rho}$.
The rest of the proof analogies that in \cite{Zhu2015}, and we shall omit it here.\qed
\section{Appendix}
In this appendix, we provide the elliptic estimate for $J$ with the specific boundary conditions.
\begin{app}\label{prop-J}
Suppose $J$ satisfies
\begin{equation}\label{4.1}
\left\{
\begin{array}{lll}
-\Delta J=-J_t+F\quad \mbox{in}\quad \Omega\\[2mm]
J\cdot n=0,\quad \mbox{curl}J\times n=-(\psi\times J)\times n=-(\psi\cdot n)J\quad \mbox{on}\quad \partial\Omega.
\end{array}
\right.
\end{equation}
Then it holds
\begin{align}\label{2J}
\|\nabla^2J\|^2
\leq C\left(\|J_t\|^2+\|F\|^2+\| J\|_1^2+\|\psi\|^2_{L^{\infty}} \|J\|^2_1+\|\nabla\psi\|^2_{L^3}\|\nabla J\|^2\right),
\end{align}
and
\begin{align}\label{3J}
\|\nabla^3J\|^2
\leq C\left(\|\nabla J_t\|^2+\|\nabla F\|^2+\|J\|^2_{2}+(\|\psi\|^2_{L^{\infty}}+\|\nabla\psi\|^2_{1})\|J\|^2_{2}\right).
\end{align}
\end{app}
\begin{proof} {\it {Step 1: To estimate $\|\nabla^2J\|$. }}
Firstly, applying Lemma \ref{lem-div-curl}  with $J\cdot n|_{\partial\Omega}=0$, we have
\begin{align}\label{J-0}
\|\nabla^2J\|^2&\leq C\left( \|\mbox{div}J\|_1^2+\|\mbox{curl}J\|_1^2+\|J\|^2\right)\nonumber\\[2mm]
&\leq \|\nabla\mbox{div}J\|^2+\|\mbox{curl}^2J\|^2+|\mbox{curl}J\times n|^2_{H^{\frac{1}{2}}(\partial\Omega)}+\|\nabla J\|^2+\|J\|^2,
\end{align}
where the boundary integral can be controlled as
\begin{align}\label{J-0-1}
|\mbox{curl}J\times n|_{H^{\frac{1}{2}}(\partial\Omega)}&=|\psi\cdot n J|_{H^{\frac{1}{2}}(\partial\Omega)}\nonumber\\[2mm]
&\leq C\|\psi\cdot n J\|_{1}\nonumber\\[2mm]
&\leq C\left(\|\psi\|^2_{L^{\infty}} \|J\|^2_1+\|\nabla\psi\|^2_{L^3}\|\nabla J\|^2\right).
\end{align}
Next, one can rewrite the equation $(\ref{4.1})_1$ as
\begin{align}\label{equation-J}
\mbox{curl}^2J-\nabla\mbox{div}J=-J_t+F.
\end{align}
Multiplying  \eqref{equation-J} by $\nabla\mbox{div}J$, it holds
\begin{align}\label{J-1}
\|\nabla\mbox{div}J\|^2=\int_{\Omega}\mbox{curl}^2J\cdot\nabla\mbox{div}J+\int_{\Omega}(J_t-F)\cdot\nabla\mbox{div}J.
\end{align}
Integrating by parts with the boundary condition $(\mbox{curl}J \times n)|_{\partial\Omega}=-(\psi\cdot n)J$,  it leads to
\begin{align}\label{J-3}
\int_{\Omega}\mbox{curl}^2J\cdot\nabla\mbox{div}J&=-\int_{\partial\Omega}(\mbox{curl}J\times n)\cdot\nabla\mbox{div}J\nonumber\\[2mm]
&\leq C\|\nabla(\psi\cdot nJ)\|\cdot \|\nabla\mbox{div}J\|\nonumber\\[2mm]
&\leq \varepsilon \|\nabla\mbox{div}J\|^2+C\left( \|\psi\|^2_{L^{\infty}}+ \|\nabla\psi\|^2_{L^3}\right)\|\nabla J\|^2,
\end{align}
where we have used  Proposition \ref{lem-boun} with $v=(\psi\cdot n)J$  and $f=\mbox{div}J$.
Therefore, putting  \eqref{J-3} into \eqref{J-1} and selecting $\varepsilon$ small enough, we can conclude that
\begin{align}\label{div}
\|\nabla\mbox{div}J\|^2\leq C\left( \|\psi\|^2_{L^{\infty}}+ \|\nabla\psi\|^2_{L^3}\right)\|\nabla J\|^2+C(\|J_t\|^2+\|F\|^2).
\end{align}
Moreover, multiplying \eqref{equation-J} by $\mbox{curl}^2J$,  and using \eqref{J-3} again, we have
\begin{align*}
\|\mbox{curl}^2J\|^2&=\int_{\Omega}\mbox{curl}^2J\cdot\nabla\mbox{div}J
+\int_{\Omega}(-J_t+F)\cdot\mbox{curl}^2J\nonumber\\[2mm]
&\leq \epsilon \|\nabla\mbox{div}J\|^2+C\left( \|\psi\|^2_{L^{\infty}}+ \|\nabla\psi\|^2_{L^3}\right)\|\nabla J\|^2+\frac{1}{4}\|\mbox{curl}^2J\|^2+C(\|J_t\|^2+\|F\|^2),
\end{align*}
which yields
\begin{align}\label{curl}
\|\mbox{curl}^2J\|^2
&\leq \epsilon \|\nabla\mbox{div}J\|^2+C\left( \|\psi\|^2_{L^{\infty}}+ \|\nabla\psi\|^2_{L^3}\right)\|\nabla J\|^2+C(\|J_t\|^2+\|F\|^2).
\end{align}
Adding (\ref{div}) and (\ref{curl}) together, and choosing $\epsilon$ small enough yields that
\begin{align}
\|\mbox{curl}^2J\|^2+\|\nabla\mbox{div}J\|^2 \leq C\left( \|\psi\|^2_{L^{\infty}}+ \|\nabla\psi\|^2_{L^3}\right)\|\nabla J\|^2+C\left(\|J_t\|^2+\|F\|^2\right),\nonumber
\end{align}
which combined with (\ref{J-0}) and  (\ref{J-0-1})  immediately implies the desired estimate \eqref{2J}.\\

 {\it {Step 2: To estimate $\|\nabla^3J\|$. }}
Applying Lemma \ref{lem-div-curl} twice, one obtians
\begin{align}\label{0}
&\|\nabla^3J\|\leq C\left(\|\mbox{div}J\|_{H^2}+\|\mbox{curl}J\|_{H^2}+\|J\|\right)\nonumber\\[2mm]
&\leq C\left(\|\nabla^2\mbox{div}J\|+\|\mbox{curl}^3J\|+\|J\|_{2}+|\mbox{curl}J\times n|_{H^{\frac{3}{2}}(\partial\Omega)}+|\mbox{curl}^2J\cdot n|_{H^{\frac{1}{2}}(\partial\Omega)}\right).
\end{align}
By applying the boundary condition on $\mbox{curl}J$ directly, it holds
\begin{align}\label{b1}
|\mbox{curl}J\times n|_{H^{\frac{3}{2}}(\partial\Omega)}&=|\psi\cdot n J|_{H^{\frac{3}{2}}(\partial\Omega)}
\leq C\|\psi\cdot J\|_{2}\nonumber\\[2mm]
&\leq C(\|\psi\|_{L^{\infty}}+\|\nabla\psi\|_{1})\|J\|_{2}.
\end{align}
Remark that the boundary condition $\mbox{curl}J\times n=-(\psi\times J)\times n$ implies $$\mbox{curl}(\mbox{curl}J+\psi\times J)\cdot n=0 \quad \mbox{on}\quad  \partial\Omega,$$ so
\begin{equation*}
\mbox{curl}^2J\cdot n=-\mbox{curl}(\psi\times J)\cdot n\quad \mbox{on}\quad \partial\Omega.
\end{equation*}
Therefore, it gives
\begin{align}\label{b2}
|\mbox{curl}^2J\cdot n|_{H^{\frac{1}{2}}(\partial\Omega)}&=|\mbox{curl}(\psi\times J)\cdot n|_{H^{\frac{1}{2}}(\partial\Omega)}
\leq C\|\nabla(\psi\cdot J)\|_{1}\nonumber\\[2mm]
&\leq C(\|\psi\|_{L^{\infty}}+\|\nabla\psi\|_{1})\|J\|_{2}.
\end{align}
Putting \eqref{b1} and \eqref{b2} into \eqref{0} yields
\begin{equation}\label{3J-0}
\|\nabla^3J\|^2\leq C\left(\|\nabla^2\mbox{div}J\|^2+\|\mbox{curl}^3J\|^2\right)+C(1+\|\psi\|^2_{L^{\infty}}+\|\nabla\psi\|^2_{1})\|J\|^2_{2}.
\end{equation}

Next, taking the operator $\mbox{curl}$ to equation \eqref{equation-J}, and then multiplying  the resulted equation  by $\mbox{curl}^3 J$, one can obtain
\begin{align*}
\|\mbox{curl}^3 J \|^2 = \int_{\Omega} (-\mbox{curl}J_t+\mbox{curl}F) \cdot \mbox{curl}^3 J
  \leq \frac{1}{2}\|\mbox{curl}^3 J \|^2 + C (\|\nabla J_t\|^2+\|\nabla F\|^2),
\end{align*}
which implies that
\begin{align}\label{curl-3}
\|\mbox{curl}^3 J \|^2  \leq C (\|\nabla J_t\|^2+\|\nabla F\|^2).
\end{align}
Furthermore, computing  $\int_{\Omega}\nabla_{i} (\ref{equation-J})^j\cdot \nabla_{ij}\mbox{div}J$, we have
\begin{align}\label{3J-1}
\|\nabla^2\mbox{div} J \|^2& = \int_{\Omega} \nabla\mbox{curl}^2J\cdot\nabla^2\mbox{div}J+\int_{\Omega} (\nabla J_t-\nabla F)\cdot\nabla^2\mbox{div}J\nonumber\\[2mm]
&\leq \frac{1}{2}\|\nabla^2\mbox{div} J \|^2+C\|\nabla \mbox{curl}^2 J \|^2+\|\nabla J_t\|^2+\|\nabla F\|^2.
\end{align}
Applying Lemma \ref{lem-div-curl}  to $\mbox{curl}^2 $ and recalling \eqref{b2}, we acquire
\begin{align}
\| \nabla\mbox{curl}^2J\|^2&\leq C(\|\mbox{curl}^3J\|^2+|\mbox{curl}^2J\cdot n|^2_{H^{\frac{1}{2}}(\partial\Omega)}+\|\mbox{curl}^2J\|^2)\nonumber\\[2mm]
&\leq  C(\|\mbox{curl}^3J\|^2+\|J\|^2_{2}).\nonumber
\end{align}
Taking  the above inequality back  into (\ref{3J-1}), it holds
\begin{align}\label{div-3}
\|\nabla^2\mbox{div} J \|^2\leq C\|\mbox{curl}^3 J \|^2+\|J\|_{2}^2+\|\nabla J_t\|^2+\|\nabla F\|^2.
\end{align}
Then, by adding \eqref{curl-3} and \eqref{div-3}, and making use of \eqref{3J-0}, the desired estimate \eqref{3J} has been obtained. The proof is completed.

\end{proof}

\centerline{\bf Acknowledgements}
Liu's research is supported by National Natural Science Foundation of China (No. 12471198, No.
12431018). Luo's research is supported by a grant from the Research Grants Council of the Hong Kong Special Administrative Region, China (Project No. 11306621). Zhong is supported by National Natural Science Foundation of China (No. 12201520).

\end{document}